\makeatletter \@addtoreset{equation}{section}
\def\OL{\relax\ifmmode {\sf L}\else{\textsf L}\fi}
\def\OR{\relax\ifmmode {\sf R}\else{\textsf R}\fi}    
\newcommand{\mb}{\mathbb}
\newcommand{\mc}{\mathcal}
\newcommand{\eul}{\mathfrak}
\newcommand{\ze}{_{\scriptscriptstyle 0}}
\newcommand{\A}{\eul A}
\newcommand{\Ao}{{\eul A}_{\scriptscriptstyle 0}}
\newcommand{\Aone}{{\eul A}_{\scriptscriptstyle 1}}
\newcommand{\Atwo}{{\eul A}_{\scriptscriptstyle 2}}
\newcommand{\B}{\eul B}
\newcommand{\Bo}{{\eul B}_{\scriptscriptstyle 0}}
\newcommand{\Bone}{{\eul B}_{\scriptscriptstyle 1}}
\newcommand{\Btwo}{{\eul B}_{\scriptscriptstyle 2}}
\newcommand{\E}{\mathrm E}
\newcommand{\Eone}{\mathrm E_{\scriptscriptstyle1}}
\newcommand{\Etwo}{\mathrm E_{\scriptscriptstyle2}}
\newcommand{\Hone}{\H_{\scriptscriptstyle1}}
\newcommand{\Htwo}{\H_{\scriptscriptstyle2}}
\newcommand{\F}{\mathrm F}
\newcommand{\vp}{\varphi}
\newcommand{\Hil}{{\mc H}}
\newcommand{\D}{{\mc D}}
\def\x{\relax\ifmmode {\mbox{*}}\else*\fi}
\newcommand{\id}{e}
\newcommand{\ip}[2]{\langle{#1}|{#2}\rangle}
\newtheorem{defn}{Definition}[section]
\newtheorem{prop}[defn]{Proposition}
\newtheorem{thm}[defn]{Theorem}
\newtheorem{lemma}[defn]{Lemma}
\newtheorem{cor}[defn]{Corollary}
\theoremstyle{remark}
\newtheorem{rem}[defn]{Remark}
\newtheorem{example}[defn]{Example}
\newcommand{\bedefi}{\begin{defn}$\!\!${\bf }$\;$\rm }
\newcommand{\findefi}{ \end{defn}}
\newcommand{\ad}{^{\mbox{\scriptsize $\dag$}}}
\newcommand{\LDH}{{\mathcal L}\ad(\D,\Hil)}
\newcommand{\LDHpi}{{\mathcal L}\ad(\D_{\scriptscriptstyle\pi},\Hil_{\scriptscriptstyle\pi})}
\newcommand{\LDHpione}{{\mathcal L}\ad(\D_{\scriptscriptstyle\pi_1},\Hil_{\scriptscriptstyle\pi_1})}
\newcommand{\LDHpitwo}{{\mathcal L}\ad(\D_{\scriptscriptstyle\pi_2},\Hil_{\scriptscriptstyle\pi_2})}
\newcommand{\LDHpit}{{\mathcal L}\ad(\D_{\scriptscriptstyle\pi_1}\otimes\D_{{\scriptscriptstyle\pi_2}},\Hil_{\scriptscriptstyle\pi_1}\widehat{\otimes}_{\scriptscriptstyle h}\Hil_{\scriptscriptstyle\pi_2})}
\newcommand{\LDHpicl}{{\mathcal L}\ad(\widetilde{\D}_{\scriptscriptstyle\pi},\Hil_{\scriptscriptstyle\pi})}
\newcommand{\LpD}{{\mathcal L}\ad(\D)}
\newcommand{\QA}{{\mathcal Q}_{\Ao}(\A)}
\newcommand{\SSA}{{\mathcal S}_{\Ao}(\A)}
\newcommand{\N}{\mathbb N}
\newcommand{\R}{\mathbb R}
\newcommand{\betheo}{\begin{thm}}
\newcommand{\entheo}{\end{thm}}
\newcommand{\becor}{\begin{coroll}}
\newcommand{\encor}{\end{coroll}}
\newcommand{\belem}{\begin{lemma}}
\newcommand{\enlem}{\end{lemma}}
\newcommand{\beprop}{\begin{prop}}
\newcommand{\enprop}{\end{prop}}
\newcommand{\berem}{\begin{rem}$\!\!${\bf }$\;$\rm }
\newcommand{\beex}{\begin{example}$\!\!${\bf }$\;$\rm }
\newcommand{\enex}{ \end{example}}
\newcommand{\enrem}{ \end{rem}}
\newcommand{\rep}{{\mc R}(\A,\Ao)}
\newcommand{\crep}{{\mc R}_c(\A,\Ao)}
\def\H{{\mathcal H}}
\newcommand{\wmult}{\mbox{\raisebox{1pt}{$\scriptscriptstyle{
\square}$}}}
\begin{document}
\title[Tensor products]{Tensor products of normed and Banach quasi *-algebras}
\author{Maria Stella Adamo}
\address{Dipartimento di Matematica, Universit\`a di Roma ``Tor Vergata", I-00133 Roma, Italy}  \email{adamo@axp.uniroma2.it; msadamo@unict.it}

\author{Maria Fragoulopoulou}
\address{Department of Mathematics, University of Athens,
	Panepistimiopolis, Athens 15784, Greece}
\email{fragoulop@math.uoa.gr}

%%%%%%%%%%%%%%%%%%%%%%%%%%%%%%%%%%
\begin{abstract} Quasi *-algebras form an essential class of partial *-algebras, which are algebras of unbounded operators. In this work, we aim to construct tensor products of normed, respectively Banach quasi *-algebras, and study their capacity to preserve some important properties of their tensor factors, like for instance, *-semisimplicity and full representability.
\end{abstract}
%%%%%%%%%%%%%%%%%%%%%%%%%%%%%%%%%%%%%%%%%%%%%%%%%%%%%%%%%%%%%%%%%%%%%%%%%%%%%%%%%%%%%%%%%%%%%%%%%%%%%%%%

%\date{\today}
\maketitle
%%%%%%%%%%%%%%%%%%%%%%%%%%%%%%%%%%%%%%%%%%%%%%%%%%%%%%%%%%%%%%%%%%%%%%%%%%%%%%%%%%%
\footnotetext{Keywords and phrases: 
	Normed and Banach quasi *-algebra, representable linear functional, sesquilinear form, *-semisimplicity, full representability, tensor product normed and Banach quasi *-algebra.}
	 \footnotetext{Mathematics Subject
	Classification (2010): 46A32, 46K10, 47L60, 47L90.}
%%%%%%%%%%%%%%%%%%%%%%%%%%%%%%%%%%%%%%%%%%%%%%%%%%%%%%%%%%%%%%

\section{Introduction}	

Topological quasi *-algebras appeared in the literature at the beginning of the '80s, last century. They were introduced, in 1981, by G. Lassner \cite{las,las1}, to encounter solutions of certain problems in quantum statistics and quantum dynamics. But only later (see \cite[p.~90]{kschm}), the initial definition was reformulated in the right way, having thus included many more interesting examples.

Quasi *-algebras came in light, in 1988; see \cite{trap}, as well as literature in \cite{adatra, batr}. Many results have been published on this topic, which can be found in the treatise \cite{ait}, where the reader will also find a corresponding rich literature for partial *-algebras, whose a special subclass is given by quasi *-algebras. Note that partial *-algebras are algebras of unbounded operators (for an extended exhibition of the latter, see \cite{kschm}). The simplest example of a quasi (resp. partial) *-algebra is the completion of a locally convex *-algebra with separately continuous multiplication. It is clear then that in this case, multiplication is not everywhere defined. Completions of the previous kind may, for instance, occur in quantum statistics. Applications of quasi *-algebras can be found, e.g., in \cite{trafra, tra}.

Partial *-algebras were introduced by J-P. Antoine and W. Karwowski in \cite{ankar, ankar1} and, as we mentioned above, they are algebras of unbounded operators, playing an essential role in quantum field theory (see \cite{ait}).

In the present paper{\color{blue},} an effort is made to investigate topological tensor products of normed, respectively Banach quasi *-algebras. The motivation, apart from the preceding discussion, is assisting from the fact that tensor products are used to describe two quantum systems as one joint system (see, for instance, \cite{adau} and \cite{lp}), while the physical significance of tensor products always depends on the applications, which may involve wave functions, spin states, oscillators and even more; in this aspect, see e.g., \cite{bo, gusi}.

In the literature, one can find very few articles dealing with tensor products of unbounded operator algebras, the oldest one, to our knowledge, dating from 1997 (see \cite{hei}) and dealing with tensor products of unbounded operator algebras with Fr\'echet domains. Another two appeared in 2014 (see  \cite{fiw, fiw1}) and concern tensor products of generalized $B^*$-algebras, respectively tensor products of generalized $W^*$-algebras. Both kinds of these algebras are unbounded generalizations of standard $C^*$-, respectively $W^*$-algebras, initiated by G.R. Allan (1967, \cite{allan}) and A. Inoue (1978, \cite{ino}), respectively. The latter author used generalized $W^*$-algebras for developing a Tomita Takesaki theory in algebras of unbounded operators (1998). For this theory{\color{blue},} the reader is referred to \cite{ino1}.

The structure of the present paper is as follows: in Section 2, we exhibit the background material needed for our study.

The structure of a {(normed, resp. Banach)} quasi *-algebra $(\A, \Ao)$ (where $\A$ is a vector space and $\Ao$ a *-algebra and a subspace of $\A$, both of them satisfying specific properties) leads to the examination of the best possible {(topological)} tensor product of two {(normed, resp. Banach)} quasi *-algebras.

In Section 3, we construct the algebraic tensor product of quasi *-algebras. We were led to our construction mainly from the fact that the new object we wanted to have as a  quasi *-algebra should be a complex linear space containing a *-algebra with certain properties. When we are given two quasi *-algebras  $(\A, \Ao)$,  $(\B, \Bo)$, for obtaining $\A\otimes \B$ as a quasi *-algebra over $\Ao\otimes\Bo$, we consider the latter to be the algebraic tensor product *-algebra canonically contained in $\A\otimes\B$, and then we define the left and right multiplications between elements of $\Ao\otimes\Bo$ and $\A\otimes\B$.

Section 4 gives the construction of a tensor product normed, respectively Banach quasi *-algebra, coming from two given normed, respectively Banach quasi *-algebras.

In Section 5, examples of tensor product Banach quasi *-algebras are presented.

In the final Section 6, we discuss full representability and existence of *-represent\-ations on a tensor product normed quasi *-algebra. Since *-semisimplicity is related to both of the preceding concepts, information is also given for this notion, in the tensor product environment. More precisely, the mentioned concepts are studied in the capacity of passing from the considered tensor product to its factors and vice versa (see, e.g., {Propositions \ref{6.1}, \ref{cont_repr} and Theorems \ref{SS}}, \ref{pr_6.8}, \ref{pr_6.9}).

%%%%%%%%%%%%%%%%%%%%%%%%%%%%%%%%%%%%%%%%%%%%%%%%%%%%%
\section{Notation and background material}

All algebras and vector spaces we deal with in this article are over the field $\mathbb{C}$ of complexes. Moreover, all topological spaces are considered to be Hausdorff. Our basic definitions and notation concerning quasi *-algebras are mainly from \cite{ait}.

In the present section, we exhibit the necessary machinery, terminology and notation we need throughout this work.
\bigskip

\centerline{{\bf PART I:} QUASI *-ALGEBRAS}
\smallskip

\bedefi \cite[Definition 2.1.9]{ait} \label{2.1}  A {\em quasi *-algebra} $(\A, \Ao)$ is a pair consisting of a vector space $\A$ and a *-algebra $\Ao$ contained in $\A$ as a subspace and such that
\begin{itemize}
\item[(i)] the left multiplication $ax$ and the right multiplication $xa$ of an element $a\in\A$ and $x\in\Ao$ are always defined and bilinear;
\item[(ii)] $(xa)y = x(ay)$ and $a(xy)= (ax)y$, for each $x,y\in\Ao$ and $a\in\A$;
\item[(iii)] an involution $\ast$ is defined in $\A$, which extends the involution of $\Ao$ and has the property $(ax)^*=x^*a^*$ and $(xa)^*=a^*x^*$, for all $a \in \A$ and $x \in \Ao$.
\end{itemize}
\findefi
For a quasi *-algebra $(\A, \Ao)$, we shall also use the term {\em quasi *-algebra over $\Ao$}.
\medskip

$\blacktriangleright$ Given a quasi *-algebra  $(\A, \Ao)$, the elements of $ \Ao$ will always be denoted by $x,y,\ldots$, and the elements of $\A$ by $a,b,\ldots$.
\medskip

We say that a quasi *-algebra $(\A, \Ao)$ has a {\em unit},
 if there is a unique element $e$ in $\Ao$, such that
$ae=a=ea$, for all $a \in \A$.
\begin{example}\label{ex2.2}Let $I=[0,1]$ be the unit interval and $\lambda$ the Lebesgue measure on $I$. Then, for $1\leq p <\infty$, the pair $(L^p(I,\lambda), L^\infty(I,\lambda))$ is a quasi *-algebra with respect to the usual operations, i.e., the multiplication is defined pointwise and the involution is given by the complex conjugate.
\end{example}
$\blacktriangleright$ From now on, {\em writing}  $L^p(I)$, $p\geq1$, {\em we shall} always {\em mean that} $I$ {\em is endowed with the Lesbesque measure}, say $\lambda$, except if otherwise is specified.
\bedefi \label{2.3}  A quasi *-algebra $(\A,\Ao)$ is called a {\em normed quasi *-algebra} \index{normed quasi *-algebra} if $\A$ is a normed space under a norm $\|\cdot\|$ satisfying the following conditions:
\begin{itemize}
	\item[(i)]$\|a^*\|=\|a\|, \quad \forall \ a \in \A$;
	\item[(ii)] $\Ao$ is dense in $\A[\|\cdot\|]$;
	\item[(iii)]for every $x \in \Ao$, the map $R_x: a \in \A[\|\cdot\|] \to ax \in \A[\|\cdot\|]$ is continuous.
\end{itemize}
When $\A[\| \cdot \|] $ is a Banach space, we say that $(\A[\|\cdot\|],\Ao)$ is a {\em Banach quasi *-algebra}.
\findefi
The continuity of the involution implies that
\begin{itemize}
	\item[(iv)]for every $x \in \Ao$, the map $L_x: a \in \A[\|\cdot\|] \to xa \in \A[\|\cdot\|]$ is also continuous.
\end{itemize}

It is evident from the above that if $(\A, \Ao)$ has an identity element $e$, then
\begin{itemize}\item[(a)] if $ax=0$, respectively $xa=0$, for every $x \in \Ao$, then $a=0$;
	\item[(b)] if $ax=0$,  respectively $xa=0$, for every $a \in \A$, then $x=0$.
\end{itemize}

A norm is defined on $\Ao$ as follows:
$$\|x\|\ze := \max\{\|L_x\|, \|R_x\|\}, \ x \in \Ao,$$ with $\|L_x\|, \ \|R_x\|$, the usual operator norms (see \cite[beginning of Chapter 3]{trafra}). Then, $\Ao[\|\cdot\|\ze]$ is a normed *-algebra and
\begin{align} \label{norm_0}
\|ax\| \leq \|x\|\ze\,\|a\|,\quad \forall \ a \in \A, \ x \in \Ao.
\end{align}
Observe that if $(\A[\|\cdot\|], \Ao)$ has an identity $e$ then, without loss of generality, we may suppose that $\|e\|= 1$, since taking the equivalent to $\|\cdot\|$ norm $\|\cdot\|'$ on $\A$ defined by $\|a\|':= \|a\|/\|e\|$, $a \in \A$, we obviously have $\|e\|' = 1$. Furthermore,
note that the norms $\|\cdot\|, \ \|\cdot\|\ze$ are not comparable on $\Ao$, in general. For instance, consider the Banach quasi *-algebra without unit $(L^p(\R), C_c^0(\R))$, where $C_c^0(\R)$ stands for the *-algebra of continuous functions on $\R$ with compact support. Then the norms $\|\cdot\|_p$, $\|\cdot\|\ze = \|\cdot\|_\infty$ clearly are not comparable on $C_c^0(\R)$. But if a normed quasi *-algebra $(\A[\|\cdot\|], \Ao)$ has a unit, then \eqref{norm_0} implies that $\|x\| \leq\|x\|\ze$, for every $x \in \Ao$.
\medskip

Other examples of Banach quasi *-algebras can be found, for instance, in \cite{batr1, btt, trafra}. In particular, we have
\begin{example} \label{lpci}Consider the unit interval $I = [0,1]$, the $L^p$-space $L^p(I)$ with $1 \leq p<\infty$ and the C*-algebra $\mathcal{C}(I)$ of all continuous functions on $I$. Then
	 the pair $(L^p(I),\mathcal{C}(I))$ is a Banach quasi *-algebra.
\end{example}

\begin{example}\label{lplinf} The pair $(L^p(I),L^{\infty}(I))$ considered in Example \ref{ex2.2} is another example of Banach quasi *-algebra.
\end{example}
\smallskip

On the other hand, among Banach quasi *-algebras, an essential role is played by the completion of a Hilbert algebra with respect to the norm induced by the given inner product. In what follows we first define a Hilbert algebra and then a Hilbert quasi *-algebra.
\bedefi \label{hilal}
{\em A Hilbert algebra}  (see \cite[Section 11.7]{pal}) is a *-algebra
	$\Ao$, which is also a pre-Hilbert space with inner product $\ip{\cdot}{\cdot}$, such that
	\begin{itemize}
		\item[(i)]for every $x \in \Ao$, the map $y\mapsto xy$ is continuous, with respect to the norm defined by the inner product;
		\item[(ii)] $\ip{xy}{z}=\ip{y}{x^*z}$, for all $x,y,z \in \Ao;$
		\item[(iii)] $\ip{x}{y}=\ip{y^*}{x^*}$,  for all $x,y \in \Ao$;
		\item[(iv)] $\Ao^2$ is total in $\Ao$.
	\end{itemize}
\findefi
From (ii) and (iii) it follows that
\begin{align*}
\ip{xy}{z}=\ip{x}{zy^*}, \quad \forall \ x,y,z \in \Ao.
\end{align*}
\bedefi \label{hial} Let $\Ao$ be as in Definition \ref{hilal} and let $\H$ denote the Hilbert space completion of $\Ao$, with respect to the norm $\|\cdot\|$ given by the inner product. The involution of $\Ao$ extends to the whole of $\H$, since by (iii) it is isometric. The multiplication $\xi x$ (or $x\xi$) of an element $\xi$ in $\H$ with an element $x$ in $\Ao$ is defined by the usual limit procedure. To avoid trivial instances, we assume that
$$ \xi \in \H, \ \text{ such that } \ \xi x=0, \quad \forall \ x \in \Ao, \ \text{ implies } \ \xi=0.$$
Under the preceding operations, the pair $(\H[\|\cdot\|], \Ao)$ is now a Banach quasi *-algebra, that we call {\em Hilbert quasi *-algebra}.
\findefi
Let $\H$ be a Hilbert space with inner product $\ip{\cdot}{\cdot}$ and let $\D$ be a dense linear subspace of $\Hil$. We denote by $\LDH$ the set of all closable operators $T$ in $\Hil$, such that the domain of $T$ is $\D$ and the domain of its adjoint $T^*$, denoted by $\D(T^*)$, contains $\D$, i.e.,
$$\LDH=\left\{T:\D\to\Hil: \mathcal{D}(T^{\ast})\supseteq\D\right\}.$$
The set $\LDH$ is a $\mathbb{C}-$vector space with the usual sum $T+S$ and scalar multiplication $\lambda T$, for all $T,S\in\LDH$ and $\lambda\in\mathbb{C}$. Define the following involution $\dagger$ and partial multiplication $\wmult$ by
$$T\mapsto T^{\dagger}\equiv T^{\ast}{\upharpoonleft_{\D}}\quad \text{and} \quad T\wmult S=(T^{\dagger})^*S.$$
It is clear that the partial multiplication $\wmult$ is defined whenever
\begin{align} \label{2.4}
S\D \subseteq \mathcal{D}((T^\dagger)^*) \ \text{ and } \ T^{\dagger} \D \subseteq \mathcal{D}(S^*).
\end{align}
Then $\LDH$ becomes a {\em partial *-algebra}, in the sense of \cite[Definition 2.1.1]{ait}.
In $\LDH$ several topologies can be introduced (see, \cite{ait}). Here, we will use the {\em weak} and {\em strong* topology} denoted by ${\tau_w}$, $\tau_{s^*}$  respectively, which are defined by the families of seminorms
$$ p_{\xi,\eta} (T):= |\ip{T\xi}{\eta}|,\ \ \xi, \eta \in \D, \ \ T \in \LDH,$$
$$p_\xi^*(T) := \max\{\|T\xi\|, \|T^\dagger\xi\|\}, \quad
\xi \in \D, \ \ T \in \LDH.$$
We denote by $\LpD$ the subset of the elements $T$ in $\LDH$, such that $T\D\subseteq \D$ and $T^\dagger D\subseteq \D$. Then, $\LpD$ is a *-algebra with respect to the involution $^\dagger$ and the weak multiplication  $\wmult$ defined above. It is clear that the inclusions in \eqref{2.4} are always valid in $\LpD$.
\bedefi \label{26}
A {\em *-representation} $\pi$, of a quasi *-algebra $(\A,\Ao)$ in a Hilbert space $\H_{\pi}$, is a linear map $\pi$ from $\A$ in $\LDHpi$, where $\D_{\scriptscriptstyle\pi}$ is a dense subspace of $\Hil_{\scriptscriptstyle\pi}$ and, at the same time, the following conditions hold:
\begin{itemize}
	\item[(i)] $\pi(a^{\ast})=\pi(a)^{\dagger}$, for all $a\in\A$;
	\item[(ii)] if $a\in\A$ and $x\in\Ao$, then $\pi(a)$ is a left multiplier of $\pi(x)$ and $\pi(a)\wmult\pi(x)=\pi(ax)$.
	\end{itemize}
{Concerning (ii), note that for $a \in \A$ and $x \in \Ao$, one also has that $\pi(a)$ is a right multiplier of $\pi(x)$ and $\pi(x)\wmult\pi(a)=\pi(xa)$.}
\findefi
A *-representation $\pi$, as before, is {\em faithful} if $a\neq0$ implies $\pi(a)\neq0$ and it is {\em cyclic} if $\pi(\Ao)\xi$ is dense in $\Hil_{\scriptscriptstyle\pi}$, for some $\xi\in\D_{\scriptscriptstyle\pi}$.
If $(\A,\Ao)$ has an identity element $e$, we suppose that $\pi(e)=I_{\D_{{\scriptscriptstyle \pi}}}$, the latter denoting the identity operator from $\H_\pi$ on $\H_\pi$, restricted on $\D_{\scriptscriptstyle \pi}$.
\medskip

 The {\em closure $\widetilde{\pi}$} of a *-representation $\pi$ of a quasi *-algebra $(\A,\Ao)$ in $\LDHpi$ is defined as follows
$$\widetilde{\pi}:\A\to\LDHpicl : a\mapsto\overline{\pi(a)}{\upharpoonleft_{\widetilde{\D}_{\scriptscriptstyle \pi}}},$$
where $\widetilde{\D}_{\scriptscriptstyle\pi}$ is the completion of $\D_{\scriptscriptstyle \pi}$, with respect to the graph topology, defined by the seminorms $$\eta\in\D_{\scriptscriptstyle \pi}\mapsto\left\|\pi(a)\eta\right\|, \quad \forall \ a\in\A.$$

{A *-representation $\pi$ is said to be {\em closed} if $\pi=\widetilde{\pi}$.}
\bedefi \label{27}
Let $(\A, \Ao)$ be a quasi *-algebra.  A linear functional $\omega$ on $\A$ is said to be {\em representable} if it satisfies the following
conditions:
\begin{itemize}
	\item[(L.1)]$\omega(x^*x) \geq 0, \quad \forall \ x \in \Ao$;
	\item[(L.2)]$\omega(y^*a^* x)= \overline{\omega(x^*ay)}, \quad
	\forall \ x,y \in \Ao, \ a \in \A$;
	\item[(L.3)]  for all $a \in \A$, there exists $\gamma_a >0$, such that $$|\omega(a^*x)| \leq \gamma_a \omega(x^*x)^{1/2}, \quad \forall \ x
	\in \Ao.$$
\end{itemize}
The set of all {\em representable linear functionals} is denoted by $\rep$.
\findefi
Given a quasi *-algebra $(\A,\Ao)$, we denote by $\QA$ the set of all sesquilinear forms on $\A\times\A$, such that (see \cite[Definition 2.1]{tra1})
\begin{itemize}
	\item[(i)] $\varphi(a,a)\geq0$, for every $a\in\A$.
	\item[(ii)] $\varphi(ax,y)=\varphi(x,a^{\ast}y)$, for every $a\in\A$ and $x,y\in\Ao$.
\end{itemize}
\smallskip

If $(\A[\|\cdot\|], \Ao)$ is a normed quasi *-algebra, denote by  $\crep$ the {\em subset} of $\rep$ consisting {\em of all continuous representable linear functionals on $(\A, \Ao)$}.
	
As shown in \cite[Proposition 2.7]{ftt} if $\omega\in \crep$ for a given normed quasi *-algebra $(\A[\|\cdot\|],\Ao)$, then the sesquilinear form $\vp_\omega$ defined on $\Ao \times \Ao$ by
\begin{align} \label{fiom}
\varphi_\omega(x,y) := \omega(y^*x), \quad \forall \ x,y \in \Ao,
\end{align}
 is {\em closable}; that is, for a sequence $\{x_n\}$ in $\Ao$, one has that
  $$\|x_n\| \rightarrow 0 \ \text{and} \  \vp_\omega(x_n - x_m, x_n - x_m) \to 0 \ \text{implies} \ \vp_\omega(x_n, x_n) \to 0.$$
In this case, $\vp_\omega$  has a closed extension $\overline{\vp}_\omega$ to a dense domain {$\D(\overline{\vp}_\omega)\times \D(\overline{\vp}_\omega)$ containing $\Ao \times \Ao$, where}
\begin{align*}
\D(\overline{\vp}_\omega) = \{&a \in \A : \exists \, \{x_n\} \subset \Ao : \text{ with } x_n \underset{\|\cdot\|}\rightarrow a\\
&\text{ and } \vp_\omega(x_n-x_m, x_n-x_m) \rightarrow 0\},
\end{align*}
{so that if $(a,a') \in \D(\overline{\vp}_\omega)\times \D(\overline{\vp}_\omega)$, we put
\begin{align} \label{overfi}	 
\overline{\vp}_\omega(a,a') := \lim_n {\vp}_\omega(x_n, x'_n).
\end{align}
In this regard, having a normed quasi *-algebra $(\A[\|\cdot\|], \Ao)$ and $\D(\varphi)$ a dense subspace of $\A[\|\cdot\|]$, we shall say that a sesquilinear form $\varphi : \D(\varphi)\times \D(\varphi) \to \mathbb C$ is {\em closed} \cite[Definition 53.12]{driv}, if whenever $\{v_n\}_{n=1}^\infty\subset\D(\varphi)$ is a sequence, such that $v_n\to v$ in $\A[\|\cdot\|]$ and
	$$\varphi(v_n-v_m,v_n-v_m)\to 0,\quad\text{as}\quad n,m\to\infty$$
one has $v\in\D(\varphi)$ and $\lim_{n\to\infty}\varphi(v-v_n,v-v_n)=0$.}
\smallskip

{Coming back to $\overline{\varphi}_\omega$}, note that in \cite[Proposition 3.6]{adatra} is proved that {\em in every Banach quasi *-algebra one has that} $\D(\overline{\varphi}_\omega) = \A$.

Consider now the set
\begin{align} \label{ar}
\A_{\mc R}:= \bigcap_{\omega \in {\mc R}_c(\A,\Ao)} \D(\overline{\varphi}_\omega).
\end{align}
If ${\mc R}_c(\A,\Ao)=\{0\}$, we put $\A_{\mc R}=\A$.
Note that, if for every $\omega \in {\mc R}_c(\A,\Ao)$, $\vp_\omega$ is jointly
continuous with respect to the norm $\|\cdot\|$ of $\A$, we obtain $\A_{\mc R}=\A$. In this regard, see also \cite[Proposition 3.6]{adatra}.

Furthermore, we put
$$\Ao^+:=\left\{\sum_{k=1}^n x_k^* x_k, \, x_k \in \Ao,\, n \in {\mb N}\right\}.$$
Then $\Ao^+$ is a wedge in $\Ao$ and we call the elements of $\Ao^+$ \emph{positive elements of} $\Ao$.
As in \cite[beginning of Section 3]{ftt}, we call \emph{positive elements of} $\A$ the elements of $\overline{\Ao^+}^{\|\cdot\|}$. We set $\A^+:=\overline{\Ao^+}^{\|\cdot\|}$ and for an element $a \in \A^+$ we shall write $a \geq 0$.

A linear functional $\omega$ on $\A$ is \textit{positive} if $\omega(a)\geq0$ for every $a\in\A^+$.

\bedefi \cite[Definition 3.7]{ftt} \label{suf} A family of positive linear functionals $\mc F$ on
a normed quasi *-algebra $(\A[\|\cdot\|], \Ao)$ is called {\em sufficient}, if for every $a \in \A^+$, $a \neq 0$, there exists $\omega \in {\mc F}$, such that $\omega (a)>0$.
\findefi \label{def29}

\bedefi\label{fully_rep}  \cite[Definition 4.1]{ftt} A normed quasi *-algebra $(\A[\|\cdot\|], \Ao)$ is called {\em fully representable} if ${\mc R}_c(\A,\Ao)$ is sufficient and $\A_{\mc R}=\A$.
\findefi
It is clear from the discussion before \eqref{ar} that {\em every Banach quasi *-algebra is fully representable if ${\mc R}_c(\A,\Ao)$ is sufficient}. In fact, by \cite[Theorem 3.9]{adatra}, sufficiency of
${\mc R}_c(\A,\Ao)$ is a necessary and sufficient condition, in such a way that a Banach quasi *-algebra is fully representable.

Observe that {\em a Hilbert quasi *-algebra}, by its very definition,  is *-semisimple (cf. Definition \ref{def1}), therefore by \cite[Theorem 3.9]{adatra} {\em is fully representable}.
\medskip

Further examples of fully representable topological quasi *-algebras can be found in \cite[Section 4]{ftt}.

\berem \label{rem_217} {Let $(\A[\|\cdot\|],\Ao)$ be a normed quasi *-algebra and $\omega \in \mathcal{R}_c(\A,\Ao)$. For $x \in \Ao$ define $\omega_x(a):=\omega(x^*ax)$, for every $a\in\A$. Then $
\omega_x \in \mathcal{R}_c(\A,\Ao)$. Note that the condition of sufficiency required in Definition \ref{fully_rep} together with the following condition
\begin{equation*} a\in\A\;\text{and}\;\omega_x(a)\geq 0, \text{ for all}\;\omega\in\mathcal{R}_c(\A,\Ao)\;\text{and}\;x\in\Ao, \;\text{implies} \ a\geq0,
\end{equation*}
{\em says that},  if $a\in\A$, with $\omega(a)=0$, for every $\omega\in\crep$, then $a=0$. }
\enrem
Denote by $\SSA$ the subset of $\QA$ consisting of all continuous sesquilinear forms $\Omega:\A\times\A\to\mathbb{C}$, such that
$$ |\Omega(a,b)|\leq \|a\|\|b\|, \quad \forall \ a, b\in \A.$$
Defining $$\|\Omega\|:=\displaystyle\sup_{\|a\|=\|b\|=1}\left|\Omega(a,b)\right|,$$ one obviously has $\|\Omega\| \leq 1$, for every $\Omega \in \SSA$.
\bedefi\label{def1}
A normed quasi *-algebra $(\mathfrak{A}[\|\cdot\|],\Ao)$ is called {\em *-semi\-simple} if, for every $0\neq a\in\A$, there exists $\Omega\in\mathcal{S}_{\Ao}(\A)$, such that $\Omega(a,a)>0$.
\findefi
Note that taking an element $\omega \in \mathcal{R}(\A,\Ao)$,  one may associate to $\omega$ two sesquilinear forms. One is already defined by \eqref{fiom} and the second one is given as follows:
\begin{align} \label{Omega}
\Omega^\omega (a,b):=\ip{\pi_\omega(a)\xi_\omega}{\pi_\omega(b)\xi_\omega}, \quad a,b \in \A,
\end{align}
where
$ \Omega^\omega (a,e) = \omega(a),$ for every $a \in \A;$ $\xi_\omega$ is the cyclic vector of the GNS representation $\pi_\omega$ associated to $\omega$ (see \cite[Theorem 3.5]{tra1}).
\bigskip

\centerline{{\bf PART II:} TENSOR PRODUCTS}
\medskip

For algebraic tensor products the reader is referred to \cite{lcha, hun}; for topological tensor products to \cite{deflo, fra, gro, mal, scha, tak}.
\medskip

Suppose that $\Ao, \Bo$ are *-algebras and $(x, y) \in \Ao \times \Bo$. The element $x \otimes y$ is called {\em elementary tensor} of the vector space tensor product $\Ao \otimes \Bo$. An arbitrary element $z$ in $\Ao \otimes \Bo$ has the form $z=\sum^{n}_{i=1}x_i \otimes y_i$. Let $z' =\sum^{m}_{j=1}x_j \otimes y_j$ be another arbitrary element in $\Ao\otimes \Bo$; set
\begin{align*}
zz' := \sum^{n}_{i=1} \sum^{m}_{j=1}x_i x'_j \otimes y_i y'_j.
\end{align*}
Then, $zz'$ is a well defined (associative) product on $\Ao\otimes \Bo$, under which $\Ao\otimes \Bo$ becomes a complex algebra (see, for instance, \cite[p.~361, Lemma 1.4]{mal}, \cite[pp.188,189]{mur}).

Using the involutions of $\Ao, \Bo$, an involution  is defined on $\Ao\otimes \Bo$, in a natural way:
\begin{align} \label{invo}
 \Ao\otimes \Bo \ni z=\sum^{n}_{i=1}x_i \otimes y_i
\mapsto z^* := \sum^{n}_{i=1} x^{*}_{i} \otimes y^{*}_{i} \in  \Ao\otimes \Bo.
\end{align}
Thus,  $\Ao\otimes \Bo$ becomes a *-algebra. If $\Aone ,\Bone$ are
$*$-subalgebras of $\Ao,\Bo$ respectively, we may obviously
regard $\Aone \otimes \Bone$ as a $*$-subalgebra of $\Ao\otimes \Bo$
\medskip

Instead of the *-algebras $\Ao$ and $\Bo$, consider now two locally convex spaces $\E[\tau_{\scriptscriptstyle\E}]$, $\F[\tau_{\scriptscriptstyle\F}]$ and let $\E\otimes \F$ be their vector space tensor product.
\bedefi \label{groth} \cite[pp. 88, 89]{gro}
A topology $\tau$ on $\E\otimes \F$ is called \emph{compatible} (with the tensor product vector space structure of $\E\otimes\F$) if the following conditions are satisfied:
\begin{itemize}
\item[(1)] The vector space $\E\otimes\F$ equipped with $\tau$ is a locally convex space, that will be denoted by $\E{\otimes}_{\scriptscriptstyle\tau}\F$.
\item[(2)] The tensor map $\Phi : \E\times \F \rightarrow \E{\otimes}_{\scriptscriptstyle\tau}\F : (x,y)\mapsto x\otimes y$ is separately continuous (that is, continuous in each variable).
\item[(3)] For any equicontinuous subset $M$ of $\E^*$ and $N$ of $\F^*$, the
set $M\otimes N\equiv \{ f \otimes g : f \in M, g\in N\}$ is an equicontinuous subset of $\big(\E{\otimes}_{\scriptscriptstyle\tau}\F\big)^*$; $\E^*$ and $\F^*$ denote the topological dual of $\E[\tau_{\scriptscriptstyle\E}]$ and $\F[\tau_{\scriptscriptstyle\F}]$ respectively.
\end{itemize}
\findefi
The completion of the locally convex space  $\E{\otimes}_{\scriptscriptstyle\tau}\F$ is denoted by $\E{\widehat{\otimes}_{\scriptscriptstyle\tau}}\F$. {For *-compatibility, see beginning of Section 6.}
\medskip

Let now $\Eone[\|\cdot\|_{\scriptscriptstyle1}]$, $\E_2[[\|\cdot\|_{\scriptscriptstyle2}]$ be Banach spaces. A norm $\|\cdot\|$ on the tensor product space $\Eone \otimes \Etwo$ that satisfies the equality
\begin{align}\label{28}
\|x_1 \otimes x_2\| = \|x_1\|_1 \|x_2\|_2, \quad  \forall \ x_1 \in \Eone, \ x_2 \in \Etwo,
\end{align}
is called a {\em cross-norm} on $\Eone \otimes \Etwo$.
\medskip

\centerline{\textbf{The injective cross-norm on $\Eone \otimes \Etwo$}}

\medskip
Taking an arbitrary element $z=\sum^{n}_{i=1}x_i \otimes y_i$  in $\Eone \otimes \Etwo$,  we put
\[
\begin{aligned}
\|z\|_\lambda := \sup\Big{\{}\Big{|} \sum^{n}_{i=1}f(x_i)g(y_i)\Big{|} :  &f \in \Eone^*, \|f\|\leq 1; \\ &g \in \Etwo^*, \|g\| \leq 1\Big{\}}.
\end{aligned}
\tag{2.8} \label{29}
\]
The quantity $\|\cdot\|_\lambda$ is a well-defined cross-norm on $\Eone \otimes \Etwo$, called {\em injective cross-norm}. It is also a {\em compatible} topology, fulfilling Definition \ref{groth}, {\em and} it is {\em the least cross-norm on} $\Eone \otimes \Etwo$.

The normed space induced by $(\Eone \otimes \Etwo) [\|\cdot\|_{\scriptscriptstyle\lambda}]$, will be denoted as $\Eone{\otimes}_\lambda \Etwo$; its respective completion, which is a Banach space, will be denoted by $\Eone\widehat{\otimes}_{\scriptscriptstyle\lambda} E_2$. Grothendieck's notation for the latter Banach space, used also by many authors, is $\Eone \widehat{\widehat{\otimes}}\Etwo$.
\medskip

When $\Aone[\tau_{\scriptscriptstyle1}], \Atwo[\tau_{\scriptscriptstyle 2}]$ are two locally convex *-algebras (in this case, we shall always assume that {\em involution is continuous} and {\em multiplication is jointly continuous}), then Definition \ref{groth} can be modified as follows
\bedefi \cite{fra} \label{loca}
Let $\Aone[\tau_{\scriptscriptstyle1}], \Atwo[\tau_{\scriptscriptstyle2}]$ be as before, where the topologies $\tau_{\scriptscriptstyle1}, \tau_{\scriptscriptstyle2}$, are defined by upwards directed families of seminorms, say $\{p\}$ and $\{q\}$, respectively. Let $\Aone\otimes \Atwo$ be their corresponding tensor product *-algebra. A topology $\tau$
on $\Aone\otimes \Atwo$ is called \emph{$*$-admissible} (that is, compatible with the tensor product *-algebra structure of $\Aone\otimes \Atwo$), if the following conditions are satisfied:
\begin{itemize}
\item[(1)] $\Aone\otimes \Atwo$ endowed with $\tau$ is a locally convex $*$-algebra, denoted by $\Aone{\otimes}_{\scriptscriptstyle\tau}\Atwo$;
\item[(2)] the tensor map $\Phi :\Aone[\tau_1] \times \Atwo[\tau_2] \rightarrow \Aone{\otimes}_{\scriptscriptstyle\tau}\Atwo$ is continuous, in the sense that if $\tau$ is determined by the family of $*$-seminorms $\{r\}$, then for every $r$
 there exist $p,q$, such that
\[
r(x\otimes y) \leq p(x)q(y), \quad \forall  \ (x,y) \in \Aone \times \Atwo;
\]
\item[(3)] for any equicontinuous subsets $M$ of $\Aone^*$ and $N$ of $\Atwo^*$, the set $M\otimes N=\{ f \otimes g : f\in M, g\in N\}$ is an equicontinuous subset of $\big(\Aone{\otimes}_{\scriptscriptstyle\tau}\Atwo\big)^*$; $\Aone^*$, $\Atwo^*$ denote respectively the dual of $\Aone$, $\Atwo$.
\end{itemize}
\findefi
The completion of $\Aone{\otimes}_{\scriptscriptstyle\tau}\Atwo$ is a complete locally convex *-algebra denoted by $\Aone\widehat{\otimes}_{\scriptscriptstyle\tau}\Atwo$.
\medskip

Let us now assume that $\Aone[\|\cdot\|_{\scriptscriptstyle1}]$, $\Atwo[\|\cdot\|_{\scriptscriptstyle2}]$ are normed *-algebras with isometric involution. We shall define the projective cross-norm on the tensor product *-algebra $\Aone \otimes \Atwo$  (see \cite[p.~189]{tak}).
\medskip

\centerline{\textbf{The projective cross-norm on $\Aone \otimes \Atwo$}}
\medskip

Let  $z=\sum^{n}_{i=1}x_i \otimes y_i$ be an arbitrary element
in $\Aone \otimes \Atwo$. Put
\setcounter{equation}{8}
\begin{align} \label{2.10}
\|z\|_{\scriptscriptstyle\gamma}=\inf\left\{\sum^{n}_{i=1}\|x_i\|_1 \|y_i\|_2 \right\},
\end{align}
where the infimum is taken over all representations $\sum^{n}_{i=1}x_i \otimes y_i$ of $z$. The quantity $\|\cdot\|_{\scriptscriptstyle\gamma}$ is a well-defined cross-norm that majorizes all other cross-norms on $\Aone \otimes \Atwo$; it is called {\em projective cross-norm}.
The normed *-algebra induced by $(\Aone \otimes \Atwo) [\|\cdot\|_{\scriptscriptstyle\gamma}]$, will be denoted as $\Aone{\otimes}_{\scriptscriptstyle\gamma} \Atwo$ and its respective completion, which is a Banach *-algebra, will be denoted by $\Aone\widehat{\otimes}_{\scriptscriptstyle\gamma}\Atwo$. Grothendieck's notation for the latter, used also by many authors, is $\Aone{\widehat{\otimes}}\Atwo$. Note that the cross-norm $\|\cdot\|_{\scriptscriptstyle\gamma}$ satisfies Definition \ref{loca}, therefore is a *-admissible (hence, compatible) cross-norm, whereas the injective cross-norm $\|\cdot\|_{\scriptscriptstyle\lambda}$ is not *-admissible in general.
\medskip

In particular, {\em any compatible cross-norm $\|\cdot\|$ on  $\Eone \otimes \Etwo$ lies between the injective and projective cross-norm}, i.e.,
\begin{align} \label{123}
\|\cdot\|_{\scriptscriptstyle\lambda} \ \leq \  \|\cdot\| \ \leq \ \|  \cdot\|_{\scriptscriptstyle\gamma}.
\end{align}
Even more, {\em a cross-norm $\|\cdot\|$ on $\Eone \otimes \Etwo$ is compatible, if and only if, the inequality \eqref{123} is valid}.
\smallskip

For the specific case, when the topology $\tau$ is generated by a cross-norm $\|\cdot\|$, the condition (2) in Definition \ref{loca} always holds by the cross-norm property. Condition (3) clearly implies that the tensor product of continuous linear functionals is bounded.

The situation is different for operators, i.e., the tensor product of continuous operators is bounded only for certain cross-norms, those that are \textit{uniform}. For further reading in this aspect, see \cite{ryan}.
\smallskip

More precisely, if $\Eone[\|\cdot\|_{\scriptscriptstyle1}]$, $\Etwo[\|\cdot\|_{\scriptscriptstyle2}]$ are Banach spaces, let  $\Eone\widehat{\otimes}_{\scriptscriptstyle\|\cdot\|}\Etwo$ be their respective tensor product Banach space, under a cross-norm $\|\cdot\|$. If $T_1:\Eone\to \Eone$, $T_2:\Etwo\to\Etwo$ are linear operators, then the map $T_1\otimes T_2:\Eone\otimes\Etwo\to\Eone\otimes\Etwo$ {is uniquely defined by the linearization of the bilinear map $(x,y)\in \Eone\otimes\Etwo \mapsto T_1(x)\otimes T_2(y) \in \Eone\otimes\Etwo$. Hence $T_1\otimes T_2$ is a linear operator, such that}
\small{$$(T_1\otimes T_2)\left(\sum_{i=1}^nx_i\otimes y_i\right):=\sum_{i=1}^nT_1(x_i)\otimes T_2(y_i),\quad \forall \ \sum_{i=1}^nx_i\otimes y_i\in\Eone\otimes\Etwo.$$}
If $T_1$ and $T_2$ are bounded operators, we would like $T_1\otimes T_2$ to be a bounded operator too. This is true if $\|\cdot\|$ is a \textit{uniform} cross-norm, in sense of the following
\bedefi \label{uni}
Let $\Eone\widehat{\otimes}_{\scriptscriptstyle\|\cdot\|} \Etwo$ and $T_1, \ T_2$ be exactly as before. If the tensor product operator $T_1{\otimes}T_2:\Eone{\otimes}_{\scriptscriptstyle\|\cdot\|} \Etwo\to \Eone{\otimes}_{\scriptscriptstyle\|\cdot\|} \Etwo$ is continuous and its extension $T_1\widehat{\otimes} T_2:\Eone\widehat{\otimes}_{\scriptscriptstyle\|\cdot\|} \Etwo\to \Eone\widehat{\otimes}_{\scriptscriptstyle\|\cdot\|} \Etwo$ satisfies the relation $\|T_1\widehat{\otimes} T_2\|\leq\|T_1\|\|T_2\|$, then the cross-norm $\|\cdot\|$ is said to be \textit{uniform}.
\findefi
\begin{rem}\label{unif_lambda}
	The injective and projective cross norms $\lambda$ and $\gamma$ are examples of uniform cross-norms.
\end{rem}
Observe that the condition $\|T_1\widehat{\otimes} T_2\|\leq\|T_1\|\|T_2\|$ automatically implies the equality.
Indeed, we have the following
\begin{prop}\label{equality}
	Under the hypotheses of Definition {\em \ref{uni}}, the operator $T_1\widehat{\otimes}T_2:\Eone\widehat{\otimes}_{\scriptscriptstyle\|\cdot\|} \Etwo\to \Eone\widehat{\otimes}_{\scriptscriptstyle\|\cdot\|} \Etwo$ verifies the equality $\|T_1\widehat{\otimes}T_2\|=\|T_1\|\|T_2\|$.
\end{prop}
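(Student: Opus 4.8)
The plan is to combine the inequality $\|T_1\widehat{\otimes} T_2\|\leq\|T_1\|\|T_2\|$, which holds by hypothesis since $\|\cdot\|$ is uniform, with the reverse inequality $\|T_1\widehat{\otimes} T_2\|\geq\|T_1\|\|T_2\|$, which I would establish by testing $T_1\widehat{\otimes}T_2$ on elementary tensors. If either $T_1$ or $T_2$ is the zero operator, both sides vanish and the equality is trivial, so I may assume $\|T_1\|,\|T_2\|>0$.

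For the lower bound, fix $\varepsilon>0$. By the definition of the operator norm I would choose unit vectors $x\in\Eone$ and $y\in\Etwo$ with $\|T_1 x\|_1>\|T_1\|-\varepsilon$ and $\|T_2 y\|_2>\|T_2\|-\varepsilon$. The elementary tensor $x\otimes y$ already lies in $\Eone\otimes\Etwo\subseteq\Eone\widehat{\otimes}_{\scriptscriptstyle\|\cdot\|}\Etwo$, and since $\|\cdot\|$ is a cross-norm, \eqref{28} gives $\|x\otimes y\|=\|x\|_1\|y\|_2=1$. By the defining formula for $T_1\otimes T_2$ on elementary tensors together with the same cross-norm identity, $\|(T_1\widehat{\otimes}T_2)(x\otimes y)\|=\|T_1 x\otimes T_2 y\|=\|T_1 x\|_1\|T_2 y\|_2>(\|T_1\|-\varepsilon)(\|T_2\|-\varepsilon)$. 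Hence $\|T_1\widehat{\otimes}T_2\|\geq\|(T_1\widehat{\otimes}T_2)(x\otimes y)\|>(\|T_1\|-\varepsilon)(\|T_2\|-\varepsilon)$, and letting $\varepsilon\to 0$ yields $\|T_1\widehat{\otimes}T_2\|\geq\|T_1\|\|T_2\|$.

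Together with the hypothesised upper bound, this gives the claimed equality. There is no serious obstacle here: the whole argument rests on applying the cross-norm property \eqref{28} twice, once to $x\otimes y$ and once to its image $T_1 x\otimes T_2 y$, which is precisely what makes elementary tensors the natural test vectors. The only mild point to observe is that $x\otimes y$ belongs to the algebraic tensor product, so the action of $T_1\widehat{\otimes}T_2$ on it is computed directly from the definition of $T_1\otimes T_2$, with no passage to the completion required.
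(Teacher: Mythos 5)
Your proof is correct and takes essentially the same approach as the paper's: both establish the reverse inequality $\|T_1\widehat{\otimes}T_2\|\geq\|T_1\|\|T_2\|$ by restricting attention to elementary tensors and applying the cross-norm identity \eqref{28} twice, once to $x\otimes y$ and once to $T_1x\otimes T_2y$. The paper writes this as a chain of suprema whereas you use an explicit $\varepsilon$-choice of near-maximizing unit vectors (plus the trivial zero-operator case), but the underlying argument is identical.
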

\begin{proof}
To prove the claim, we have to show that $\|T_1\widehat{\otimes}T_2\|\geq\|T_1\|\|T_2\|$. For $z \in \Eone\widehat{\otimes}_{\scriptscriptstyle\|\cdot\|} \Etwo$ and $x \in \Eone, y \in \Etwo$, what we have is
\begin{align*}
\|T_1\widehat{\otimes}T_2\|&=\sup_{\|z\|\leq1}\|T_1\widehat{\otimes}T_2(z)\|\geq\sup_{\|x\otimes y\|\leq1}\|(T_1\otimes T_2)(x\otimes y)\|\\
&=\sup_{\|x\|_{\scriptscriptstyle1}\|y\|_{\scriptscriptstyle2}\leq1}\|T_1(x)\|_{\scriptscriptstyle1}\|T_2(y)\|_{\scriptscriptstyle2}\geq\sup_{\|x\|_{\scriptscriptstyle1}\leq1,\|y\|_{\scriptscriptstyle2}\leq1}\|T_1(x)\|_{\scriptscriptstyle1}\|T_2(y)\|_{\scriptscriptstyle2}\\
&=\sup_{\|x\|_{\scriptscriptstyle1}\leq1}\|T_1(x)\|_{\scriptscriptstyle1}\sup_{\|y\|_{\scriptscriptstyle2}\leq1}\|T_2(y)\|_{\scriptscriptstyle2}=\|T_1\|\|T_2\|.
\end{align*}
This completes the proof.
\end{proof}

$\blacktriangleright$  Let $\Aone[\|\cdot\|_{\scriptscriptstyle1}]$,  $\Atwo[\|\cdot\|_{\scriptscriptstyle2}]$ and $\|\cdot\|$ be again as above. If $\Bone$ and $\Btwo$ are {\em subspaces} of $\Aone$ and $\Atwo$ respectively, then $\Bone\otimes\Btwo$ {\em is a subspace} of $\Aone\otimes\Atwo$ {\em and} in this paper, if not explicitly said, {\em it will be endowed with the topology induced by that of} $\Aone\otimes_{\scriptscriptstyle\|\cdot\|}\Atwo$.

%%%%%%%%%%%%%%%%%%%%%%%%%%%%%%%%%%%%%%%%%%%%%%%%%%%%%%%%%%%%%%

\section{Algebraic tensor product of quasi *-algebras}
\smallskip

Let $(\A, \Ao), (\B,\Bo)$ be given quasi *-algebras. It is then known that the algebraic tensor product $\Ao \otimes \Bo$ of the *-algebras $\Ao$, $\Bo$ is again a *-algebra (see Section 2, beginning of PART II) contained as a vector subspace in the vector space tensor product $\A \otimes \B$.

Since $\A$ and $\B$ carry an involution $a \mapsto a^*$, $a \in \A$, respectively $b \mapsto b^*$, $b \in \B$, extending those of $\Ao$, $\Bo$ respectively, then $\A \otimes \B$ attains an involution such that $a\otimes b \mapsto (a\otimes b)^* := a^* \otimes b^*$, $a \in \A$, $b \in \B$, extending the involution of $\Ao \otimes \Bo$ (see \eqref{invo}).
\smallskip

As stated in Definition \ref{2.1}, the vector space tensor product of $\A$ and $\B$ for the given above quasi *-algebras has to be endowed with the left and right multiplications by elements of a *-algebra contained in it, verifying certain properties. A natural candidate for this is $\Ao \otimes \Bo$ as a *-algebra and a subspace of $\A \otimes \B$.

Define now the following actions on $\A\otimes \B$, with respect to $\Ao \otimes \Bo$:
{\footnotesize
\begin{align*}
&(\A \otimes \B)\times (\Ao \otimes \Bo) \rightarrow \A \otimes \B,\quad(a \otimes b)\cdot (x\otimes y):= R_{x\otimes y}(a\otimes b)=(ax)\otimes (by),\\
&(\Ao \otimes \Bo)\times (\A \otimes \B) \rightarrow \A \otimes \B,\quad
(x\otimes y)\cdot (a \otimes b) :=L_{x\otimes y}(a\otimes b)=(xa)\otimes (yb),
\end{align*}
}
with $(x,y)$ in $\Ao \times \Bo$ and $(a,b)$ in $\A \times \B$.
By the universal property of the vector space tensor product, both actions are well defined and extend to bilinear maps, extending the multiplication of $\Ao \otimes \Bo$ (see \cite[p. 188, 189]{mur}, for similar arguments). Routine calculations show that using the laws of Definition \ref{2.1}(ii) for $\A$ and $\B$, we obtain the corresponding laws for $\A\otimes \B$.

If $e_{\scriptscriptstyle\A}, \ e_{\scriptscriptstyle\B}$ are the identities of our given quasi *-algebras respectively, then $e_{\scriptscriptstyle\A} \otimes e_{\scriptscriptstyle\B}$ is an identity element for $\Ao \otimes \Bo$, {i.e.}, (see discussion after Definition \ref{2.1})
$$(a\otimes b)\cdot (e_{\A}\otimes e_{\B}) = a \otimes b =
(e_{\A}\otimes e_{\B})\cdot (a\otimes b),$$
for all $a \in \A$ and $b \in \B$.

$\blacktriangleright$ {\em From now on we shall simply write}
$$ (x\otimes y)(a\otimes b) \ \text{instead of} \
(x\otimes y)\cdot(a\otimes b).$$
Similarly, of course, for $(a\otimes b)\cdot(x\otimes y)$.

Concerning the extension of the involution * of $\Ao \otimes \Bo$ on $\A \otimes \B$, we clearly have the property (iii) of Definition \ref{2.1} for the extension of the involutions of $\Ao$, $\Bo$ on $\A$, $\B$ respectively, i.e.,
$$\big((a\otimes b)(x \otimes y)\big)^* = (ax)^*\otimes (by)^* = x^*a^* \otimes y^*b^* = (x \otimes y)^*(a\otimes b)^*,$$
for all  $(x,y)$ in $\Ao \times \Bo$ and $(a,b)$ in $\A \times \B$.

In conclusion, all properties of Definition \ref{2.1} are fulfilled, therefore $\A\otimes \B$ {\em is a quasi*-algebra over}  $\Ao\otimes \Bo$.

\bedefi \label{atpr}
The algebraic tensor product $\A\otimes \B$ that was constructed above from two quasi *-algebras $(\A, \Ao)$, $(\B, \Bo)$, will be called {\em tensor product quasi *-algebra over} $\Ao\otimes \Bo$, or we shall just say that $(\A\otimes \B, \Ao\otimes \Bo)$ {\em is a tensor product quasi *-algebra}.
\findefi
	
%%%%%%%%%%%%%%%%%%%%%%%%%%%%%%%%%%%%%%%%%%%%%%%%%%%%

\section{Topological tensor products of normed and Banach quasi *-algebras}
Given two normed (resp. Banach) quasi *-algebras $(\A[\|\cdot\|_{\scriptscriptstyle\A}], \Ao), (\B[\|\cdot\|_{\scriptscriptstyle\B}],\Bo)$, we shall construct their tensor product norm\-ed (resp. Banach) quasi *-algebra.

We have already seen in the preceding Definition \ref{atpr} that $\A\otimes \B$ is a quasi *-algebra over $\Ao\otimes \Bo$. Hence, according to Definition \ref{2.3}, we still have to show that $\A\otimes \B$ becomes a normed (resp. Banach) space, under a suggesting tensor norm that fulfills the conditions of Definition \ref{2.3}.

First we consider on $\A\otimes\B$ the injective cross-norm \eqref{29} and as we have already said in Section 2,
 $\A \widehat{\otimes}_{\scriptscriptstyle\lambda}\B$ is the
Banach space, completion of the respective normed space $\A {\otimes}_{\scriptscriptstyle\lambda} \B \equiv (\A\otimes \B)[\|\cdot\|_{\scriptscriptstyle\lambda}]$.
\smallskip

$\blacktriangleright$ In the sequel, {\em for distinction}, we shall often {\em denote by} $\|\cdot\|_{\scriptscriptstyle\A}, \ \|\cdot\|_{\scriptscriptstyle\B}$, {\em the given norms on} $\A$, $\B$, respectively.
\smallskip

By Definition \ref{2.3}(i), the (extended) involution on
$\A$ and $\B$ from that of $\Ao$ and $\Bo$ respectively, is isometric, therefore by Remark \ref{unif_lambda} the map \eqref{invo} defines a continuous involution on $\A {\otimes}_{\scriptscriptstyle\lambda}\B$, which is continuously extended on $\A\widehat{\otimes}_{\scriptscriptstyle\lambda}\B$. Applying Proposition \ref{equality} for the operators $*_1  : \A \rightarrow \A$, with $*_1(a) = a^*$, for all $a \in \A$ and $*_2  : \B \rightarrow \B$, with $*_2(b) = b^*$, for all $b \in \B$, we obtain that $\|*_1\widehat{\otimes}*_2\|=\|*_1\|\|*_2\| = 1$. Using, in addition, the continuity of $*_1\widehat{\otimes}*_2$, we conclude that $$\|z^*\|_{\scriptscriptstyle\lambda}=\|z\|_{\scriptscriptstyle\lambda}, \ \forall \ z \in \A {\otimes}_{\scriptscriptstyle\lambda}\B.$$ Hence by continuity, we pass to limits, having thus that $\A\widehat{\otimes}_{\scriptscriptstyle\lambda}\B$ has an isometric involution.

 %{\color{blue}In particular,} for elementary tensors, since %{\color{blue}$\|\cdot\|{\scriptscriptstyle\lambda}$} is a %cross-norm, {\color{blue} we have
%$$\|(a\otimes %b)^*\|_{\scriptscriptstyle\lambda}=\|a\|_{\scriptscriptstyl%e\A} \, \|b\|_{\scriptscriptstyle\B}=\|a\otimes %b\|_{\scriptscriptstyle\lambda},$$}
%for all $a$ in $\A$ and $b$ in $\B$.

We prove now that $\Ao\otimes\Bo$ is dense in $\A {\otimes}_{\scriptscriptstyle\lambda} \B$ and $\A \widehat{\otimes}_{\scriptscriptstyle\lambda}\B$. Without loss of generality, take an elementary tensor $a\otimes b$ in  $\A {\otimes}_{\scriptscriptstyle\lambda} \B$.

By Definition \ref{2.3}, $\Ao$ is dense in $\A[\|\cdot\|_{\scriptscriptstyle\A}]$ and $\Bo$ in $\B[\|\cdot\|_{\scriptscriptstyle\B}]$. Thus, since $a$ is in $\A$ and $b$ in $\B$ there exist sequences $\{x_n\}$ in $\Ao$ and $\{y_n\}$ in $\Bo$, such that
$$\|x_n-a\|_{\scriptscriptstyle\A}\to 0\quad\text{and}\quad\|y_n-b\|_{\scriptscriptstyle\B}\to0.$$
Then the sequence $\{x_n\otimes y_n\}$ in $\Ao\otimes\Bo$ is $\|\cdot\|_{\scriptscriptstyle\lambda}$-converging to $a\otimes b$. Indeed, from \eqref{29}, we have
\begin{align*}
\|x_n\otimes y_n-x_m\otimes y_m\|_{\scriptscriptstyle\lambda}
=\sup\big\{&|f(x_n)g(y_n)-f(x_m)g(y_m)|:f\in\A^*, \\
&\|f\|\leq1,g\in\B^*,\|g\|\leq 1\big\}\to0,
\end{align*}
The above argument shows that  $\Ao\otimes \Bo$ is dense in $\A {\otimes}_{\scriptscriptstyle\lambda} \B$ and consequently also in $\A \widehat{\otimes}_{\scriptscriptstyle\lambda} \B$.

It remains to show that for every $z = \sum_{i \in F}  x_i \otimes y_i$ in $\Ao\otimes \Bo$, $F$ a finite subset in $\N$, the (right) multiplication operator
\begin{align} \label{rmul}
R_z : \A {\otimes}_{\scriptscriptstyle\lambda} \B \rightarrow  \A {\otimes}_{\scriptscriptstyle\lambda} \B : c \mapsto cz
\end{align}
is continuous.

First recall that for $x \in \Ao$ and $y \in \Bo$ the operators $R_x : \A \rightarrow \A, \ R_y : \B \rightarrow \B$ with $R_x(a) := ax$ and $R_y(b):=by$, $a\in \A, b \in \B$ are continuous and the operator $R_x \otimes R_y$ is uniquely defined on $\A {\otimes}_{\scriptscriptstyle\lambda} \B$ into itself, such that
$$(R_x \otimes R_y) (a\otimes b)= R_x(a)\otimes R_y(b), \ a\in\A, b \in \B.$$

In particular, $R_x \otimes R_y$ is continuous since $R_x, R_y$ are continuous and $\|\cdot\|_\lambda$ is a uniform
cross-norm (see Remark \ref{unif_lambda}). Yet, it is easily seen that defining the map $R_{x\otimes y} : \A {\otimes}_{\scriptscriptstyle\lambda} \B \rightarrow \A {\otimes}_{\scriptscriptstyle\lambda} \B$ as
$$R_{x\otimes y}(a\otimes b):= (a\otimes b)(x\otimes y), \ a \in \A, b\in \B,$$
we obviously obtain that $R_{x\otimes y} = R_x \otimes R_y$, so that the operator $R_{x\otimes y}$ is also continuous.

In conclusion, $R_z$ in \eqref{rmul} is well-defined and
continuous, such that
$$R_z = \sum_{i \in F} R_{x_i \otimes y_i} = \sum_{i \in F}
R_{x_i} \otimes R_{y_i.}$$
By linearity and continuity we infer that $R_z$, $z$ in $\Ao\otimes \Bo$, {\em is uniquely extended to a continuous operator on $\A \widehat{\otimes}_{\scriptscriptstyle\lambda}\B$} too. Hence, we conclude that left and right multiplications by elements in $\Ao\otimes\Bo$, as well as the involution *, are well defined on the completion $\A\widehat{\otimes}_{\scriptscriptstyle\lambda}\B$ and verify all the required properties.
\smallskip

Summing up, according to Definition \ref{2.3}, we have that
\smallskip

$\bullet$ \, $(\A {\otimes}_\lambda \B, \Ao\otimes \Bo)$ {\em is a tensor product normed quasi *-algebra and
\smallskip

$\bullet$ \, $(\A {\widehat{\otimes}_{\scriptscriptstyle\lambda}}\B, \Ao\otimes \Bo)$ is a tensor product Banach quasi *-algebra}.
\smallskip

All the above arguments, as well as the fixed notation, {\em can be} equally well {\em applied for the projective cross-norm} $\|\cdot\|_{\scriptscriptstyle\gamma}$  (see \eqref{2.10}), to give that
{\em the pair $(\A {\otimes}_{\scriptscriptstyle\gamma}\B, \Ao\otimes \Bo)$ is a normed quasi *-algebra} and its `completion', {\em i.e., the pair $(\A {\widehat{\otimes}_{\scriptscriptstyle\gamma}}\B, \Ao\otimes \Bo)$ is a Banach quasi *-algebra}.

The same is true {\em for any uniform cross-norm on}
$\A\otimes\B$ by Definition \ref{uni} and Proposition \ref{equality}.

\section{Examples of tensor product Banach quasi *-algebras}

\begin{example} \label{exa1}
Take $I$ to be the unit interval in the real line and
consider the Banach quasi *-algebra $(L^1(I), \mathcal{C}(I))$ {(see discussion before Definition \ref{2.3}, as well as Example \ref{lpci})}. Consider the tensor product of
$(L^1(I), \mathcal{C}(I))$ with itself. Then we obtain the Banach quasi *-algebra
$$(L^1 (I)\widehat{\otimes}_{\scriptscriptstyle\gamma} L^1(I), \mathcal{C}(I)\otimes \mathcal{C}(I))= (L^1(I\times I), \mathcal{C}(I)\otimes \mathcal{C}(I))$$
(see discussion at the end of Section 4).

It is known that the tensor product $L^1(I) \widehat{\otimes}_{\scriptscriptstyle\gamma} L^1(I)$ is
linearly and topologically isomorphic to $L^1(I\times I)\equiv L^1(I\times I, \lambda\times \lambda)$ {(with $\lambda$ the Lebesque measure on $I$ and $\lambda \times \lambda$ the product measure on $I\times I$)}; see \cite{gro}.
\end{example}
\begin{example} \label{exa2}
Recently A.Ya. Helemskii \cite{hel3} working in the context of {\em ${\bf L}-$quantiz\-ations} with ${\bf L} \equiv L^p(Z, \zeta)$, $1< p<\infty$, $(Z, \zeta)$ a ``convenient" measure space (i.e., $Z$ has either no atoms or an infinite set of atoms) and {\em $p$-convex tensor products of the spaces} $L^q(\cdot)$, $q \in (1, \infty)$ (ibid., Section 6), showed a general result (ibid., Theorem 6.4) and gave  the Banach version of it, from which one obtains the following:
\smallskip

 Let $X, Y$ be measure spaces with countable bases, $p \in (1, \infty)$ and $q = p/(p - 1)$ the conjugate number of $p$. Then,
 \begin{align} \label{Lp}
 L^q(X)\widehat\otimes_{\scriptscriptstyle{p{\bf L}}} L^q(Y) = L^q(X\times Y),
 \end{align}
with respect to a well-defined {\bf L}-isometric isomorphism \cite[Remark 6.5]{hel3}, such that $f \otimes g \mapsto h$, with $h(s,t):= f(s)g(t)$, $f \in L^q(X), \; g \in L^q(Y)$,  $h \in L^q(X\times Y)$ and $(s,t) \in X\times Y$. The notation $\widehat\otimes_{p{\bf L}}$ means completion with respect to the norm $\|\cdot\|_{p{\bf L}}$; here, the index, $p \in (1, \infty)$ comes from the base space ${\bf L} \equiv L^p(Z, \zeta)$, involved in the ${\bf L}$-quantization theory of \cite{hel3}, where a consequence of the latter, \eqref{Lp} is. For the definition of $\|\cdot\|_{p{\bf L}}$, see \cite[(5.3)]{hel3}, where (the {\bf L}-norm) $\|\cdot\|_{p{\bf L}}$ is, in a sense, an analogue of the projective norm $\|\cdot\|_{\scriptscriptstyle\gamma}$.
\smallskip

For the term {\em countable base}, see \cite[p.~195]{tay}. For some `similar' results to the preceding one, see also \cite{chev, deflo, tay}. Note that the preceding term is equivalent with the fact that
the measure $\mu$ on $X$ is {\em separable} \cite[Vol. II, p.~132, 7.14(iv)]{bog}.
\smallskip

{Consider} the Banach quasi *-algebras {$(L^q(X,\mu), \mathcal{C}(X))$ and $(L^q(Y,\nu),\allowbreak \mathcal{C}(Y))$ with $(X,\mu), (Y,\nu)$ metric compact measure spaces and $q \in (1, \infty)$ (see, for instance, \cite{batr1} or \cite[Example 2.7]{batr2}). Suppose that $\mu,\nu$ are Borel probability measures, that are also diffused \cite[Vol.~II, Definition 7.14.14]{bog}.
We want to apply \eqref{Lp} in this case.  For this we must have on $X$ (resp. $Y$) an {\em atomless} (ibid., Vol.~II,  Definition 7.14.15), separable measure. It is known that a Borel probability measure on a compact metric space is regular and (trivially) locally finite, therefore a Radon measure. But, every  Radon measure is $\tau-$additive (or $\tau-$regular) (see \cite[Vol.~II, Definition 7.2.1 and Proposition 7.2.2(i)]{bog}) and since it is also diffused it follows that it is atomless (ibid., Lemma 7.14.16). Finally, our Radon measure is also separable, since every compact subset of $X$ (resp. $Y$) is metrizable (ibid., Example 7.14.13).}

Thus, applying \eqref{Lp}, we have that
$$L^q(X,\mu)\widehat\otimes_{\scriptscriptstyle{p{\bf L}}}L^q(Y,\nu) = L^q(X\times Y, \mu\times \nu);$$ therefore, $(L^q(X,\mu)\widehat\otimes_{\scriptscriptstyle{p{\bf L}}}L^q(Y,\nu),\mathcal{C}(X)\otimes \mathcal{C}(Y))$ is a Banach quasi *-algebra.
\end{example}

%%%%%%%%%%%%%%%%%%%%%%%%%%%%%%%%%%%%%%%%%%%%%%%%%%%%%%%%%%%%
\begin{example} \label{exa3}
	
Take two Hilbert quasi *-algebras $(\Hone, {\Ao})$, $(\Htwo, {\Bo})$, (see Definition \ref{hial}), with $\Hone$ and $\Htwo$ the Hilbert space completions of $\Ao$ and $\Bo$, with inner product $\ip{\cdot}{\cdot}_{\scriptscriptstyle1}$ and $\ip{\cdot}{\cdot}_{\scriptscriptstyle2}$, respectively. Then, as it has been shown in \cite{adams}, $(\Hone\widehat{\otimes}_{\scriptscriptstyle h} \Htwo, {\Ao} \otimes {\Bo})$ is a Banach quasi *-algebra, where $\Hone\widehat{\otimes}_{\scriptscriptstyle h} \Htwo$ is, in fact, the Hilbert space completion of the pre-Hilbert space (and *-algebra) ${\Ao} \otimes {\Bo}$, under the norm $\|\cdot\|_{\scriptscriptstyle h}$ induced by the inner product
$$\ip{\xi}{\xi'}:= \sum_{i=1}^n \sum_{j=1}^m \ip{\xi_i}{\xi'_j}_{\scriptscriptstyle1}
\ip{\eta_i}{\eta'_j}_{\scriptscriptstyle2}, \quad \forall \ \xi, \ \xi' \in \Hone \otimes_{\scriptscriptstyle h} \Htwo,$$
with $\xi = \sum_{i=1}^n\xi_i \otimes \eta_i$ and
$\xi' = \sum_{j=1}^m \xi'_{j} \otimes \eta'_j$ (cf., e.g., \cite{scha} and/or \cite[p.~371]{fra}). The left and right outer multiplications on $\Hone\widehat{\otimes}_{\scriptscriptstyle h} \Htwo$ are defined in the usual way:
$$(x\otimes y)(a\otimes b) := (xa) \otimes (yb),  \; (ax) \otimes (by) =: (a\otimes b)(x\otimes y), $$
for any $x\otimes y$ in $\Ao \otimes \Bo$ and $a \otimes b$ in $\Hone\otimes_{\scriptscriptstyle h} \Htwo$; see also Section 3.
\end{example}

\begin{example}\label{exa4}
A concrete realization of the Example \ref{exa3} arises naturally from quantum physics. In quantum mechanics, for two quantum systems $S_1$ and $S_2$ both described by the Hilbert space $L^2(\mathbb{R}^2)$ of all square integrable complex functions of two variables, the joint system $S$ will be well described by the tensor product Hilbert space of $L^2(\mathbb{R}^2)$ with itself, that is known to be isomorphic to {$L^2(\mathbb{R}^4)$; for further reading, in this aspect, see for instance, \cite{adau,reed}}.
Considering more abstract measure spaces $(X,\mu)$ and $(Y,\nu)$, there exists an isometric isomorphism, such that
$$L^2(X, \mu){\widehat{\otimes}}_{\scriptscriptstyle h}L^2(Y, \nu) = L^2(X\times Y, \mu\times \nu),$$
where $(X\times Y,\mu\times\nu)$ denotes the product measure space. In particular, taking $X$ and $Y$ to be the real line $\mathbb{R}$ endowed with the Lebesgue measure, say $\lambda$, we obtain
$$L^2(\mathbb{R},\lambda){\widehat{\otimes}}_{\scriptscriptstyle h}L^2(\mathbb{R}, \lambda) = L^2(\mathbb{R}^2, \lambda{\times} \lambda)),$$
thus $L^2(\mathbb{R}, \lambda){\widehat{\otimes}}_{\scriptscriptstyle h}L^2(\mathbb{R},\lambda)$ is a Banach quasi *-algebra over, for instance, $\mathcal{C}^{\infty}_c(\mathbb{R})\otimes\mathcal{C}^{\infty}_c(\mathbb{R})$, where $\mathcal{C}^{\infty}_c(\mathbb{R})$ denotes the *-algebra of smooth functions on $\mathbb{R}$ with compact support.
\end{example}

\section{Representations of tensor product normed and Banach quasi *-algebras}

If $(\A[\|\cdot\|_{\scriptscriptstyle\A}],\Ao)$ and $(\B[\|\cdot\|_{\scriptscriptstyle\B}],\Bo)$ are normed quasi *-algebras, a compatible tensor norm $\bar{n}$ on $\A \otimes \B$ that respects the involutive structure of $\A \otimes_{\scriptscriptstyle\bar{n}} \B$ (i.e., $\bar{n}$ makes $\A \otimes_{\scriptscriptstyle\bar{n}} \B$ into a normed *-space, meaning a normed space endowed with a continuous involution * (see Definition \ref{2.3})) is called {\em *-compatible} (cf. Definitions \ref{groth} and \ref{loca}). If moreover $\bar{n}$ is a cross-norm (see \eqref{28}), then we speak about a {\em *-compatible cross-norm}. Note that uniform cross-norms are *-compatible.
\smallskip

$\blacktriangleright$ {\em For the calculations}, we shall {\em use the symbol} $\|\cdot\|_{\scriptscriptstyle\bar{n}}$, instead of the symbol $\bar{n}$, {\em in order to be in accordance to} \eqref{29} and \eqref{2.10}.
\smallskip

For two given normed quasi *-algebras $(\A[\|\cdot\|_{\scriptscriptstyle\A}],\Ao)$, $(\B[\|\cdot\|_{\scriptscriptstyle\B}],\Bo)$ and a {\em uniform} cross-norm $\bar{n}$ (e.g., the injective or projective cross-norm), the tensor product normed quasi *-algebra $\A\otimes_{\scriptscriptstyle \bar{n}}\B$ and its completion $\A\widehat{\otimes}_{\scriptscriptstyle \bar{n}}\B$, have been studied in Sections 3 and 4.
	
We can assume, without loss of generality, that our normed, respectively Banach quasi *-algebras $(\A[\|\cdot\|_{\scriptscriptstyle\A}],\Ao)$ and $(\B[\|\cdot\|_{\scriptscriptstyle\B}],\Bo)$ are unital. Indeed, if they are not, we can add a unit in a very standard way as in the Banach algebra case and obtain a unital normed, respectively Banach quasi *-algebra. Recall that the unit will be an element of the underlying *-algebra $\Ao$ or $\Bo$ respectively {(see discussion before Example \ref{ex2.2})}.

{{\em Denote by} $(\A_{\id_{\scriptscriptstyle\A}},\Ao)$ and $(\B_{\id_{\scriptscriptstyle\B}},\Bo)$ {\em the respective unitizations} of $(\A,\Ao)$ and $(\B,\Bo)$, which are normed (resp. Banach) quasi *-algebras, under the norm $\|(a,\lambda)\|_{\id_{\scriptscriptstyle\A}} := \|a\|_{\scriptscriptstyle\A} +|\lambda|$, for every $(a, \lambda) \in \A_{\id_{\scriptscriptstyle\A}}$, with $\|\cdot\|_{\id_{\scriptscriptstyle\A}}\!\!\upharpoonleft_{\A}\ = \|\cdot\|_{\scriptscriptstyle\A}$. In the same way, the norm $\|\cdot\|_{\id_{\scriptscriptstyle\B}}$ is defined on $\B_{\id_{\scriptscriptstyle\B}}$}.

Taking the tensor product $\A_{\id_{\scriptscriptstyle\A}}\otimes\B_{\id_{\scriptscriptstyle\B}}$, it is obvious that the *-algebra $\Ao\otimes\Bo$, as well as the space
$\A\otimes\B$ can be regarded as subspaces of $\A_{\id_{\scriptscriptstyle\A}}\otimes\B_{\id_{\scriptscriptstyle\B}}$.

If $\bar{n}$ is a uniform cross-norm on $\A\otimes\B$, it is technical to show that it lifts to a uniform cross-norm, say $\bar{n}_1$, on the tensor product $\A_{\id_{\scriptscriptstyle\A}}\otimes\B_{\id_{\scriptscriptstyle\B}}$ of the unitizations of $\A$ and $\B$ respectively.
\smallskip

$\blacktriangleright$ From now on, {\em we shall assume} that {\em our quasi *-algebras will be unital}, unless otherwise specified.
\begin{rem}\label{iso}
Observe that the map
$$\A[\|\cdot\|_{\scriptscriptstyle\A}] \ \rightarrow \ \A\otimes_{\bar{n}}\B \ : \ a \mapsto a\otimes\id_{\scriptscriptstyle\B}$$
is an isometric *-isomorphism if we restrict to its image $\A\otimes_{\scriptscriptstyle\bar{n}}\{\id_{\scriptscriptstyle\B}\}$. In the same way, $\B[\|\cdot\|_{\scriptscriptstyle\B}]$ is isometrically *-isomorphic to $\{\id_{\scriptscriptstyle\A}\}\otimes_{\scriptscriptstyle\bar{n}} \B$. In conclusion, we have the following identifications
$$\A[\|\cdot\|_{\scriptscriptstyle\A}]=\A\otimes_{\scriptscriptstyle\bar{n}}\{\id_{\scriptscriptstyle\B}\}\quad\text{and}\quad\B[\|\cdot\|_{\scriptscriptstyle\B}]=\{\id_{\scriptscriptstyle\A}\}\otimes_{\scriptscriptstyle\bar{n}}\B.$$
\end{rem}

\subsection{Representable functionals and *-representations on tensor product topological quasi *-algebras}

We are interested in studying properties concerning the representability of a tensor product normed (and/or Banach) quasi *-algebra. For this aim, we first begin stating and proving results connecting to the manner a *-representation on a tensor product, as before, is related to the *-representations on the tensor factors and vice versa.
\begin{prop}\label{6.1}
Let $(\A[\|\cdot\|_{\scriptscriptstyle\A}],\Ao)$, $(\B[\|\cdot\|_{\scriptscriptstyle\B}],\Bo)$ be Banach quasi *-algebras. Let $\bar{n}$ be a uniform cross-norm on $\A \otimes \B$. Let $\pi:\A\widehat{\otimes}_{\scriptscriptstyle\bar{n}}\B\to\LDHpi[\tau_w]$ be a continuous *-representation of the tensor product Banach quasi *-algebra $\A\widehat{\otimes}_{\scriptscriptstyle\bar{n}}\B$. Then there exist unique continuous *-rep\-re\-sen\-ta\-tions $\pi_1:\A[\|\cdot\|_{\scriptscriptstyle\A}] \to \LDHpi[\tau_w]$ and $\pi_2:\B[\|\cdot\|_{\scriptscriptstyle\B}] \to \LDHpi[\tau_w]$, such that for any $x\in\Ao$, $y\in\Bo$ and $a\in\A$, $b\in\B$, we have
\[
\begin{aligned} \label{pi_A}
&\pi(x\otimes b)=\pi_1(x)\wmult \pi_2(b)=\pi_2(b)\wmult \pi_1(x), \\
&\pi(a\otimes y)=\pi_1(a)\wmult \pi_2(y)=\pi_2(y)\wmult \pi_1(a).
\end{aligned}
\tag{6.1}
\]
The *-representations $\pi_1$, $\pi_2$ are restrictions of the *-rep\-re\-sen\-ta\-tion $\pi$ to $\A[\|\cdot\|_{\scriptscriptstyle\A}]$, $\B[\|\cdot\|_{\scriptscriptstyle\B}]$ respectively.
\end{prop}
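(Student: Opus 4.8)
The plan is to produce $\pi_1$ and $\pi_2$ as the pullbacks of $\pi$ along the canonical embeddings of Remark \ref{iso}. Concretely, set
$$\pi_1(a):=\pi(a\otimes\id_{\scriptscriptstyle\B}),\quad a\in\A,\qquad \pi_2(b):=\pi(\id_{\scriptscriptstyle\A}\otimes b),\quad b\in\B.$$
Both maps are linear, being compositions of $\pi$ with the linear embeddings $a\mapsto a\otimes\id_{\scriptscriptstyle\B}$ and $b\mapsto\id_{\scriptscriptstyle\A}\otimes b$, take values in $\LDHpi$, and are precisely the restrictions of $\pi$ asserted in the statement. Their $\tau_w$-continuity is then immediate: by Remark \ref{iso} these embeddings are isometric (here one uses the normalization $\|\id_{\scriptscriptstyle\A}\|_{\scriptscriptstyle\A}=\|\id_{\scriptscriptstyle\B}\|_{\scriptscriptstyle\B}=1$ and the cross-norm property of $\bar{n}$, whence $\|a\otimes\id_{\scriptscriptstyle\B}\|_{\scriptscriptstyle\bar{n}}=\|a\|_{\scriptscriptstyle\A}$), hence norm-continuous, while $\pi$ is $\tau_w$-continuous by hypothesis.

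Next I would verify that $\pi_1$, and symmetrically $\pi_2$, satisfies the two axioms of Definition \ref{26}. For the involution, using that the unit of a *-algebra is selfadjoint, $\id_{\scriptscriptstyle\B}^*=\id_{\scriptscriptstyle\B}$, one gets $\pi_1(a^*)=\pi\big((a\otimes\id_{\scriptscriptstyle\B})^*\big)=\pi(a\otimes\id_{\scriptscriptstyle\B})^\dagger=\pi_1(a)^\dagger$ for every $a\in\A$, by Definition \ref{26}(i). For the multiplier axiom, if $a\in\A$ and $x\in\Ao$ then $x\otimes\id_{\scriptscriptstyle\B}$ lies in the distinguished *-subalgebra $\Ao\otimes\Bo$ while $a\otimes\id_{\scriptscriptstyle\B}\in\A\otimes\B$, so Definition \ref{26}(ii) applies and, evaluating the outer product in $\A\otimes\B$ with $\id_{\scriptscriptstyle\B}\id_{\scriptscriptstyle\B}=\id_{\scriptscriptstyle\B}$,
$$\pi_1(a)\wmult\pi_1(x)=\pi\big((a\otimes\id_{\scriptscriptstyle\B})(x\otimes\id_{\scriptscriptstyle\B})\big)=\pi\big((ax)\otimes\id_{\scriptscriptstyle\B}\big)=\pi_1(ax);$$
the right-multiplier form of Definition \ref{26}(ii) gives $\pi_1(x)\wmult\pi_1(a)=\pi_1(xa)$ in the same manner. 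Thus $\pi_1,\pi_2$ are continuous *-representations of $\A$, $\B$ respectively.

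The relations \eqref{pi_A} then follow by a direct computation with the partial product $\wmult$. Since $x\otimes\id_{\scriptscriptstyle\B}$ and $\id_{\scriptscriptstyle\A}\otimes y$ belong to $\Ao\otimes\Bo$, Definition \ref{26}(ii), in both its left- and right-multiplier forms (cf.\ the remark following that definition), guarantees that each weak product in \eqref{pi_A} is defined and is obtained by applying $\pi$ to the corresponding outer product in $\A\otimes\B$:
\begin{align*}
\pi_1(x)\wmult\pi_2(b)&=\pi\big((x\otimes\id_{\scriptscriptstyle\B})(\id_{\scriptscriptstyle\A}\otimes b)\big)=\pi(x\otimes b)\\
&=\pi\big((\id_{\scriptscriptstyle\A}\otimes b)(x\otimes\id_{\scriptscriptstyle\B})\big)=\pi_2(b)\wmult\pi_1(x),
\end{align*}
and the second line of \eqref{pi_A} is obtained identically after exchanging the roles of the two factors. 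For uniqueness, I would invoke the unit convention $\pi_1(\id_{\scriptscriptstyle\A})=\pi_2(\id_{\scriptscriptstyle\B})=I_{\D_{\scriptscriptstyle\pi}}$ together with the identities $T\wmult I=I\wmult T=T$, valid for every $T\in\LDHpi$: specializing the second line of \eqref{pi_A} to $y=\id_{\scriptscriptstyle\B}$ forces $\pi_1(a)=\pi(a\otimes\id_{\scriptscriptstyle\B})$, and the first line to $x=\id_{\scriptscriptstyle\A}$ forces $\pi_2(b)=\pi(\id_{\scriptscriptstyle\A}\otimes b)$, so any pair fulfilling \eqref{pi_A} coincides with the one constructed.

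I expect the genuine technical point to be the bookkeeping inside the partial *-algebra $\LDHpi$: before invoking Definition \ref{26}(ii) one must know that the weak products appearing in \eqref{pi_A} are actually defined, i.e.\ that the inclusions \eqref{2.4} hold for the pairs of operators involved. This is ensured precisely because in each such product one factor always comes from $\Ao\otimes\Bo$, which is exactly the situation covered by the multiplier axiom; the auxiliary identity $T\wmult I=T$ used in the uniqueness step likewise deserves a short justification, since $\wmult$ is only partially defined.
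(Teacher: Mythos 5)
Your proposal is correct and follows essentially the same route as the paper's proof: defining $\pi_1(a)=\pi(a\otimes\id_{\scriptscriptstyle\B})$, $\pi_2(b)=\pi(\id_{\scriptscriptstyle\A}\otimes b)$, deducing $\tau_w$-continuity from the isometric embeddings of Remark \ref{iso} together with the continuity of $\pi$, and obtaining \eqref{pi_A} from the factorization $x\otimes b=(x\otimes\id_{\scriptscriptstyle\B})(\id_{\scriptscriptstyle\A}\otimes b)$ via the multiplier axiom of Definition \ref{26}. In fact you supply details the paper leaves implicit (the explicit verification of the *-representation axioms, and the uniqueness argument via the unit convention $\pi(e)=I_{\D_{\scriptscriptstyle\pi}}$ and $T\wmult I=T$), which is a welcome strengthening rather than a deviation.
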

\begin{proof}
Using the given continuous *-representation $\pi$ of $\A\widehat{\otimes}_{\scriptscriptstyle\bar{n}}
\B$ (see Definition \ref{26}), we define a map $\pi_1$ on $\A$ in the following way
$$\pi_1(a)\xi:=\pi(a\otimes \id_{\scriptscriptstyle\B})\xi,\quad \forall \ a\in\A, \ \xi\in\D_\pi.$$
It is easily seen that the map $\pi_1$ is a *-representation of $\A$ in $\mathcal{L}\ad(\mathcal{D}_{\pi_1}, \H_{\pi})$ with $\D_{\pi_1}=\D_{\pi}$. In a similar way, a  *-representation  $\pi_2$ of $\B$  in $\mathcal{L}\ad(\mathcal{D}_{\pi_2}, \H_{\pi})$ is defined, with $\D_{\pi_2}=\D_\pi$, i.e.,
$$\pi_2(b)\xi:=\pi(\id_{\scriptscriptstyle\A}\otimes b)\xi, \quad \forall \ b\in\B, \ \xi\in\D_\pi.$$

Since $\pi:\A\widehat{\otimes}_{\scriptscriptstyle\bar{n}}\B\to\LDHpi$ is ($\|\cdot\|_{\scriptscriptstyle\bar{n}}$-$\tau_w$)-continuous, $\pi_1$ and $\pi_2$ are ($\|\cdot\|_{\scriptscriptstyle\A}$-$\tau_w$), ($\|\cdot\|_{\scriptscriptstyle\B}$-$\tau_w$)-continuous *-representations of $(\A[\|\cdot\|_{\scriptscriptstyle\A}],\Ao)$ and $(\B[\|\cdot\|_{\scriptscriptstyle\B}],\Bo)$ respectively. Indeed, let $\{a_n\}$ be a sequence of elements in $\A[\|\cdot\|_{\scriptscriptstyle\A}]$ and $a \in \A$, such that $\|a_n-a\|_{\scriptscriptstyle\A}\to0$, as $n\to\infty$. By Remark \ref{iso}, we have
$$\|a_n\otimes \id_{\scriptscriptstyle\B} - a\otimes\id_{\scriptscriptstyle\B}\|_{\scriptscriptstyle\bar{n}} \to 0.$$
Hence by ($\|\cdot\|_{\scriptscriptstyle\bar{n}}$-$\tau_w$)-continuity of $\pi$, we obtain (see discussion before Definition \ref{26})
$$\ip{\pi_1(a_n)\xi}{\eta}=\ip{\pi(a_n\otimes\id_{\scriptscriptstyle\B})\xi}{\eta}\to\ip{\pi(a\otimes\id_{\scriptscriptstyle\B})\xi}{\eta}=\ip{\pi_1(a)\xi}{\eta},$$
for all $\xi,\eta\in\D_{\pi_1}=\D_\pi$. Hence, $\pi_1(a_n)$ $\tau_w$-converges to $\pi_1(a)$. The same argument is valid for $\pi_2$.
\smallskip

Let us now show the equalities \eqref{pi_A}. Take $x\in\Ao$ and $b\in\B$, then
\begin{align*}
\pi(x\otimes b)&=\pi\big((x\otimes e_{\scriptscriptstyle\B})(e_{\scriptscriptstyle\A} \otimes b)\big)=\pi(x\otimes\id_{\scriptscriptstyle\B})\wmult\pi(\id_{\scriptscriptstyle\A}\otimes b)\\
&=\pi_1(x)\wmult\pi_2(b).
\end{align*}
In an absolutely similar way, we obtain the 2nd line equalities of \eqref{pi_A}, for every $y\in\Bo$ and $a\in\A$.

The uniqueness of $\pi_1, \ \pi_2$ is a direct consequence of their definition.
\end{proof}

\begin{rem}
	Observe that in Proposition \ref{6.1} the two Banach quasi *-algebras $(\A[\|\cdot\|_{\scriptscriptstyle\A}],\Ao)$ and $(\B[\|\cdot\|_{\scriptscriptstyle\B}],\Bo)$ have been represented in the same family $\LDHpi$ of unbounded operators as their tensor product Banach quasi *-algebra $(\A\widehat{\otimes}_{\scriptscriptstyle\bar{n}}\B,\Ao\otimes\Bo)$, i.e., $\D_{\scriptscriptstyle \pi_1}=\D_{\scriptscriptstyle \pi_2}=\D_{\scriptscriptstyle \pi}$ and $\Hil_{\scriptscriptstyle \pi_1}=\Hil_{\scriptscriptstyle \pi_2}=\Hil_{\scriptscriptstyle\pi}$.
	Moreover, by \eqref{pi_A} the image of $\A$ under $\pi_1$ (resp. the image of $\B$ under $\pi_2$) commutes with $\pi_2(\Bo)$ (resp. $\pi_1(\Ao)$). We say that the *-representations $\pi_1$ and $\pi_2$ of the (Banach) quasi *-algebras $(\A,\Ao)$ and $(\B,\Bo)$ respectively have \textit{quasi-commuting ranges}.
\end{rem}

\begin{prop}\label{repr_func}
Under the assumptions of Proposition {\em \ref{6.1}}, for every fixed $\xi\in\mathcal{D}_{\scriptscriptstyle\pi}$, the linear functionals $\omega_1(a):=\ip{\pi(a\otimes e_{\scriptscriptstyle\B})\xi}{\xi}$, for all $a\in\A$, and $\omega_2(b)=\ip{\pi(e_{\scriptscriptstyle\B}\otimes b)\xi}{\xi}$, for all $b\in\B$, are representable and continuous respectively on $(\A[\|\cdot\|_{\scriptscriptstyle\A}],\Ao)$ and $(\B[\|\cdot\|_{\scriptscriptstyle\B}],\Bo)$.
\end{prop}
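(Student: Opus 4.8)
The plan is to identify $\omega_1$ and $\omega_2$ with the vector functionals of the *-representations $\pi_1,\pi_2$ produced in Proposition \ref{6.1}, and then to verify directly the three conditions of Definition \ref{27} together with continuity. By the construction in the proof of Proposition \ref{6.1} one has $\omega_1(a)=\ip{\pi(a\otimes e_{\scriptscriptstyle\B})\xi}{\xi}=\ip{\pi_1(a)\xi}{\xi}$ for every $a\in\A$ and, symmetrically, $\omega_2(b)=\ip{\pi_2(b)\xi}{\xi}$ for every $b\in\B$. Thus it is enough to treat $\omega_1$; the argument for $\omega_2$ is verbatim the same with $\pi_2$ and $\B$ in place of $\pi_1$ and $\A$. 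Throughout I would use the following structural facts about a *-representation: by Definition \ref{26}(i) one has $\pi_1(a)\ad=\pi_1(a^*)$ and $\D(\pi_1(a)^*)\supseteq\D_{\scriptscriptstyle\pi}$ for every $a\in\A$, while, as usual, the restriction $\pi_1\!\upharpoonright_{\Ao}$ takes values in $\LpDr$, so that $\pi_1(x)\D_{\scriptscriptstyle\pi}\subseteq\D_{\scriptscriptstyle\pi}$ for $x\in\Ao$. Moreover, by Definition \ref{26}(ii) together with $T\wmult S=(T\ad)^*S$, for $a\in\A$ and $x\in\Ao$ the product rewrites as $\pi_1(a^*x)=\pi_1(a^*)\wmult\pi_1(x)=\pi_1(a)^*\pi_1(x)$, the weak multiplier being defined precisely so that $\pi_1(x)\D_{\scriptscriptstyle\pi}\subseteq\D(\pi_1(a)^*)$.

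The conditions (L.1) and (L.3) are then short. For (L.1), taking $a=x$ in the identity above gives $\pi_1(x^*x)=\pi_1(x)^*\pi_1(x)$, and since $\pi_1(x)\xi\in\D(\pi_1(x)^*)$ one obtains $\omega_1(x^*x)=\ip{\pi_1(x)^*\pi_1(x)\xi}{\xi}=\ip{\pi_1(x)\xi}{\pi_1(x)\xi}=\|\pi_1(x)\xi\|^2\geq0$. For (L.3), using $\pi_1(a^*x)=\pi_1(a)^*\pi_1(x)$ and $\xi\in\D_{\scriptscriptstyle\pi}=\D(\pi_1(a))$, I would move $\pi_1(a)^*$ across the inner product to get $\omega_1(a^*x)=\ip{\pi_1(x)\xi}{\pi_1(a)\xi}$, whence the Cauchy--Schwarz inequality yields $|\omega_1(a^*x)|\leq\|\pi_1(x)\xi\|\,\|\pi_1(a)\xi\|=\omega_1(x^*x)^{1/2}\,\|\pi_1(a)\xi\|$, so that $\gamma_a:=\|\pi_1(a)\xi\|$ serves in (L.3).

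Condition (L.2) is where the unbounded-operator bookkeeping is the real obstacle, and I would organize it around the single identity $\omega_1(u^*cv)=\ip{\pi_1(v)\xi}{\pi_1(c^*)\pi_1(u)\xi}$, valid for $u,v\in\Ao$ and $c\in\A$. This is obtained by writing $u^*cv=u^*(cv)$, using the right-multiplier relation $\pi_1(u^*)\wmult\pi_1(cv)=\pi_1(u^*cv)$ with $\pi_1(u^*)\wmult\pi_1(cv)=\pi_1(u)^*\pi_1(cv)$, then expanding $\pi_1(cv)=\pi_1(c)\wmult\pi_1(v)=\pi_1(c^*)^*\pi_1(v)$, and finally transferring the two adjoints back across the inner product. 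Applying the identity to $y^*a^*x$ (with $u=y$, $c=a^*$, $v=x$) gives $\omega_1(y^*a^*x)=\ip{\pi_1(x)\xi}{\pi_1(a)\pi_1(y)\xi}$, while applying it to $x^*ay$ (with $u=x$, $c=a$, $v=y$) gives $\omega_1(x^*ay)=\ip{\pi_1(y)\xi}{\pi_1(a^*)\pi_1(x)\xi}$; conjugating the latter and using $\pi_1(a^*)\pi_1(x)\xi=\pi_1(a)^*\pi_1(x)\xi$ (legitimate since $\pi_1(x)\xi\in\D_{\scriptscriptstyle\pi}$) together with $\pi_1(y)\xi\in\D_{\scriptscriptstyle\pi}=\D(\pi_1(a))$ identifies $\overline{\omega_1(x^*ay)}$ with $\omega_1(y^*a^*x)$, which is (L.2). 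The only delicate point throughout is to check that every vector to which a Hilbert-space adjoint is applied lies in the relevant domain; this is exactly what is guaranteed by $\D(\pi_1(a)^*)\supseteq\D_{\scriptscriptstyle\pi}$ and by the invariance $\pi_1(\Ao)\D_{\scriptscriptstyle\pi}\subseteq\D_{\scriptscriptstyle\pi}$, and it is the step I would write out most carefully.

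Finally, continuity is immediate from Proposition \ref{6.1}. Since $\pi_1$ is ($\|\cdot\|_{\scriptscriptstyle\A}$-$\tau_w$)-continuous, the scalar map $a\mapsto p_{\xi,\xi}(\pi_1(a))=|\ip{\pi_1(a)\xi}{\xi}|=|\omega_1(a)|$ is $\|\cdot\|_{\scriptscriptstyle\A}$-continuous, i.e. there is $C_\xi>0$ with $|\omega_1(a)|\leq C_\xi\|a\|_{\scriptscriptstyle\A}$ for all $a\in\A$; hence $\omega_1$ is a continuous representable functional, $\omega_1\in\crep$. The same reasoning applied to $\pi_2$ shows that $\omega_2$ is representable and continuous on $(\B[\|\cdot\|_{\scriptscriptstyle\B}],\Bo)$, completing the proof.
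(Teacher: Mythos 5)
Your overall route is the same as the paper's: you identify $\omega_1,\omega_2$ with the vector functionals $a\mapsto\ip{\pi_1(a)\xi}{\xi}$, $b\mapsto\ip{\pi_2(b)\xi}{\xi}$ of the *-representations $\pi_1,\pi_2$ from Proposition \ref{6.1}, check (L.1)--(L.3), and get continuity from the ($\|\cdot\|_{\scriptscriptstyle\A}$-$\tau_w$)-continuity of $\pi_1$; your (L.1), (L.3) and continuity arguments are correct and coincide with what the paper writes out (the paper details only (L.3) and continuity). The genuine problem is your (L.2) --- precisely the condition the paper dismisses as ``easily verified''. Your pivotal identity
\[
\omega_1(u^*cv)=\ip{\pi_1(v)\xi}{\pi_1(c^*)\pi_1(u)\xi},\qquad u,v\in\Ao,\ c\in\A,
\]
presupposes $\pi_1(u)\xi\in\D(\pi_1(c^*))=\D_{\scriptscriptstyle\pi}$, i.e.\ the invariance $\pi_1(\Ao)\D_{\scriptscriptstyle\pi}\subseteq\D_{\scriptscriptstyle\pi}$, which you assert by saying that ``as usual'' $\pi_1\!\up_{\Ao}$ takes values in $\LpDr$. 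That is not granted by Definition \ref{26}: a *-representation maps into $\LDHpi$, and condition (ii) only requires the \emph{weak} products $\pi_1(a)\wmult\pi_1(x)=(\pi_1(a)\ad)^*\pi_1(x)$ to exist, i.e.\ the domain conditions \eqref{2.4}. Invariance of $\D_{\scriptscriptstyle\pi}$ under $\pi_1(\Ao)$ is an extra hypothesis, not a consequence --- the whole point of $\wmult$ is to make sense of products when $\pi_1(x)\xi$ leaves $\D_{\scriptscriptstyle\pi}$ --- and the same unwarranted assumption reappears when you write $\pi_1(a)\pi_1(y)\xi$ and when you identify $\pi_1(a^*)\pi_1(x)\xi$ with $\pi_1(a)^*\pi_1(x)\xi$. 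So, as written, your verification of (L.2) does not go through in the paper's framework (the statement itself is safe: it is essentially \cite[Proposition 2.9]{adatra}, which the paper invokes elsewhere).

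The gap closes without any extra hypothesis if you pair ``dual'' decompositions instead of pushing both adjoints onto the same vector. Writing $y^*a^*x=y^*(a^*x)$, using the remark after Definition \ref{26} and $\pi_1(y^*)\ad=\pi_1(y)$, you get
\[
\omega_1(y^*a^*x)=\ip{\pi_1(y)^*\bigl(\pi_1(a^*x)\xi\bigr)}{\xi}=\ip{\pi_1(a^*x)\xi}{\pi_1(y)\xi},
\]
which uses only $\xi\in\D(\pi_1(y))$ and $\pi_1(a^*x)\xi\in\D(\pi_1(y)^*)$, the latter being part of \eqref{2.4}. Writing instead $x^*ay=(x^*a)y$ and using $\pi_1(x^*a)\ad=\pi_1\bigl((x^*a)^*\bigr)=\pi_1(a^*x)$, you get
\[
\omega_1(x^*ay)=\ip{\pi_1(a^*x)^*\bigl(\pi_1(y)\xi\bigr)}{\xi}=\ip{\pi_1(y)\xi}{\pi_1(a^*x)\xi}.
\]
The two right-hand sides are complex conjugates of one another, which is exactly (L.2), and every adjoint is now applied to a vector whose membership in the relevant domain follows from Definition \ref{26}(ii) alone. (A cosmetic point: in (L.3) take $\gamma_a=\|\pi_1(a)\xi\|+1$, as the paper does, since Definition \ref{27} requires $\gamma_a>0$.)
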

\begin{proof} Let $\pi_1$ be the *-representation of $(\A,\Ao)$ defined by $\pi$ as in the proof of Proposition \ref{6.1}. For every fixed $\xi\in\D_{\scriptscriptstyle \pi}=\mathcal{D}_{\scriptscriptstyle\pi_1}$ define a linear functional $\omega_1:\A\to\mathbb{C}$ as follows:
$$\omega_1(a):=\ip{\pi(a\otimes\id_{\scriptscriptstyle\B})\xi}{\xi}=\ip{\pi_1(a)\xi}{\xi},\quad \forall \ a\in\A.$$
We show that $\omega_1$ is representable. The conditions (L.1) and (L.2) of Definition \ref{27} are easily verified. To show (L.3), consider $a\in\A$ and $x \in \Ao$. Then
\begin{align*}
|\omega_1(a^*x)|&=\left|\ip{\pi_1(a^*x)\xi}{\xi}\right|=\left|\ip{\pi_1(a)\ad \wmult \pi_1(x)\xi}{\xi}\right|=\left|\ip{\pi_1(x)\xi}{\pi_1(a)\xi}\right|\\
&\leq\|\pi_1(a)\xi\|\|\pi_1(x)\xi\|\leq(\gamma_a+1)\ip{\pi_1(x^*x)\xi}{\xi}^\frac12\\
&=(\gamma_a+1)\omega(x^*x)^\frac12,
\end{align*}
where $\gamma_a=\|\pi_1(a)\xi\|\geq0$.

With a similar argument, it is shown that $\omega_2:\B\to\mathbb{C}$ defined as
$$\omega_2(b):=\ip{\pi(\id_{\scriptscriptstyle\A}\otimes b)\xi}{\xi}=\ip{\pi_2(b)\xi}{\xi}, \quad \forall \ b\in\B$$
is a representable linear functional on $\B$.
\smallskip

We can prove now that $\omega_1$ is continuous. For $a\in\A$, consider a sequence $\{a_n\}$ in $\A[\|\cdot\|_{\scriptscriptstyle\A}]$ such that $\|a_n-a\|_{\scriptscriptstyle\A}\to0$, as $n\to\infty$. Then
$$\left|\omega_1(a_n-a)\right|=\left|\ip{\pi_1(a_n-a)\xi}{\xi}\right|\leq \gamma_{\xi}\|a_n-a\|_{\scriptscriptstyle\A},$$
for a positive constant $\gamma_\xi$, since $\pi_1$ is $(\|\cdot\|_{\scriptscriptstyle\A}-\tau_w)$-continuous *-representation of $(\A,\Ao)$.

In exactly the same way, it is shown that $\omega_2$ is continuous on $(\B[\|\cdot\|_{\scriptscriptstyle\B}],\Bo)$.
\end{proof}

 If we now consider two *-representations {$\pi_1$, $\pi_2$} of the (Banach) quasi *-algebras $(\A[\|\cdot\|_{\scriptscriptstyle\A}],\Ao)$ and $(\B[\|\cdot\|_{\scriptscriptstyle\B}],\Bo)$ respectively with quasi-commuting ranges, it is unclear how to define a *-representation on the tensor product normed quasi *-algebra $\A\otimes_{\scriptscriptstyle\bar{n}}\B$, since a priori the range  {$\pi_1(\A)$} (resp. {$\pi_2(\B)$}) commutes only with the range {$\pi_2(\Bo)$} (resp {$\pi_1(\Ao)$}).
\smallskip

 Let us see what we can do in this case. If $\pi_1:\A\to\LDHpione$, $\pi_2:\B\to\LDHpitwo$ are two *-rep\-re\-sen\-ta\-tions of the quasi *-algebras $(\A,\Ao)$, $(\B,\Bo)$ respectively, then there is a unique *-representation
$\pi:\A\otimes\B\to\LDHpit$ on the tensor product quasi *-algebra $(\A\otimes\B,\Ao\otimes\Bo)$ defined as follows
$$\pi(c):=\sum_{i=1}^n\pi_1(a_i)\otimes\pi_2(b_i),\quad \forall \ c=\sum_{i=1}^na_i\otimes b_i\in\A\otimes\B,$$
where $\Hil_{\scriptscriptstyle \pi_1}\widehat{\otimes}_h \Hil_{\scriptscriptstyle \pi_2}$ is the Hilbert space completion of $\H_{\pi_1}\otimes_h \H_{\pi_2}$ with respect {to the norm induced by the inner product of $\H_{\pi_1}\otimes_h \H_{\pi_2}$ (see Example \ref{exa3}). Moreover $\pi_1(a_i)\otimes\pi_2(b_i)$,  $i=1,...,n$, are uniquely defined linear operators from $\D_{\pi_1}\otimes \D_{\pi_2}$ in $\H_{\pi_1}\widehat{\otimes}_h \H_{\pi_2}$ as in the discussion before Definition \ref{uni}.}
{\em The *-representation $\pi$ will be denoted by} $\pi_1\otimes\pi_2$.

Notice that $\D_{\scriptscriptstyle \pi_1}\otimes\D_{\scriptscriptstyle \pi_2}$ is a dense subspace in $\Hil_{\scriptscriptstyle \pi_1}\widehat{\otimes}_h \Hil_{\scriptscriptstyle \pi_2}$. The linear operator $(\pi_1\otimes\pi_2)(c)$ belongs to $\LDHpit$, for every $c\in\A\otimes\B$, thus the map $\pi_1\otimes\pi_2$ is well-defined and it is linear.
Moreover, for every $c\in\A\otimes\B$, $z\in\Ao\otimes\Bo$ we have $(\pi_1\otimes\pi_2)(c^*)=\big((\pi_1\otimes\pi_2)(c)\big)^{\dagger}$ and $\pi(cz)=\pi(c)\wmult\pi(z)$. Hence, all the requirements of Definition \ref{26} are verified and $\pi$ is indeed a *-representation of $(\A\otimes\B,\Ao\otimes\Bo)$. {Furthermore, we have the following}

\begin{prop}\label{cont_repr}
	Let $(\A[\|\cdot\|_{\scriptscriptstyle\A}],\Ao)$, $(\B[\|\cdot\|_{\scriptscriptstyle\B}],\Bo)$ be Banach quasi *-algebras. Let $\bar{n}$ be a uniform cross-norm on $\A\otimes\B$. Let $\pi_1: \A[\|\cdot\|_{\scriptscriptstyle\A}]\to\LDHpione[\tau_w]$ and $\pi_2: \B[\|\cdot\|_{\scriptscriptstyle\B}]\to\LDHpitwo[\tau_w]$ be continuous *-representations of  $(\A[\|\cdot\|_{\scriptscriptstyle\A}],\Ao)$ and $(\B[\|\cdot\|_{\scriptscriptstyle\B}], \Bo)$, respectively. Then $\pi_1\otimes\pi_2:\A\otimes_{\scriptscriptstyle\bar{n}}\B\to\LDHpit[\tau_w]$ is a continuous *-representation of $\A\otimes_{\scriptscriptstyle\bar{n}}\B$.
\end{prop}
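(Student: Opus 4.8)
The plan is to exploit the fact, established in the discussion preceding the statement, that $\pi_1\otimes\pi_2$ is already a *-representation of the algebraic tensor product $(\A\otimes\B,\Ao\otimes\Bo)$ on the domain $\D_{\scriptscriptstyle\pi_1}\otimes\D_{\scriptscriptstyle\pi_2}$; only its continuity with respect to $\|\cdot\|_{\scriptscriptstyle\bar{n}}$ and $\tau_w$ remains to be verified. Since $\tau_w$ is generated by the seminorms $p_{\Xi,\Theta}(T)=|\ip{T\Xi}{\Theta}|$ indexed by vectors $\Xi,\Theta$ in the domain, a map into $\LDHpit[\tau_w]$ is continuous exactly when each $p_{\Xi,\Theta}\circ(\pi_1\otimes\pi_2)$ is. As the functional $c\mapsto\ip{(\pi_1\otimes\pi_2)(c)\Xi}{\Theta}$ is linear in $c$, it therefore suffices to prove that, for each fixed pair $\Xi,\Theta\in\D_{\scriptscriptstyle\pi_1}\otimes\D_{\scriptscriptstyle\pi_2}$, this functional is bounded on $\A\otimes_{\scriptscriptstyle\bar{n}}\B$.

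First I would reduce to elementary vectors. Writing $\Xi$ and $\Theta$ as finite sums of elementary tensors and using the sesquilinearity of the inner product on $\Hil_{\scriptscriptstyle\pi_1}\widehat{\otimes}_{\scriptscriptstyle h}\Hil_{\scriptscriptstyle\pi_2}$, it is enough to bound $\ip{(\pi_1\otimes\pi_2)(c)(\xi\otimes\xi')}{\eta\otimes\eta'}$ for $\xi,\eta\in\D_{\scriptscriptstyle\pi_1}$ and $\xi',\eta'\in\D_{\scriptscriptstyle\pi_2}$. For $c=\sum_{i=1}^n a_i\otimes b_i$, the definitions of $\pi_1\otimes\pi_2$ and of the tensor inner product give
\begin{equation*}
\ip{(\pi_1\otimes\pi_2)(c)(\xi\otimes\xi')}{\eta\otimes\eta'}
=\sum_{i=1}^n\ip{\pi_1(a_i)\xi}{\eta}_{\scriptscriptstyle1}\,\ip{\pi_2(b_i)\xi'}{\eta'}_{\scriptscriptstyle2}
=\sum_{i=1}^n f(a_i)\,g(b_i),
\end{equation*}
where $f(a):=\ip{\pi_1(a)\xi}{\eta}_{\scriptscriptstyle1}$ and $g(b):=\ip{\pi_2(b)\xi'}{\eta'}_{\scriptscriptstyle2}$. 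The hypothesised $(\|\cdot\|_{\scriptscriptstyle\A}$-$\tau_w)$- and $(\|\cdot\|_{\scriptscriptstyle\B}$-$\tau_w)$-continuity of $\pi_1$ and $\pi_2$, in the exact form exploited in the continuity argument of Proposition \ref{6.1}, says precisely that $f\in\A^*$ and $g\in\B^*$, with finite dual norms depending on the chosen vectors.

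The crucial observation is then that the right-hand side is exactly the pairing controlled by the injective cross-norm \eqref{29}: normalising and applying that definition, for any $f\in\A^*$ and $g\in\B^*$ one has $\big|\sum_{i=1}^n f(a_i)g(b_i)\big|\leq\|f\|\,\|g\|\,\|c\|_{\scriptscriptstyle\lambda}$. Since $\bar{n}$ is a uniform, hence compatible, cross-norm, inequality \eqref{123} yields $\|c\|_{\scriptscriptstyle\lambda}\leq\|c\|_{\scriptscriptstyle\bar{n}}$, so that
\begin{equation*}
\big|\ip{(\pi_1\otimes\pi_2)(c)(\xi\otimes\xi')}{\eta\otimes\eta'}\big|\leq\|f\|\,\|g\|\,\|c\|_{\scriptscriptstyle\bar{n}}.
\end{equation*}

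Summing the finitely many estimates of this type arising from the expansions of $\Xi$ and $\Theta$ produces a bound $|\ip{(\pi_1\otimes\pi_2)(c)\Xi}{\Theta}|\leq C_{\Xi,\Theta}\,\|c\|_{\scriptscriptstyle\bar{n}}$, which is precisely the required $(\|\cdot\|_{\scriptscriptstyle\bar{n}}$-$\tau_w)$-continuity of $\pi_1\otimes\pi_2$. The step I expect to be the main obstacle is the clean identification of the weak matrix coefficient with the injective-norm functional $f\otimes g$ together with the passage $\|\cdot\|_{\scriptscriptstyle\lambda}\leq\|\cdot\|_{\scriptscriptstyle\bar{n}}$ via \eqref{123}; the remaining manipulations are routine bookkeeping with finite sums and sesquilinearity, so I would keep those brief.
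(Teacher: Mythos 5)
Your proof is correct and follows essentially the same route as the paper's: both express the weak matrix coefficient $\ip{(\pi_1\otimes\pi_2)(c)\Xi}{\Theta}$ as a finite sum of values $(f\otimes g)(c)$ of tensor products of the norm-continuous functionals supplied by the weak continuity of $\pi_1$ and $\pi_2$, and then conclude boundedness from the cross-norm hypothesis on $\bar{n}$. The only difference is expository: where the paper invokes the general fact that compatible cross-norms make tensor products of continuous functionals bounded, you make that step explicit through the injective-norm estimate $|\sum_i f(a_i)g(b_i)|\leq\|f\|\,\|g\|\,\|c\|_{\scriptscriptstyle\lambda}$ and the inequality $\|\cdot\|_{\scriptscriptstyle\lambda}\leq\|\cdot\|_{\scriptscriptstyle\bar{n}}$ of \eqref{123}, which is a sound (and slightly more self-contained) justification of the same point.
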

\begin{proof} We have seen that $\pi_1\otimes\pi_2:\A\otimes_{\scriptscriptstyle\bar{n}}\B\to\LDHpit[\tau_w]$ is a well defined *-representation of $\A\otimes_{\scriptscriptstyle\bar{n}}\B$. What remains to be shown is that $\pi_1\otimes\pi_2$ is weakly continuous. Let  $c=\sum_{i=1}^na_i\otimes b_i\in\A\otimes_{\scriptscriptstyle\bar{n}}\B$ and $\Psi=\sum_{j=1}^m\xi_j\otimes\eta_j,\Lambda=\sum_{k=1}^l\zeta_k\otimes\chi_k\in\D_{\scriptscriptstyle \pi_1}\otimes\D_{\scriptscriptstyle \pi_2}$. Then,
\begin{align*}
\ip{(\pi_1\otimes\pi_2)(c)\Psi}{\Lambda}&=\sum_{j=1}^m\sum_{k=1}^l\sum_{i=1}^n\ip{(\pi_1(a_i)\otimes\pi_2(b_i))(\xi_j\otimes\eta_j)}{\zeta_k\otimes\chi_k}\\
&=\sum_{j=1}^m\sum_{k=1}^l\sum_{i=1}^n\ip{\pi_1(a_i)\xi_j\otimes\pi_2(b_i)\eta_j}{\zeta_k\otimes\chi_k}\\
&=\sum_{j=1}^m\sum_{k=1}^l\sum_{i=1}^n\ip{\pi_1(a_i)\xi_j}{\zeta_k}\ip{\pi_2(b_i)\eta_j}{\chi_k}\\
&=\sum_{j=1}^m\sum_{k=1}^l\sum_{i=1}^nf_{\xi_j,\zeta_k}(a_i)g_{\eta_j,\chi_k}(b_i)\\
&=\sum_{j=1}^m\sum_{k=1}^l(f_{\xi_j,\zeta_k}\otimes g_{\eta_j,\chi_k})\left(\sum_{i=1}^na_i\otimes b_i\right),
\end{align*}
where, for $a\in\A$, $b\in\B$, we define the linear functionals $f_{\xi_j,\zeta_k}(a):=\ip{\pi_1(a)\xi_j}{\zeta_k}$ and $g_{\eta_j,\chi_k}(b):=\ip{\pi_2(b)\xi_j}{\chi_k}$ on $\A$ and $\B$ respectively, for $j=1,\ldots, m$, $k=1,\ldots, l$. Since $\pi_1$ and $\pi_2$ are weakly continuous, these functionals are norm continuous, so the same is also true for their tensor product and their sum. This implies that $\pi_1\otimes\pi_2:\A\otimes_{\scriptscriptstyle\bar{n}}\B\to\LDHpit[\tau_w]$ is continuous.
\end{proof}

\begin{prop}\label{66_review}
With the same assumptions {as in Proposition {\em \ref{cont_repr}}, we have the following:} for a fixed $\xi_1\in\D_{\scriptscriptstyle \pi_1}$ and $\xi_2\in\D_{\scriptscriptstyle \pi_2}$, {the linear} functionals $\omega_1(a):=\ip{\pi_1(a)\xi_1}{\xi_1}$, for all $a\in\A$, $\omega_2(b):=\ip{\pi_2(b)\xi_2}{\xi_2}$, for all $b\in\B$, are representable and continuous on $(\A[\|\cdot\|_{\scriptscriptstyle\A}],\Ao)$ and $(\B[\|\cdot\|_{\scriptscriptstyle\B}],\Bo)$, {respectively}. Moreover, their tensor product $\omega_1\otimes\omega_2$ is a continuous and representable {linear} functional on $\A\otimes_{\scriptscriptstyle\bar{n}}\B$, represented by the *-representation $\pi_1\otimes\pi_2$.
\end{prop}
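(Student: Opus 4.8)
The plan is to reduce the entire statement to the vector-functional argument already carried out in Proposition \ref{repr_func}, applied first to the two factor representations and then to the tensor representation $\pi_1 \otimes \pi_2$.

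For the first assertion there is essentially nothing new: since $\pi_1$ is a continuous *-representation of the Banach quasi *-algebra $(\A[\|\cdot\|_{\scriptscriptstyle\A}],\Ao)$ and $\xi_1 \in \D_{\scriptscriptstyle \pi_1}$, the functional $\omega_1(a) = \ip{\pi_1(a)\xi_1}{\xi_1}$ is exactly of the form treated in Proposition \ref{repr_func}. Conditions (L.1)--(L.3) of Definition \ref{27} therefore follow verbatim --- (L.1) from $\omega_1(x^*x) = \|\pi_1(x)\xi_1\|^2 \geq 0$, (L.2) from $\pi_1(a^*) = \pi_1(a)\ad$ together with the multiplier property in Definition \ref{26}(ii), and (L.3) from Cauchy--Schwarz with $\gamma_a = \|\pi_1(a)\xi_1\|$ --- while continuity follows from the weak continuity of $\pi_1$, which yields a bound $|\omega_1(a)| \leq \gamma_{\xi_1}\|a\|_{\scriptscriptstyle\A}$. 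The identical argument handles $\omega_2$.

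The core of the proposition is the statement about $\omega_1 \otimes \omega_2$, and the key observation is that this tensor functional is itself a single vector functional of the tensor representation. Using the construction of $\pi_1 \otimes \pi_2$ recalled just before Proposition \ref{cont_repr} and the inner product on $\Hil_{\scriptscriptstyle \pi_1}\widehat{\otimes}_{\scriptscriptstyle h}\Hil_{\scriptscriptstyle \pi_2}$ (Example \ref{exa3}), for any $c = \sum_{i} a_i \otimes b_i$ I would compute
\begin{align*}
\ip{(\pi_1\otimes\pi_2)(c)(\xi_1\otimes\xi_2)}{\xi_1\otimes\xi_2} &= \sum_{i}\ip{\pi_1(a_i)\xi_1}{\xi_1}\,\ip{\pi_2(b_i)\xi_2}{\xi_2}\\ &= \sum_{i}\omega_1(a_i)\,\omega_2(b_i) = (\omega_1\otimes\omega_2)(c).
\end{align*}
This exhibits $\omega_1 \otimes \omega_2$ as the vector functional of the *-representation $\pi_1 \otimes \pi_2$ attached to $\xi_1 \otimes \xi_2 \in \D_{\scriptscriptstyle \pi_1}\otimes\D_{\scriptscriptstyle \pi_2}$, which is precisely what ``represented by $\pi_1 \otimes \pi_2$'' should mean.

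With this identification secured, I would invoke Proposition \ref{cont_repr}, which guarantees that $\pi_1 \otimes \pi_2$ is a \emph{continuous} *-representation of $\A \otimes_{\scriptscriptstyle\bar{n}} \B$, and then rerun the argument of Proposition \ref{repr_func}, this time for the representation $\pi_1 \otimes \pi_2$ and the vector $\xi_1 \otimes \xi_2$, to conclude that $\omega_1 \otimes \omega_2$ satisfies (L.1)--(L.3) and is norm-continuous. The step I expect to need the most care is verifying (L.3) for $\omega_1 \otimes \omega_2$: unlike the factor case, it requires the adjoint relation $(\pi_1\otimes\pi_2)(c^*) = \big((\pi_1\otimes\pi_2)(c)\big)\ad$ and the multiplier identity $(\pi_1\otimes\pi_2)(cz) = (\pi_1\otimes\pi_2)(c)\wmult(\pi_1\otimes\pi_2)(z)$ for $c \in \A\otimes\B$, $z \in \Ao\otimes\Bo$, so that the Cauchy--Schwarz bound driving (L.3) still applies. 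These facts are exactly the ones established for $\pi_1 \otimes \pi_2$ in the paragraph preceding Proposition \ref{cont_repr}, so once they are cited the estimate goes through verbatim.
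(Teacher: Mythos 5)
Your proof is correct and follows essentially the same route as the paper: both arguments reduce the first assertion to Proposition \ref{repr_func}, identify $\omega_1\otimes\omega_2$ with the vector functional of $\pi_1\otimes\pi_2$ at the vector $\xi_1\otimes\xi_2$ via the same computation, and derive continuity of the functional from the continuity of $\pi_1\otimes\pi_2$ given by Proposition \ref{cont_repr}. The only (immaterial) divergence is the final step: where you rerun the (L.1)--(L.3) Cauchy--Schwarz verification for $\pi_1\otimes\pi_2$, using the adjoint and multiplier identities established in the discussion preceding Proposition \ref{cont_repr}, the paper instead cites \cite[Proposition 2.9]{adatra}, which packages exactly that argument.
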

\begin{proof}
{From Proposition \ref{repr_func}, the linear functionals} $\omega_1$ and $\omega_2$ are representable and continuous on the Banach quasi *-algebras $(\A[\|\cdot\|_{\scriptscriptstyle\A}],\Ao)$ and $(\B[\|\cdot\|_{\scriptscriptstyle\B}],\Bo)$, respectively and their tensor product is uniquely defined as 
$$(\omega_1\otimes\omega_2)(c):=\sum_{i=1}^n\omega_1(a_i)\omega_2(b_i),\quad \forall \ c=\sum_{i=1}^na_i\otimes b_i\in\A\otimes_{\scriptscriptstyle\bar{n}}\B.$$
We show that for some $\xi_1\otimes\xi_2\in\D_{\scriptscriptstyle \pi_1}\otimes\D_{\scriptscriptstyle \pi_2}$ we have
$$(\omega_1\otimes\omega_2)(c)=\ip{(\pi_1\otimes\pi_2)(c)(\xi_1\otimes\xi_2)}{\xi_1\otimes\xi_2},$$
for all $c=\sum_{i=1}^na_i\otimes b_i\in\A\otimes_{\scriptscriptstyle\bar{n}}\B$. Indeed,
\begin{align*}
(\omega_1\otimes\omega_2)\left(\sum_{i=1}^na_i\otimes b_i\right)&=\sum_{i=1}^n\omega_1(a_i)\omega_2(b_i)\\
&=\sum_{i=1}^n\ip{\pi_1(a_i)\xi_1}{\xi_1}\ip{\pi_2(b_i)\xi_2}{\xi_2}\\
&=\ip{(\pi_1\otimes\pi_2)(c)(\xi_1\otimes\xi_2)}{\xi_1\otimes\xi_2},
\end{align*}
{where the *-representation $\pi_1\otimes\pi_2$ is continuous from Proposition \ref{cont_repr} and this implies that the linear functional $\omega_1\otimes \omega_2$ is continuous too. Since $\omega_1\otimes \omega_2$ is represented by $\pi_1\otimes \pi_2$, its representability can be deduced from Proposition 2.9 of \cite{adatra}.}
\end{proof}

Notice that, {\em in Proposition \ref{66_review}, $\pi_1\otimes\pi_2$ may be not the *-representation obtained by the GNS-like triple} (see, \cite[Theorem 2.4.8]{trafra}) {\em for the linear functional} $\omega_1\otimes\omega_2$.

\begin{rem} \label{67}
	In Proposition \ref{66_review}, since $\omega_1\otimes\omega_2$ is continuous, we can consider its extension $\omega_1\widehat{\otimes}\omega_2$ to $\A\widehat{\otimes}_{\scriptscriptstyle\bar{n}}\B$. {\em We don't know whether the aforementioned extension is still representable}. However in \cite{adams, adams2} {\em it has been proved that this is the case if $(\A,\Ao)$ and $(\B,\Bo)$ are both Hilbert quasi *-algebras}. {\em Later on} (Proposition \ref{tensor}), {\em we shall show other cases} in the Banach quasi *-algebras framework {\em in which this extension is representable}.
\end{rem}

\begin{rem}\label{68}
If $\omega_1$ and $\omega_2$ are representable linear
functionals {on the quasi *-algebras} $(\A,\Ao)$ and $(\B,\Bo)$ respectively, then we can consider the tensor product of their GNS *-representations $\pi_{\omega_1}$ and $\pi_{\omega_2}$, denoted by $\pi_{\omega_1}\otimes\pi_{\omega_2}$. {\em The tensor product $\omega_1\otimes\omega_2$ is represented by $\pi_{\omega_1}\otimes\pi_{\omega_2}$, hence it is representable on} $\A\otimes\B$ {(see \cite[Proposition 2.9]{adatra})}. {\em However, it remains unclear if $\pi_{\omega_1}\otimes\pi_{\omega_2}$ is unitary equivalent to} $\pi_{\omega_1\otimes\omega_2}$.
\end{rem}

\subsection{*-Semisimplicity and full representability} We now want to investigate how *-semisimplicity and full representability behave with the construction of a tensor product normed, respectively Banach quasi *-algebra. For the normed case, we have to assume some extra properties on the considered topological quasi *-algebras (see Theorems \ref{SS} and \ref{pr_6.9}). For the Banach case, the question remains, at the moment, open.
\smallskip

\begin{prop} \label{pr_6.3} Let $(\A[\|\cdot\|_{\scriptscriptstyle\A}],\Ao)$ and $(\B[\|\cdot\|_{\scriptscriptstyle\B}],\Bo)$ be Banach quasi *-algebras. Let  $\A\widehat{\otimes}_{\scriptscriptstyle\bar{n}} \B$ be their tensor product Banach quasi *-algebra for a uniform cross-norm $\bar{n}$.
\begin{itemize}
	\item If $\Omega$ is a representable and continuous linear functional on $\A\widehat{\otimes}_{\scriptscriptstyle\bar{n}}\B$, then the linear functionals  $\omega_1(a):=\Omega(a\otimes\id_{\scriptscriptstyle\B})$, {$a \in \A$}, $\omega_2(b) :=\Omega(\id_{\scriptscriptstyle\A}\otimes b)$, {$b \in \B$,} are representable and continuous on $(\A[\|\cdot\|_{\scriptscriptstyle\A}], \Ao)$ and $(\B[\|\cdot\|_{\scriptscriptstyle\B}], \Bo)$, respectively.
	\item If $\Phi$ is an element in $\mathcal{S}_{\Ao\otimes\Bo}(\A\widehat{\otimes}_{\scriptscriptstyle\bar{n}}\B)$, then $\phi_1(a_1,a_2):=\Phi(a_1\otimes\id_{\scriptscriptstyle\B},a_2\otimes\id_{\scriptscriptstyle\B})$ is in $\SSA$, {for any $a_1, a_2 \in \A$} and $\phi_2(b_1,b_2):=\Phi(\id_{\scriptscriptstyle\A}\otimes b_1,\id_{\scriptscriptstyle\A}\otimes b_2)$ is in $\mathcal{S}_{\Bo}(\B)$, {for any $b_1, b_2 \in \B$}.
\end{itemize}
\end{prop}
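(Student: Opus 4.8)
The plan is to deduce both statements from the isometric $*$-embeddings of Remark \ref{iso}, namely $\A[\|\cdot\|_{\scriptscriptstyle\A}]\to\A\otimes_{\scriptscriptstyle\bar n}\B$, $a\mapsto a\otimes\id_{\scriptscriptstyle\B}$, and $\B[\|\cdot\|_{\scriptscriptstyle\B}]\to\A\otimes_{\scriptscriptstyle\bar n}\B$, $b\mapsto\id_{\scriptscriptstyle\A}\otimes b$, by pulling the defining conditions of $\Omega$ (resp.\ $\Phi$) back along the first embedding; the statements for $\omega_2$ and $\phi_2$ then follow verbatim through the second one. Throughout I would use the elementary identities, valid in the tensor product quasi *-algebra of Section 3, that $(a\otimes\id_{\scriptscriptstyle\B})^*=a^*\otimes\id_{\scriptscriptstyle\B}$ and $(ax)\otimes\id_{\scriptscriptstyle\B}=(a\otimes\id_{\scriptscriptstyle\B})(x\otimes\id_{\scriptscriptstyle\B})$ for $a\in\A$, $x\in\Ao$ (and in particular $(x^*x)\otimes\id_{\scriptscriptstyle\B}=(x\otimes\id_{\scriptscriptstyle\B})^*(x\otimes\id_{\scriptscriptstyle\B})$), all resting on $\id_{\scriptscriptstyle\B}=\id_{\scriptscriptstyle\B}^*$ and $\id_{\scriptscriptstyle\B}\id_{\scriptscriptstyle\B}=\id_{\scriptscriptstyle\B}$. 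The crucial bookkeeping is that $a\otimes\id_{\scriptscriptstyle\B}\in\A\otimes\B$ plays the role of the ``big'' element while $x\otimes\id_{\scriptscriptstyle\B}\in\Ao\otimes\Bo$ plays the role of the distinguished one, so these are exactly the outer multiplications of $(\A\otimes\B,\Ao\otimes\Bo)$.

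For the first bullet I would verify conditions (L.1)--(L.3) of Definition \ref{27} for $\omega_1$. Condition (L.1) is immediate, since $\omega_1(x^*x)=\Omega\big((x\otimes\id_{\scriptscriptstyle\B})^*(x\otimes\id_{\scriptscriptstyle\B})\big)\geq0$ by (L.1) for $\Omega$. For (L.2), I rewrite $(y^*a^*x)\otimes\id_{\scriptscriptstyle\B}=(y\otimes\id_{\scriptscriptstyle\B})^*(a\otimes\id_{\scriptscriptstyle\B})^*(x\otimes\id_{\scriptscriptstyle\B})$ and apply (L.2) for $\Omega$ with big-space element $a\otimes\id_{\scriptscriptstyle\B}$ and distinguished elements $x\otimes\id_{\scriptscriptstyle\B},y\otimes\id_{\scriptscriptstyle\B}\in\Ao\otimes\Bo$, obtaining $\omega_1(y^*a^*x)=\overline{\omega_1(x^*ay)}$. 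For (L.3), applying (L.3) for $\Omega$ to $a\otimes\id_{\scriptscriptstyle\B}$ and to $x\otimes\id_{\scriptscriptstyle\B}$ gives $|\omega_1(a^*x)|=\big|\Omega\big((a\otimes\id_{\scriptscriptstyle\B})^*(x\otimes\id_{\scriptscriptstyle\B})\big)\big|\leq\gamma\,\omega_1(x^*x)^{1/2}$ with $\gamma=\gamma_{a\otimes\id_{\scriptscriptstyle\B}}$. Continuity of $\omega_1$ follows because it is the composite of the isometry of Remark \ref{iso} with the continuous $\Omega$, so $|\omega_1(a)|\leq C\|a\otimes\id_{\scriptscriptstyle\B}\|_{\scriptscriptstyle\bar n}=C\|a\|_{\scriptscriptstyle\A}$ for the continuity constant $C$ of $\Omega$.

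For the second bullet I would check that $\phi_1$ satisfies conditions (i) and (ii) defining $\QA$ together with the cross-norm bound of $\SSA$. Positivity (i) is immediate from $\phi_1(a,a)=\Phi(a\otimes\id_{\scriptscriptstyle\B},a\otimes\id_{\scriptscriptstyle\B})\geq0$. For (ii), taking $a\in\A$ and $x,y\in\Ao$ and using $(ax)\otimes\id_{\scriptscriptstyle\B}=(a\otimes\id_{\scriptscriptstyle\B})(x\otimes\id_{\scriptscriptstyle\B})$, condition (ii) of $\QA$ for $\Phi$ with big element $z=a\otimes\id_{\scriptscriptstyle\B}$ and distinguished elements $x\otimes\id_{\scriptscriptstyle\B},y\otimes\id_{\scriptscriptstyle\B}\in\Ao\otimes\Bo$ gives $\phi_1(ax,y)=\Phi\big((a\otimes\id_{\scriptscriptstyle\B})(x\otimes\id_{\scriptscriptstyle\B}),\,y\otimes\id_{\scriptscriptstyle\B}\big)=\Phi\big(x\otimes\id_{\scriptscriptstyle\B},\,(a\otimes\id_{\scriptscriptstyle\B})^*(y\otimes\id_{\scriptscriptstyle\B})\big)=\phi_1(x,a^*y)$, i.e.\ the required $\varphi(ax,y)=\varphi(x,a^*y)$. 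Finally $|\phi_1(a_1,a_2)|\leq\|a_1\otimes\id_{\scriptscriptstyle\B}\|_{\scriptscriptstyle\bar n}\|a_2\otimes\id_{\scriptscriptstyle\B}\|_{\scriptscriptstyle\bar n}=\|a_1\|_{\scriptscriptstyle\A}\|a_2\|_{\scriptscriptstyle\A}$, using $\|\Phi\|\leq1$ and the isometry of Remark \ref{iso}; hence $\phi_1\in\SSA$.

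The computations themselves are mechanical once the above identities are granted, so the only genuine obstacle I anticipate is justifying those product identities across the two layers of the structure: that $(a\otimes\id_{\scriptscriptstyle\B})(x\otimes\id_{\scriptscriptstyle\B})$, with $a\otimes\id_{\scriptscriptstyle\B}\in\A\otimes\B$ and $x\otimes\id_{\scriptscriptstyle\B}\in\Ao\otimes\Bo$, indeed equals $(ax)\otimes\id_{\scriptscriptstyle\B}$ under the outer multiplications introduced in Section 3, and that these behave well on the dense subspace $\A\otimes\B$ on which $\Omega$ and $\Phi$ are being evaluated. Granting this, the statements for $\omega_2$ and $\phi_2$ are obtained by the identical argument with the embedding $b\mapsto\id_{\scriptscriptstyle\A}\otimes b$ and the roles of the two tensor factors interchanged.
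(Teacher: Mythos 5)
Your proposal is correct and follows essentially the same route as the paper: the paper's proof simply invokes the isometric *-isomorphism of Remark \ref{iso} to conclude that $\omega_1,\omega_2$ and $\phi_1,\phi_2$ inherit representability, continuity, and the $\SSA$-conditions as restrictions of $\Omega$ and $\Phi$ along the embeddings $a\mapsto a\otimes\id_{\scriptscriptstyle\B}$ and $b\mapsto\id_{\scriptscriptstyle\A}\otimes b$. You merely spell out explicitly the verification of (L.1)--(L.3) and of the conditions defining $\SSA$ (together with the product identities $(ax)\otimes\id_{\scriptscriptstyle\B}=(a\otimes\id_{\scriptscriptstyle\B})(x\otimes\id_{\scriptscriptstyle\B})$, etc.) that the paper leaves implicit in its appeal to Remark \ref{iso}.
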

\begin{proof}
Let $\Omega$ {and $\omega_1$ be as above}.
By Remark \ref{iso}, $\A[\|\cdot\|_{\scriptscriptstyle\A}] = \A\otimes_{\bar{n}} \{e_{\scriptscriptstyle\A}\}$, with respect to a *-isometric isomorphism, therefore $\omega_1$ is continuous and representable on $\A[\|\cdot\|_{\scriptscriptstyle\A}]$.
Analogously, {the same is true for $\omega_2$.}

Consider now $\Phi\in\mathcal{S}_{\Ao\otimes\Bo}(\A\widehat{\otimes}_{\scriptscriptstyle\bar{n}}\B)$ and define
$$\phi_1(a_1,a_2):=\Phi(a_1\otimes e_{\scriptscriptstyle\B}, a_2\otimes e_{\scriptscriptstyle\B}), \quad \forall \ a_1,a_2\in\A,$$
$$\phi_2(b_1,b_2):=\Phi(e_{\scriptscriptstyle\A}\otimes b_1, e_{\scriptscriptstyle\A}\otimes b_2), \quad \forall \ b_1,b_2\in\B.$$

Then, again by Remark \ref{iso}, $\phi_1\in\SSA$ and $\phi_2\in\mathcal{S}_{\Bo}(\B)$ as restrictions of $\Phi$ on $\A\times \A$ and $\B\times \B$ respectively.
\end{proof}
We would like to know whether the two assertions of Proposition 
\ref{pr_6.3} have a kind of converse. More precisely, If $\omega_1\in\mathcal{R}_c(\A,\Ao)$, $\omega_2\in\mathcal{R}_c(\B,\Bo)$, then their tensor product $\omega_1\otimes\omega_2$, defined as
$$(\omega_1\otimes\omega_2)\left(\sum_{i=1}^na_i\otimes b_i\right):=\sum_{i=1}^n\omega_1(a_i)\omega_2(b_i),\quad \forall \
 \sum_{i=1}^na_i\otimes b_i\in\A\otimes\B,$$
is a well-defined linear functional on $\A\otimes_{\scriptscriptstyle\bar{n}}\B$. Moreover, it is continuous by the uniform cross-norm property and representable by Remark \ref{68}. Thus , $\omega_1\otimes\omega_2\in\mathcal{R}_c(\A\otimes_{\scriptscriptstyle\bar{n}}\B,\Ao\otimes\Bo)$. Concerning its extension to $\A\widehat{\otimes}_{\scriptscriptstyle\bar{n}}\B$, see Remark \ref{67}.

The situation is different when considering $\phi_1\in\SSA$, $\phi_2\in\mathcal{S}_{\Bo}(\B)$. Indeed, their tensor product $\phi_1\otimes\phi_2$, defined as
\begin{align} \label{fi1fi2}
(\phi_1\otimes\phi_2)(c,c'):=\sum_{i,j=1}^n\phi_1(a_i,c_j)\phi_2(b_i,d_j).
\end{align}
for any $c=\sum_{i=1}^na_i\otimes b_i$, $c'=\sum_{j=1}^nc_j\otimes d_j$ in $\A\otimes_{\scriptscriptstyle\bar{n}}\B$, is a sesquilinear form, but only \textit{separately continuous}. {\em To get} its {\em continuity, we} have to {\em assume an extra condition} (see Proposition \ref{tensor}).  Moreover, as in the case of  continuous and representable linear functionals, discussed above, if $(\A,\Ao)$ and $(\B,\Bo)$ are Hilbert quasi *-algebras, then for $\phi_1\in\SSA$ and $\phi_2\in\mathcal{S}_{\Bo}(\B)$, one has that $\phi_1\widehat{\otimes}\phi_2$ belongs to $\mathcal{S}_{\Ao\otimes\Bo}(\A\widehat{\otimes}_{\scriptscriptstyle h}\B)$; see \cite{adams2}.
\medskip
	
$\blacktriangleright$ {\em In the sequel}, {\em we shall assume barrelledness} (cf. \cite[Definition 4.1.1]{perez}) {\em for} our normed space $\A\otimes_{\scriptscriptstyle\bar{n}}\B$. Note that all {\em Banach spaces are barrelled}, so by \cite[Corollary 11.3.8]{perez}, $\A\otimes_{\scriptscriptstyle\bar{n}}\B$ {\em is barrelled when} $\bar{n}=\gamma$.
\begin{prop}\label{tensor}
	Let $(\A[\|\cdot\|_{\scriptscriptstyle\A}],\Ao)$, $(\B[\|\cdot\|_{\scriptscriptstyle\B}],\Bo)$ be Banach quasi *-algebras. Let $\bar{n}$ be a uniform cross-norm, such that $\A\otimes_{\scriptscriptstyle\bar{n}}\B$ is a barrelled normed space. {The following hold:}
	\begin{enumerate}
		\item if $\phi_1\in\SSA$, $\phi_2\in\mathcal{S}_{\Bo}(\B)$, then $\phi'=(\phi_1\otimes\phi_2)/\|\phi_1\otimes\phi_2\|$ can be extended to an element $\Phi\in\mathcal{S}_{\Ao\otimes\Bo}(\A\widehat{\otimes}_{\scriptscriptstyle\bar{n}}\B)$;
		\item if $\omega_1\in\mathcal{R}_c(\A,\Ao)$,  $\omega_2\in\mathcal{R}_c(\B,\Bo)$, then $\omega_1\otimes\omega_2$ can be extended to an element of $\mathcal{R}_c(\A\widehat{\otimes}_{\scriptscriptstyle\bar{n}}\B,\Ao\otimes\Bo)$.
	\end{enumerate}
\end{prop}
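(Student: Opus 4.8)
The plan is to prove (1) directly and then deduce (2) from it.

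For (1), I would begin with the form $\phi_1\otimes\phi_2$ on $\A\otimes\B$ of \eqref{fi1fi2} and first make its separate continuity quantitative: fixing $c'=\sum_j c_j\otimes d_j$, the map $c\mapsto(\phi_1\otimes\phi_2)(c,c')$ is the finite sum $\sum_j f_j\otimes g_j$, where $f_j(a):=\phi_1(a,c_j)\in\A^*$ and $g_j(b):=\phi_2(b,d_j)\in\B^*$ are bounded because $\phi_1\in\SSA$ and $\phi_2\in\mathcal{S}_{\Bo}(\B)$. Each $f_j\otimes g_j$ is $\|\cdot\|_\lambda$-continuous, hence $\|\cdot\|_{\scriptscriptstyle\bar{n}}$-continuous since $\|\cdot\|_\lambda\le\|\cdot\|_{\scriptscriptstyle\bar{n}}$ by \eqref{123}; symmetrically in the second slot. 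This yields separate $\|\cdot\|_{\scriptscriptstyle\bar{n}}$-continuity of $\phi_1\otimes\phi_2$.

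Next I would invoke the barrelledness hypothesis: a separately continuous sesquilinear form on the product of a barrelled space with a metrizable space is jointly continuous (reduce to the bilinear case by conjugating the second variable; cf. the barrelledness theory in \cite{perez}), and $\A\otimes_{\scriptscriptstyle\bar{n}}\B$ is barrelled and normed. Joint continuity makes $\|\phi_1\otimes\phi_2\|$ finite, so $\phi'=(\phi_1\otimes\phi_2)/\|\phi_1\otimes\phi_2\|$ satisfies $|\phi'(c,c')|\le\|c\|_{\scriptscriptstyle\bar{n}}\|c'\|_{\scriptscriptstyle\bar{n}}$ and extends uniquely, by uniform continuity, to a bounded sesquilinear form $\Phi$ on $\A\widehat{\otimes}_{\scriptscriptstyle\bar{n}}\B$. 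To place $\Phi$ in $\mathcal{S}_{\Ao\otimes\Bo}(\A\widehat{\otimes}_{\scriptscriptstyle\bar{n}}\B)$ I would check the defining properties: positivity $(\phi_1\otimes\phi_2)(c,c)=\sum_{i,j}\phi_1(a_i,a_j)\phi_2(b_i,b_j)\ge0$ follows from the Schur product theorem, since $[\phi_1(a_i,a_j)]$ and $[\phi_2(b_i,b_j)]$ are positive semidefinite and the sum of all entries of their Hadamard product is nonnegative; the covariance $\Phi(Cz,w)=\Phi(z,C^*w)$ for $z,w\in\Ao\otimes\Bo$ is verified on elementary tensors using property (ii) for $\phi_1$ and $\phi_2$ and then extended by continuity in $C$; and the cross-norm bound is built into $\phi'$. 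Positivity passes to $\A\widehat{\otimes}_{\scriptscriptstyle\bar{n}}\B$ by density and continuity.

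For (2), I would route the argument through the forms attached to $\omega_1,\omega_2$ by \eqref{Omega}. Since each $\omega_i$ is continuous and representable on a Banach quasi *-algebra, the form $\Omega^{\omega_i}$ is positive, lies in $\QA$ (resp.\ $\mathcal{Q}_{\Bo}(\B)$), and satisfies $\Omega^{\omega_i}(\cdot,e)=\omega_i$; it coincides with the closed form $\overline{\varphi}_{\omega_i}$, which for a Banach quasi *-algebra has full domain, so $\Omega^{\omega_i}$ is bounded by the closed graph theorem. Applying the argument of (1) to $\Omega^{\omega_1}\otimes\Omega^{\omega_2}$ (separate continuity, then barrelledness, then extension by continuity) produces a bounded positive form $\Phi\in\mathcal{Q}_{\Ao\otimes\Bo}(\A\widehat{\otimes}_{\scriptscriptstyle\bar{n}}\B)$ extending $\Omega^{\omega_1}\otimes\Omega^{\omega_2}$. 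I would then set
\[
\Omega(C):=\Phi(C,e_{\scriptscriptstyle\A}\otimes e_{\scriptscriptstyle\B}),\qquad C\in\A\widehat{\otimes}_{\scriptscriptstyle\bar{n}}\B .
\]
On $\A\otimes\B$ this equals $(\Omega^{\omega_1}\otimes\Omega^{\omega_2})(\cdot,e_{\scriptscriptstyle\A}\otimes e_{\scriptscriptstyle\B})=\omega_1\otimes\omega_2$, so $\Omega$ extends $\omega_1\otimes\omega_2$; and $\Omega$ is continuous and representable because, for the bounded positive form $\Phi$, condition (L.3) of Definition \ref{27} is precisely the Cauchy--Schwarz inequality for $\Phi$, while (L.1) and (L.2) follow from property (ii) and the Hermitian symmetry of $\Phi$. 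I expect the principal difficulty to be the passage from separate to joint continuity in (1); this is exactly what barrelledness buys and why it is imposed (and why, as flagged in Remark \ref{67}, the merely normed case needs such an extra assumption). In (2) the sensitive point is condition (L.3) on the completion, where one cannot simply let $c_n\to C$ and control the constants $\gamma_{c_n}$; writing $\Omega$ as $\Phi(\cdot,e_{\scriptscriptstyle\A}\otimes e_{\scriptscriptstyle\B})$ for a single bounded positive form $\Phi$ sidesteps this, as (L.3) then reduces to Cauchy--Schwarz.
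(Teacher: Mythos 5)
Your proposal is correct and follows the same overall architecture as the paper's proof: establish separate $\|\cdot\|_{\scriptscriptstyle\bar{n}}$-continuity of $\phi_1\otimes\phi_2$, upgrade to joint continuity via the barrelledness hypothesis and the classical theorem on separately continuous bilinear maps, extend to the completion and normalize; then obtain (2) by tensoring the bounded, everywhere-defined forms attached to $\omega_1,\omega_2$, running the machinery of (1), and evaluating the resulting form against $\id_{\scriptscriptstyle\A}\otimes\id_{\scriptscriptstyle\B}$, checking agreement with $\omega_1\otimes\omega_2$ on the dense subspace. Two local steps differ, both to your credit. For positivity the paper writes $(\phi_1\otimes\phi_2)(c,c)$ as $\|(\pi_{\omega_{\phi_1}}\otimes\pi_{\omega_{\phi_2}})(c)(\xi_{\omega_{\phi_1}}\otimes\xi_{\omega_{\phi_2}})\|^2$ via the GNS representations of the functionals associated to $\phi_1,\phi_2$, whereas your Schur-product argument (Hadamard product of the positive semidefinite matrices $[\phi_1(a_i,a_j)]$, $[\phi_2(b_i,b_j)]$, then sum all entries) is purely algebraic and avoids invoking GNS theory altogether. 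For separate continuity the paper uses operators $T_1:\A\to\A^*$, $T_2:\B\to\B^*$ together with uniformity of $\bar{n}$, while you use the dual-functional description and the bound $\|\cdot\|_\lambda\leq\|\cdot\|_{\scriptscriptstyle\bar{n}}$ from \eqref{123}; these are essentially equivalent. Finally, in (2) the paper is terse about why the extension is representable (it only notes the extended form lies in $\mathcal{Q}_{\Ao\otimes\Bo}(\A\widehat{\otimes}_{\scriptscriptstyle\bar{n}}\B)$ and is continuous); your observation that (L.3) for $\Omega=\Phi(\cdot,\id_{\scriptscriptstyle\A}\otimes\id_{\scriptscriptstyle\B})$ reduces, via property (ii) of $\Phi$, to the Cauchy--Schwarz inequality $|\Phi(Z,C)|\leq\Phi(Z,Z)^{1/2}\Phi(C,C)^{1/2}$ makes explicit exactly the point the paper leaves implicit, and correctly identifies why the constants $\gamma$ can be controlled on the completion.
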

\begin{proof}
(1) Let $0\neq\phi_1\in\SSA$, $0\neq\phi_2\in\mathcal{S}_{\Bo}(\B)$ be given. Then by \eqref{fi1fi2} the map $\phi_1\otimes\phi_2$ is non-zero and well-defined. We show that it is positive, i.e., for $c=\sum_{i=1}^na_i\otimes b_i\in\A\otimes_{\scriptscriptstyle\bar{n}}\B$, one has $(\phi_1\otimes\phi_2)(c,c)\geq0.$ Indeed, by \cite[Proposition 2.9]{adatra} (see also \eqref{Omega} and proofs of \cite[Proposition 3.6 and Theorem 3.9]{adatra}, we have 
 $$\phi_1(a_i,a_j)=\ip{\pi_{\omega_{\phi_1}}(a_i)\xi_{\omega_{\phi_1}}}{\pi_{\omega_{\phi_1}}(a_j)\xi_{\omega_{\phi_1}}},$$ 
 $$\phi_2(b_i,b_j)=\ip{\pi_{\omega_{\phi_2}}(b_i)\allowbreak\xi_{\omega_{\phi_2}}}{\pi_{\omega_{\phi_2}}(b_j)\xi_{\omega_{\phi_2}}},$$
 for all $i,j=1,\ldots,n$, where $\pi_{\omega_{\phi_1}}$ and $\pi_{\omega_{\phi_2}}$ are the GNS *-representations associated to $\omega_{\phi_1}$ and $\omega_{\phi_2}$, respectively. Therefore,
{\begin{align*}
&(\phi_1\otimes\phi_2)(c,c)=\sum_{i,j=1}^n\phi_1(a_i,a_j)\phi_2(b_i,b_j)\\
&=\sum_{i,j=1}^n\ip{\pi_{\omega_{\phi_1}}(a_i)\xi_{\omega_{\phi_1}}}{\pi_{\omega_{\phi_1}}(a_j)\xi_{\omega_{\phi_1}}}\ip{\pi_{\omega_{\phi_2}}(b_i)\xi_{\omega_{\phi_2}}}{\pi_{\omega_{\phi_2}}(b_j)\xi_{\omega_{\phi_2}}}\\
&=\sum_{i,j=1}^n\ip{(\pi_{\omega_{\phi_1}}\otimes\pi_{\omega_{\phi_2}})(a_i\otimes b_i)(\xi_{\omega_{\phi_1}}\otimes\xi_{\omega_{\phi_2}})}{(\pi_{\omega_{\phi_1}}\otimes\pi_{\omega_{\phi_2}})(a_j\otimes b_j)(\xi_{\omega_{\phi_1}}\otimes\xi_{\omega_{\phi_2}})}\\
&=\|(\pi_{\omega_{\phi_1}}\otimes\pi_{\omega_{\phi_2}})(c)(\xi_{\omega_{\phi_1}}\otimes\xi_{\omega_{\phi_2}})\|^2\geq0
\end{align*}
}
Moreover, by the properties of $\phi_1$, $\phi_2$, we obtain
$$(\phi_1\otimes\phi_2)(cz,z')=(\phi_1\otimes\phi_2)(z,c^*z'),\quad \forall \ c\in\A\otimes_{\scriptscriptstyle\bar{n}}\B,z,z'\in\Ao\otimes\Bo.$$
What remains to be shown is that $\phi_1\otimes\phi_2$ is continuous. For this aim, we associate an operator $T_1:\A\to\A^*$, where $\A^*$ is the topological dual of $\A$, to $\phi_1$, in the following way
$$T_1:\A\to\A^*,\quad T_1(a)=\phi_1(\cdot,a), {\quad \forall \ a \in \A},$$
where $\phi_1(\cdot,a)$ is a continuous linear functional on $\A[\|\cdot\|_{\scriptscriptstyle\A}]$ defined as $\phi_1(a_1,a)$, for every $a_1\in\A$. Similarly, we can define $T_2:\B\to\B^*$. Then for $c, c'$ as above we have
\begin{align*}\left|(\phi_1\otimes\phi_2)(c,c')\right|&=\left|\sum_{i=1}^n\sum_{j=1}^m\phi_1(a_i,c_j)\phi_2(b_i,d_j)\right|\\
&=\left|\sum_{j=1}^mT_1(c_j)\otimes T_2(d_j)\left(\sum_{i=1}^na_i\otimes b_i\right)\right|\\
&\leq \Bigg\|\sum_{j=1}^mT_1(c_j)\otimes T_2(d_j)\Bigg\|\Bigg\|\sum_{i=1}^na_i\otimes b_i\Bigg\|,
\end{align*}
since $T_1(c_j)$, $T_2(d_j)$ are continuous for $j=1,\ldots,m$ and $\bar{n}$ is a uniform cross-norm.
On the other hand, by the fact that $\phi_1$ and $\phi_2$ are hermitian (i.e., $\Phi_1(a,b) = \overline{\Phi_1(b,a)}, (a,b) \in \A\times \A$, similarly for $\Phi_2$), using the previous inequality, we obtain
$$\left|(\phi_1\otimes\phi_2)(c,c')\right|\leq \Bigg\|\sum_{i=1}^nT_1(a_i)\otimes T_2(b_i)\Bigg\|\Bigg\|\sum_{j=1}^nc_j\otimes d_j\Bigg\|.$$
Thus we have shown that $\phi_1\otimes\phi_2$ is separately continuous on each component.
By hypothesis, $\A\otimes_{\scriptscriptstyle\bar{n}}\B$ is a barrelled normed space and the same is true for its product with itself, hence by \cite[Theorem 1, p.~357]{horv}, $\phi_1\otimes\phi_2:(\A\otimes_{\scriptscriptstyle\bar{n}}\B)\times(\A\otimes_{\scriptscriptstyle\bar{n}}\B)\to\mathbb{C}$ is continuous and thus it can be extended to  $\A\widehat{\otimes}_{\scriptscriptstyle\bar{n}}\B {\times \A\widehat{\otimes}_{\scriptscriptstyle\bar{n}}\B}$, in the following way
$$(\phi_1\widehat{\otimes}\phi_2)(u,v)=\lim_{n\to\infty}(\phi_1\otimes\phi_2)(c_n,d_n),\quad u, v \in\A\widehat{\otimes}_{\scriptscriptstyle\bar{n}}\B,$$
where $\{c_n\}$ and $\{d_n\}$ are sequences in $\A\otimes_{\scriptscriptstyle\bar{n}}\B$, $\|\cdot\|_{\scriptscriptstyle\bar{n}}$-converging to $u$ and $v$, respectively. Notice that $\phi_1\widehat{\otimes}\phi_2$ belongs to $\mathcal{Q}_{\Ao\otimes\Bo}(\A\widehat{\otimes}_{\scriptscriptstyle\bar{n}}\B)$ (see after Definition \ref{27}) and if we consider $\Phi:=(\phi_1\widehat{\otimes}\phi_2)/\|\phi_1\widehat{\otimes}\phi_2\|$, then $\Phi\in\mathcal{S}_{\Ao\otimes\Bo}(\A\widehat{\otimes}_{\scriptscriptstyle\bar{n}}\B)$.
\medskip

(2) Let $\omega_1$, $\omega_2$ be representable and continuous linear functionals on $\A[\|\cdot\|_{\scriptscriptstyle\A}]$ and $\B[\|\cdot\|_{\scriptscriptstyle\B}]$, respectively. Then by \cite[Proposition 3.6]{adatra}, the associated sesquilinear forms $\overline{\varphi}_{\omega_1}$ and $\overline{\varphi}_{\omega_2}$, defined as
$$\overline{\varphi}_{\omega_1}(a,a):=\lim_{n\to\infty}\varphi_{\omega_1}(x_n,x_n),\quad\overline{\varphi}_{\omega_2}(b,b):=\lim_{n\to\infty}\varphi_{\omega_2}(y_n,y_n),$$
whenever $\{x_n\}$ in $\Ao$ and $\{y_n\}$ in $\Bo$ are sequences converging respectively to $a\in\A$ and $b\in\B$, are bounded. Hence, employing the same arguments as in (1), $\overline{\varphi}_{\omega_1}\otimes\overline{\varphi}_{\omega_2}$ is continuous on $\A\otimes_{\scriptscriptstyle\bar{n}}\B{\times \A\otimes_{\scriptscriptstyle\bar{n}}\B}$ and thus it can be extended to the completion $\A\widehat{\otimes}_{\scriptscriptstyle\bar{n}}\B {\times \A\widehat{\otimes}_{\scriptscriptstyle\bar{n}}\B}$. Let us denote this extension as $\overline{\varphi}_{\omega_1}\widehat{\otimes} \;\overline{\varphi}_{\omega_2}$.

Define now the linear functional $\Omega(u):=(\overline{\varphi}_{\omega_1}\widehat{\otimes}\;\overline{\varphi}_{\omega_2})(u,\id_{\scriptscriptstyle\A}\otimes\id_{\scriptscriptstyle\B})$, for $u$ in $\A\widehat{\otimes}_{\scriptscriptstyle\bar{n}}\B$. $\Omega$ is continuous and representable, since as in the proof of (1) $\overline{\varphi}_{\omega_1}\widehat{\otimes}\;\overline{\varphi}_{\omega_2}$ belongs to $\mathcal{Q}_{\Ao\otimes\Bo}(\A\widehat{\otimes}_{\scriptscriptstyle\bar{n}}\B)$ and it is continuous.

Notice that $\omega_1\otimes\omega_2$ is continuous (see comments after Proposition \ref{pr_6.3}), hence it can be extended to $\A\widehat{\otimes}_{\scriptscriptstyle\bar{n}}\B$. We want to show that its {(continuous)} extension $\omega_1\widehat{\otimes}\omega_2$ corresponds to $\Omega$. For this aim, it suffices to show that they agree on {the dense subspace $\A\otimes_{\scriptscriptstyle\bar{n}}\B$ of $\A\widehat{\otimes}_{\scriptscriptstyle\bar{n}}\B$}.

Let $c=\sum_{i=1}^na_i\otimes b_i\in\A\otimes_{\scriptscriptstyle\bar{n}}\B$. Then
\begin{align*}
\Omega(c)&=(\overline{\varphi}_{\omega_1}\otimes\overline{\varphi}_{\omega_2})(c,\id_{\scriptscriptstyle\A}\otimes\id_{\scriptscriptstyle\B})=\sum_{i=1}^n\overline{\varphi}_{\omega_1}(a_i,\id_{\scriptscriptstyle\A})\overline{\varphi}_{\omega_2}(b_i,\id_{\scriptscriptstyle\B})\\
&=\sum_{i=1}^n\omega_1(a_i)\omega_2(b_i)=(\omega_1\otimes\omega_2)(c).
\end{align*}
We conclude that $\Omega=\omega_1\widehat{\otimes}\omega_2$ and therefore $\omega_1\widehat{\otimes}\omega_2$ is representable.
\end{proof}

In the rest of this section we investigate {\em whether full representability and *-semisimplicity} (see Definitions \ref{fully_rep}, \ref{def1}, resp.) {\em of a tensor product normed quasi *-algebra} passes {\em to the normed quasi *-algebras, factors of the tensor product under consideration and vice versa}. Note that full representability is closely related with *-semisimplicity, but also with the existence of faithful continuous *-representations on topological quasi *-algebras \cite[Theorem 7.3]{bfit}. We begin with an answer to the question concerning *-semisimplicity.
\begin{thm} \label{SS} Let $(\A[\|\cdot\|_{\scriptscriptstyle\A}],\Ao)$, $(\B[\|\cdot\|_{\scriptscriptstyle\B}],\Bo)$ be Banach quasi *-algebras and let $\bar{n}$ be a uniform cross-norm. {Consider the following statements}:
	\begin{enumerate}
		\item $(\A\otimes_{\bar{n}}\B,\Ao\otimes\Bo)$ is *-semisimple;
		\item $(\A[\|\cdot\|_{\scriptscriptstyle\A}],\Ao)$ and $(\B[\|\cdot\|_{\scriptscriptstyle\B}],\Bo)$ are *-semisimple;
	\end{enumerate}
{Then {\em (1) $\Rightarrow$ (2)} and when $\A\otimes_{\bar{n}}\B$ is barrelled, one  also has that {\em (2) $\Rightarrow$ (1)}}.
\end{thm}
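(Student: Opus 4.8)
The plan is to prove the two implications separately: the forward one $(1)\Rightarrow(2)$ by a restriction argument that needs no barrelledness, and the converse $(2)\Rightarrow(1)$ by constructing, for each nonzero element of the tensor product, a product sesquilinear form that is strictly positive on it.

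For $(1)\Rightarrow(2)$ I would take $0\neq a\in\A$. Since $\bar{n}$ is a cross-norm and we may assume $\|\id_{\scriptscriptstyle\B}\|=1$, the element $a\otimes\id_{\scriptscriptstyle\B}$ is nonzero, so *-semisimplicity of $\A\otimes_{\scriptscriptstyle\bar{n}}\B$ yields $\Phi\in\mathcal{S}_{\Ao\otimes\Bo}(\A\otimes_{\scriptscriptstyle\bar{n}}\B)$ with $\Phi(a\otimes\id_{\scriptscriptstyle\B},a\otimes\id_{\scriptscriptstyle\B})>0$. The restriction $\phi_1(a_1,a_2):=\Phi(a_1\otimes\id_{\scriptscriptstyle\B},a_2\otimes\id_{\scriptscriptstyle\B})$ belongs to $\SSA$ exactly as in Proposition \ref{pr_6.3} (its restriction argument, via the isometric *-isomorphism of Remark \ref{iso}, is purely algebraic and remains valid on the normed tensor product), and $\phi_1(a,a)>0$. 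Hence $\A$ is *-semisimple; the symmetric choice $\id_{\scriptscriptstyle\A}\otimes b$ handles $\B$.

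For $(2)\Rightarrow(1)$ I assume $\A\otimes_{\scriptscriptstyle\bar{n}}\B$ barrelled and fix $0\neq c=\sum_{i=1}^na_i\otimes b_i$, first rewriting $c$ so that $\{b_1,\dots,b_n\}$ is linearly independent; then $c\neq0$ forces some $a_i\neq0$, say $a_1\neq0$. Using *-semisimplicity of $\A$, I pick $\phi_1\in\SSA$ with $\phi_1(a_1,a_1)>0$ and pass to the associated GNS data as in the proof of Proposition \ref{tensor}, so that with $v_i:=\pi_{\omega_{\phi_1}}(a_i)\xi_{\omega_{\phi_1}}$ one has $\phi_1(a_i,a_j)=\ip{v_i}{v_j}$ and $v_1\neq0$. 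The crux is to choose $\phi_2$ so that the product form does not annihilate $c$. For this I take a maximal linearly independent subset of $\{v_1,\dots,v_n\}$ containing $v_1$ and let $\lambda$ be the coordinate functional on their span with $\lambda(v_1)=1$ and $\lambda=0$ on the remaining basis vectors. Setting $b':=\sum_{i=1}^n\lambda(v_i)\,b_i$, the coefficient of $b_1$ equals $\lambda(v_1)=1$, so $b'\neq0$ by linear independence of the $b_i$. Using *-semisimplicity of $\B$, I pick $\phi_2\in\mathcal{S}_{\Bo}(\B)$ with $\phi_2(b',b')>0$ and set $w_i:=\pi_{\omega_{\phi_2}}(b_i)\xi_{\omega_{\phi_2}}$; applying $\lambda\otimes\mathrm{id}$ to $\sum_iv_i\otimes w_i$ produces $\sum_i\lambda(v_i)w_i=\pi_{\omega_{\phi_2}}(b')\xi_{\omega_{\phi_2}}\neq0$, whence $\sum_iv_i\otimes w_i\neq0$.

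To conclude I would invoke the positivity identity established inside the proof of Proposition \ref{tensor}(1),
\[
(\phi_1\otimes\phi_2)(c,c)=\Big\|\sum_{i=1}^n v_i\otimes w_i\Big\|^2>0,
\]
which in particular shows $\phi_1\otimes\phi_2\neq0$, so the normalizing constant $\|\phi_1\otimes\phi_2\|$ is positive. Since $\A\otimes_{\scriptscriptstyle\bar{n}}\B$ is barrelled, Proposition \ref{tensor}(1) guarantees that $\phi'=(\phi_1\otimes\phi_2)/\|\phi_1\otimes\phi_2\|$ extends to an element of $\mathcal{S}_{\Ao\otimes\Bo}(\A\widehat{\otimes}_{\scriptscriptstyle\bar{n}}\B)$, whose restriction $\Phi$ to $\A\otimes_{\scriptscriptstyle\bar{n}}\B$ lies in $\mathcal{S}_{\Ao\otimes\Bo}(\A\otimes_{\scriptscriptstyle\bar{n}}\B)$ and satisfies $\Phi(c,c)=\phi'(c,c)>0$; as $c$ was an arbitrary nonzero element, $(\A\otimes_{\scriptscriptstyle\bar{n}}\B,\Ao\otimes\Bo)$ is *-semisimple. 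I expect the main obstacle to be precisely this converse: arranging that the (a priori only separately continuous) product form $\phi_1\otimes\phi_2$ be simultaneously strictly positive at $c$ — which is what forces the reduction to linearly independent families and the extraction of the nonzero $b'$ — and genuinely bounded, the latter being exactly where barrelledness enters through Proposition \ref{tensor}(1).
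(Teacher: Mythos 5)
Your proof is correct, and its overall architecture coincides with the paper's: (1)$\Rightarrow$(2) by restricting a form on the tensor product to $\A\times\A$ and $\B\times\B$ via Remark \ref{iso} and Proposition \ref{pr_6.3}, and (2)$\Rightarrow$(1) by tensoring forms on the factors and feeding them into Proposition \ref{tensor}(1), with barrelledness entering only there. But at the decisive point of the converse you do something genuinely different from the paper, and better. The paper, given $0\neq c=\sum_{i=1}^n a_i\otimes b_i$, picks one index $i_0$ with $a_{i_0}\otimes b_{i_0}\neq0$, chooses $\phi_1,\phi_2$ with $\phi_1(a_{i_0},a_{i_0})>0$ and $\phi_2(b_{i_0},b_{i_0})>0$, and then evaluates $\sum_{i,j}\phi_1(a_i,a_j)\phi_2(b_i,b_j)$ as though only diagonal terms occurred; the off-diagonal terms are silently dropped in its displayed computation, and they cannot be dropped. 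Indeed, writing $\phi_1(a_i,a_j)=\ip{v_i}{v_j}$ and $\phi_2(b_i,b_j)=\ip{w_i}{w_j}$ via GNS, the quantity equals $\bigl\|\sum_i v_i\otimes w_i\bigr\|^2$, which can vanish for a nonzero $c$ even when $\phi_1,\phi_2$ satisfy exactly the paper's requirements: take $b_1=b_2=b$ and $\phi_1$ with $v_1=-v_2\neq 0$ while $a_1+a_2\neq0$; then $c=(a_1+a_2)\otimes b\neq0$ but $(\phi_1\otimes\phi_2)(c,c)=\|(v_1+v_2)\otimes w_1\|^2=0$. Your extra steps --- normalizing the representation of $c$ so that $\{b_1,\dots,b_n\}$ is linearly independent, and, crucially, choosing $\phi_2$ strictly positive not at some $b_{i_0}$ but at the auxiliary element $b'=\sum_i\lambda(v_i)b_i$ built from a coordinate functional $\lambda$ on $\mathrm{span}\{v_1,\dots,v_n\}$ --- are precisely what rules out this cancellation, since then $(\lambda\otimes\mathrm{id})\bigl(\sum_i v_i\otimes w_i\bigr)=\pi_{\omega_{\phi_2}}(b')\xi_{\omega_{\phi_2}}\neq0$, forcing $(\phi_1\otimes\phi_2)(c,c)>0$. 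The price is asymmetry: $\phi_2$ must be chosen after, and in terms of, the GNS data of $\phi_1$. What it buys is a watertight proof of (2)$\Rightarrow$(1), whereas the paper's symmetric and independent choice in the two factors does not suffice as written; your version repairs a genuine gap in the paper's own argument. The remaining steps (positivity of $\|\phi_1\otimes\phi_2\|$, the extension granted by barrelledness through Proposition \ref{tensor}(1), and restriction back to $\A\otimes_{\scriptscriptstyle\bar{n}}\B$) agree with the paper's.
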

\begin{proof}
(1) $\Rightarrow$ (2) Let $a\neq0$. We show that $\A[\|\cdot\|_{\scriptscriptstyle\A}]$ is *-semisimple. By Remark \ref{iso}, $a\otimes \id_{\scriptscriptstyle\B}\neq0$. Since $\A\otimes_{\scriptscriptstyle\bar{n}}\B$ is *-semisimple, there exists $\Phi$ in $\mathcal{S}_{\Ao\otimes\Bo}(\A\otimes_{\scriptscriptstyle\bar{n}}\B)$, such that
$\Phi(a\otimes\id_{\scriptscriptstyle\B},a\otimes\id_{\scriptscriptstyle\B})>0$.

By Proposition \ref{pr_6.3}, there exists a sesquilinear form $\phi_1\in\SSA$ defined as restriction of $\Phi$ on $\A\times\A$. Thus we have $$\phi_1(a,a)=\Phi(a\otimes\id_{\scriptscriptstyle\B},a\otimes\id_{\scriptscriptstyle\B})>0, \quad \forall \ a \in \A\setminus \{0\},$$ which implies that $(\A[\|\cdot\|_{\scriptscriptstyle\A}],\Ao)$ is *-semisimple. In the same way, we have that $(\B[\|\cdot\|_{\scriptscriptstyle\B}],\Bo)$ is *-semisimple.

(2) $\Rightarrow$ (1)
 {Suppose now that $\A\otimes_{\bar{n}}\B$ is barrelled}. Let $0 \neq \sum_{i=1}^na_i\otimes b_i \in \A\otimes_{\scriptscriptstyle\bar{n}}\B$. Then for some index $i_0$, $a_{i_0}\otimes b_{i_0}\neq0$, so that $a_{i_0}\neq0$ and $b_{i_0}\neq0$. Since $(\A[\|\cdot\|_{\scriptscriptstyle\A}],\Ao)$ and $(\B[\|\cdot\|_{\scriptscriptstyle\B}],\Bo)$ are *-semisimple, there will exist $\phi_1\in\SSA$ and $\phi_2\in\mathcal{S}_{\Bo}(\B)$, such that $\phi_1(a_{i_0},a_{i_0})>0$ and $\phi_2(b_{i_0},b_{i_0})>0$. Take now $\Phi \in \mathcal{S}_{\Ao\otimes\Bo}(\A\otimes_{\bar{n}}\B)$ defined by the pair $(\phi_1, \phi_2) \in \SSA\times\mathcal{S}_{\Bo}(\B)$, as in Proposition \ref{tensor}(1). Then
 \begin{align*}
\Phi&\left(\sum_{i=1}^na_i\otimes b_i,\sum_{j=1}^na_j\otimes b_j\right)=\gamma_{\phi_1\otimes\phi_2}\sum_{i,j=1}^n\phi_1(a_i,a_j)\phi_2(b_i,b_j)\\
&=\gamma_{\phi_1\otimes\phi_2}\left[\sum_{i\land j\neq i_0}\phi_1(a_i,a_i)\phi_2(b_i,b_i)+\phi_1(a_{i_0},a_{i_0})\phi_2(b_{i_0},b_{i_0})\right]>0,
\end{align*}
where $\gamma_{\phi_1\otimes\phi_2}=\|\phi_1\otimes\phi_2\|>0$. Thus, $(\A\otimes_{\scriptscriptstyle\bar{n}}\B,\Ao\otimes\Bo)$ is *-semisimple.
\end{proof}

The property of *-semisimplicity for a normed quasi *-algebra $\A$ can also be characterized through the existence of faithful ($\|\cdot\|_{\scriptscriptstyle\A}$-$\tau_{s^*}$)-continuous *-representations. More precisely, one has the following (on the same lines of proof of the indicated result in \cite{bfit})
\begin{thm} \cite[Theorem 7.3]{bfit} \label{thm_6.6} Let $(\A[\|\cdot\|_{\scriptscriptstyle\A}],\Ao)$ be a normed quasi *-algebra. Then the following statements are equivalent:
	\begin{enumerate}
		\item there exists a faithful {\em ($\|\cdot\|_{\scriptscriptstyle\A}$-$\tau_{s^*}$)-}continuous *-representation $\pi$ of $(\A[\|\cdot\|_{\scriptscriptstyle\A}],\Ao)$;
		\item $(\A[\|\cdot\|_{\scriptscriptstyle\A}],\Ao)$ is *-semisimple.
	\end{enumerate}
\end{thm}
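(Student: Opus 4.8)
The plan is to establish the two implications separately: from a faithful strong*-continuous *-representation I will read off enough forms in $\SSA$ to separate points, and conversely I will assemble a faithful strong*-continuous *-representation as a direct sum of GNS-type representations indexed by the forms in $\SSA$.

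For $(1)\Rightarrow(2)$, let $\pi$ be a faithful $(\|\cdot\|_{\scriptscriptstyle\A}$-$\tau_{s^*})$-continuous *-representation and, for each $\xi\in\D_{\scriptscriptstyle\pi}$, set
\[
\Omega_\xi(a,b):=\ip{\pi(a)\xi}{\pi(b)\xi},\qquad a,b\in\A.
\]
Positivity is clear from $\Omega_\xi(a,a)=\|\pi(a)\xi\|^2$, and the invariance $\Omega_\xi(ax,y)=\Omega_\xi(x,a^*y)$ for $x,y\in\Ao$ follows by writing $\pi(ax)=\pi(a)\wmult\pi(x)$, moving the adjoint across the inner product, and using $\pi(a)^{\dagger}=\pi(a^*)$ together with the multiplier identity of Definition \ref{26}. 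Strong*-continuity supplies, for each $\xi$, a constant $C_\xi>0$ with $\|\pi(a)\xi\|\leq C_\xi\|a\|_{\scriptscriptstyle\A}$, so $|\Omega_\xi(a,b)|\leq C_\xi^2\|a\|_{\scriptscriptstyle\A}\|b\|_{\scriptscriptstyle\A}$ and a suitable rescaling of $\Omega_\xi$ belongs to $\SSA$. If $0\neq a\in\A$, faithfulness gives $\pi(a)\neq0$, hence $\pi(a)\xi\neq0$ for some $\xi$, and then $\Omega_\xi(a,a)>0$; this is precisely *-semisimplicity.

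For $(2)\Rightarrow(1)$, I attach to every $\Omega\in\SSA$ a representation. Since $\Omega$ is positive on $\A\times\A$, the Cauchy--Schwarz inequality shows that $N_\Omega:=\{a\in\A:\Omega(a,a)=0\}$ is a subspace and that $\Omega$ induces an inner product on $\A/N_\Omega$; I let $\H_\Omega$ be the resulting Hilbert space completion, $\lambda_\Omega:\A\to\H_\Omega$ the canonical map, and take the dense domain $\D_\Omega:=\lambda_\Omega(\Ao)$ (dense because $\Ao$ is dense in $\A$ and $\lambda_\Omega$ is contractive). Then I define
\[
\pi_\Omega(a)\lambda_\Omega(x):=\lambda_\Omega(ax),\qquad a\in\A,\ x\in\Ao.
\]
Well-definedness is the first thing to check: if $z\in\Ao$ with $\Omega(z,z)=0$, Cauchy--Schwarz on $\A$ forces $\Omega(z,v)=0$ for all $v\in\A$, and then invariance gives $\Omega(az,w)=\Omega(z,a^*w)=0$ for $w\in\Ao$, so by density and continuity $\Omega(az,az)=0$, i.e.\ $az\in N_\Omega$. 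The identity $\ip{\pi_\Omega(a)\lambda_\Omega(x)}{\lambda_\Omega(y)}=\Omega(ax,y)=\Omega(x,a^*y)=\ip{\lambda_\Omega(x)}{\pi_\Omega(a^*)\lambda_\Omega(y)}$ yields $\pi_\Omega(a)^{\dagger}=\pi_\Omega(a^*)$, while $a(xy)=(ax)y$ from Definition \ref{2.1} gives the multiplier law $\pi_\Omega(a)\wmult\pi_\Omega(x)=\pi_\Omega(ax)$ (here $\pi_\Omega(x)$ maps $\D_\Omega$ into itself, so the weak product is defined). Strong*-continuity is the estimate $\|\pi_\Omega(a)\lambda_\Omega(x)\|^2=\Omega(ax,ax)\leq\|ax\|_{\scriptscriptstyle\A}^2\leq\|x\|\ze^2\|a\|_{\scriptscriptstyle\A}^2$, which uses \eqref{norm_0} and the boundedness of $\Omega$, with the same bound for $\pi_\Omega(a^*)$ by isometry of the involution. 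Finally I form $\pi:=\bigoplus_{\Omega\in\SSA}\pi_\Omega$ on the algebraic direct sum of the $\D_\Omega$: it is a strong*-continuous *-representation, and for $0\neq a$ *-semisimplicity provides an $\Omega$ with $\Omega(a,a)>0$, which I use to exhibit a domain vector on which $\pi_\Omega(a)$ does not vanish, so $\pi$ is faithful.

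The delicate point is this last step, i.e.\ turning $\Omega(a,a)>0$ into $\pi_\Omega(a)\neq0$. When a unit $\id$ is present this is immediate, since $\pi_\Omega(a)\lambda_\Omega(\id)=\lambda_\Omega(a)\neq0$; in the general case I expect to reduce to the unital situation by passing to the unitizations $(\A_{\id_{\scriptscriptstyle\A}},\Ao)$ already introduced in Section 6 (a faithful representation of the unitization restricts to a faithful representation of $\A$), after verifying that *-semisimplicity is inherited by the unitization. A secondary, more routine point is to confirm that the direct sum over the (possibly large) index set $\SSA$ defines a genuine element of $\mathcal{L}\ad(\D,\H)$ for the total domain and Hilbert space, and that the uniform control by \eqref{norm_0} makes the sum strong*-continuous.
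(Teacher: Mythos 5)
Your proof follows, in substance, the same route as the source the paper cites for this statement (\cite[Theorem 7.3]{bfit}; the paper gives no proof of its own): vector forms $\Omega_\xi(a,b)=\ip{\pi(a)\xi}{\pi(b)\xi}$ for (1)$\Rightarrow$(2), and a direct sum of GNS-type representations $\pi_\Omega$, $\Omega\in\SSA$, for (2)$\Rightarrow$(1); in the unital case it is correct. Two domain-theoretic steps are glossed over, though both are repairable. In (1)$\Rightarrow$(2), ``moving the adjoint across the inner product'' is illegitimate as stated, because $\pi(y)\xi$ need not belong to $\D_{\pi}$ for a *-representation in the sense of Definition \ref{26}; the invariance $\Omega_\xi(ax,y)=\Omega_\xi(x,a^*y)$ does hold, but one should instead use the right-multiplier identities $\pi(y^*)\wmult\pi(ax)=\pi(y^*(ax))$ and $\pi(x^*)\wmult\pi(a^*y)=\pi(x^*(a^*y))$ (see the note following Definition \ref{26}), which in particular place $\pi(ax)\xi$ in $\D(\pi(y)^*)$ and $\pi(a^*y)\xi$ in $\D(\pi(x)^*)$, together with the identity $x^*(a^*y)=\big(y^*(ax)\big)^*$ from Definition \ref{2.1}; then both sides reduce to $\overline{\ip{\xi}{\pi(y^*(ax))\xi}}$. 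In (2)$\Rightarrow$(1), the remark that ``$\pi_\Omega(x)$ maps $\D_\Omega$ into itself, so the weak product is defined'' verifies only half of the definition of $\wmult$: one must also check $\pi_\Omega(a^*)\D_\Omega\subseteq\D(\pi_\Omega(x)^*)$, which follows from the identity $\Omega(xz,b)=\Omega(z,x^*b)$ for $z\in\Ao$, $b\in\A$ (true for $b\in\Ao$ by property (ii) of $\QA$, and extended to $b\in\A$ using density of $\Ao$, continuity of $\Omega$ and continuity of $L_{x^*}$).

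The genuine gap is the deferred last step, and it cannot be filled in the way you propose: *-semisimplicity is \emph{not} inherited by the unitization, and indeed the implication (2)$\Rightarrow$(1) is false for non-unital normed quasi *-algebras, so no reduction of any kind could succeed. Concretely, take a nonzero Banach space $\A$ with isometric involution and dense *-invariant subspace $\Ao$, and declare every product (inside $\Ao$, and between $\Ao$ and $\A$) to be zero; Definitions \ref{2.1} and \ref{2.3} are satisfied. For each $f\in\A^*$ with $\|f\|\leq 1$, the form $\Omega_f(a,b):=f(a)\overline{f(b)}$ belongs to $\SSA$, since conditions (i) and (ii) of $\QA$ are trivial when all products vanish; by Hahn--Banach, $(\A,\Ao)$ is therefore *-semisimple. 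Yet no *-representation of $(\A,\Ao)$ is faithful: for $x,y\in\Ao$ and $\xi,\eta\in\D_\pi$, Definition \ref{26} gives $\pi(x)^*\pi(y)\xi=(\pi(x^*)\wmult\pi(y))\xi=\pi(x^*y)\xi=0$, hence $\ip{\pi(x)\eta}{\pi(y)\xi}=\ip{\eta}{\pi(x)^*\pi(y)\xi}=0$, and taking $y=x$, $\eta=\xi$ forces $\pi(x)=0$ for every $x\in\Ao$. In particular, *-semisimplicity of $\A$ cannot imply *-semisimplicity of $\A_{\id_{\scriptscriptstyle\A}}$: otherwise your unital argument would yield a faithful continuous representation of $\A_{\id_{\scriptscriptstyle\A}}$, whose restriction to $\A$ would be faithful. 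The correct resolution is that there is no non-unital case to treat at all: Theorem \ref{thm_6.6} stands under the blanket convention fixed earlier in Section 6 that all quasi *-algebras are unital, and under that convention your argument --- with faithfulness read off from $\pi_\Omega(a)\lambda_\Omega(\id)=\lambda_\Omega(a)\neq 0$ --- is already a complete proof.
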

As a direct consequence of Theorem \ref{SS} and Theorem \ref{thm_6.6}, we obtain
\begin{thm}\label{pr_6.7} Let $(\A[\|\cdot\|_{\scriptscriptstyle\A}],\Ao)$, $(\B[\|\cdot\|_{\scriptscriptstyle\B}],\Bo)$ be Banach quasi *-algebras and let $\bar{n}$ be a uniform cross-norm on $\A\otimes\B$. {Consider the following statements}:
		\begin{enumerate}
		\item there exists a faithful {\em ($\|\cdot\|_{\scriptscriptstyle\bar{n}}$-$\tau_{s^*}$)} continuous *-representation $\pi$ of $(\A\otimes_{\scriptscriptstyle\bar {n}}\B,\Ao\otimes\Bo)$;
		
		\item there exist faithful {\em ($\|\cdot\|$-$\tau_{s^*}$)} continuous *-representations $\pi_1$ of $(\A[\|\cdot\|],\Ao)$ and $\pi_2$ of $(\B[\|\cdot\|],\Bo)$, with $\|\cdot\|$ being $\|\cdot\|_{\scriptscriptstyle\A}$ and $\|\cdot\|_{\scriptscriptstyle\B}$, respectively.
		\end{enumerate}
Then {\em (1) $\Rightarrow$ (2)} and when $\A\otimes_{\bar{n}}\B$ is barrelled, one also has that {\em (2) $\Rightarrow$ (1).}
\end{thm}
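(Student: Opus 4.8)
The plan is to reduce both implications to the already-established equivalence between existence of faithful continuous *-representations and *-semisimplicity (Theorem \ref{thm_6.6}), and then feed this into the transfer of *-semisimplicity between a tensor product and its factors (Theorem \ref{SS}). The key observation that turns this into a routine chain of implications is that $(\A\otimes_{\scriptscriptstyle\bar{n}}\B, \Ao\otimes\Bo)$ is itself a normed quasi *-algebra, as constructed in Section 4 from the uniform (hence *-compatible) cross-norm $\bar{n}$; consequently Theorem \ref{thm_6.6} applies verbatim to it, exactly as it does to each factor $(\A[\|\cdot\|_{\scriptscriptstyle\A}],\Ao)$ and $(\B[\|\cdot\|_{\scriptscriptstyle\B}],\Bo)$.

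For the implication (1) $\Rightarrow$ (2), I would first apply Theorem \ref{thm_6.6} to the normed quasi *-algebra $\A\otimes_{\scriptscriptstyle\bar{n}}\B$: since by hypothesis it carries a faithful ($\|\cdot\|_{\scriptscriptstyle\bar{n}}$-$\tau_{s^*}$)-continuous *-representation, the theorem yields that $(\A\otimes_{\scriptscriptstyle\bar{n}}\B, \Ao\otimes\Bo)$ is *-semisimple. Next I would invoke the implication (1) $\Rightarrow$ (2) of Theorem \ref{SS}, which holds with no barrelledness assumption, to conclude that both $(\A[\|\cdot\|_{\scriptscriptstyle\A}],\Ao)$ and $(\B[\|\cdot\|_{\scriptscriptstyle\B}],\Bo)$ are *-semisimple. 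Finally, applying Theorem \ref{thm_6.6} once more, now to each factor separately, produces the desired faithful ($\|\cdot\|_{\scriptscriptstyle\A}$-$\tau_{s^*}$)- and ($\|\cdot\|_{\scriptscriptstyle\B}$-$\tau_{s^*}$)-continuous *-representations $\pi_1$ and $\pi_2$.

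For (2) $\Rightarrow$ (1), under the extra assumption that $\A\otimes_{\scriptscriptstyle\bar{n}}\B$ is barrelled, I would run the same chain in reverse. By Theorem \ref{thm_6.6}, the existence of the faithful continuous *-representations of the two factors gives that both $(\A[\|\cdot\|_{\scriptscriptstyle\A}],\Ao)$ and $(\B[\|\cdot\|_{\scriptscriptstyle\B}],\Bo)$ are *-semisimple. Since $\A\otimes_{\scriptscriptstyle\bar{n}}\B$ is barrelled, the implication (2) $\Rightarrow$ (1) of Theorem \ref{SS} applies and shows that $(\A\otimes_{\scriptscriptstyle\bar{n}}\B, \Ao\otimes\Bo)$ is *-semisimple. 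One last application of Theorem \ref{thm_6.6} to the tensor product then delivers the sought faithful ($\|\cdot\|_{\scriptscriptstyle\bar{n}}$-$\tau_{s^*}$)-continuous *-representation $\pi$.

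Because the argument is merely a concatenation of two previously proved results, I do not expect any serious obstacle. The one point that deserves care is the bookkeeping of topologies: Theorem \ref{thm_6.6} is phrased with the strong* topology $\tau_{s^*}$ on the target, and I must make sure the same topology is used consistently when passing from the tensor product to its factors and back, so that the representations furnished at each step match the continuity type demanded in the statement. It is also worth recording explicitly that the barrelledness hypothesis enters only in the second implication, precisely mirroring its role in Theorem \ref{SS}, and that it is automatically satisfied when $\bar{n}=\gamma$, as noted before Proposition \ref{tensor}.
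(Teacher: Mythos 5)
Your proof is correct and follows exactly the route the paper intends: the paper states Theorem \ref{pr_6.7} as ``a direct consequence of Theorem \ref{SS} and Theorem \ref{thm_6.6}'' without spelling out details, and your chain (apply Theorem \ref{thm_6.6} to pass between faithful $\tau_{s^*}$-continuous *-representations and *-semisimplicity, then transfer *-semisimplicity via Theorem \ref{SS}, with barrelledness needed only for the direction (2) $\Rightarrow$ (1)) is precisely that argument, carried out with the correct bookkeeping of norms and topologies.
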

Theorems \ref{pr_6.8} and \ref{pr_6.9} below give an answer to the second question posed right before Theorem \ref{SS} and concerns full representability.
\begin{thm} \label{pr_6.8} Let $(\A[\|\cdot\|_{\scriptscriptstyle\A}],\Ao)$ and $(\B[\|\cdot\|_{\scriptscriptstyle\B}],\Bo)$ be Banach  quasi *-algebras. Let $\bar{n}$ be a uniform cross-norm. If $(\A\otimes_{\bar{n}}\B,\Ao\otimes\Bo)$ is fully representable, then $(\A[\|\cdot\|_{\scriptscriptstyle\A}],\Ao)$ and $(\B[\|\cdot\|_{\scriptscriptstyle\B}],\Bo)$ are fully representable too.
\end{thm}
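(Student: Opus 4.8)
The plan is to reduce full representability of each factor to the \emph{sufficiency} of its family of continuous representable functionals, and then to detect positive elements of a factor inside the tensor product via the embedding $a\mapsto a\otimes e_{\scriptscriptstyle\B}$ of Remark \ref{iso}. Recall from the discussion following Definition \ref{fully_rep} that, since $(\A[\|\cdot\|_{\scriptscriptstyle\A}],\Ao)$ and $(\B[\|\cdot\|_{\scriptscriptstyle\B}],\Bo)$ are \emph{Banach} quasi *-algebras, the equalities $\A_{\mc R}=\A$ and $\B_{\mc R}=\B$ hold automatically (by \cite[Proposition 3.6]{adatra}); hence it suffices to prove that $\crep$ and ${\mc R}_c(\B,\Bo)$ are sufficient in the sense of Definition \ref{suf}. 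By symmetry I would only treat $(\A[\|\cdot\|_{\scriptscriptstyle\A}],\Ao)$.

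First I would fix $0\neq a\in\A^+$ and verify that $a\otimes e_{\scriptscriptstyle\B}$ is a nonzero positive element of $\A\otimes_{\scriptscriptstyle\bar{n}}\B$. Nonvanishing is immediate from the isometric *-isomorphism of Remark \ref{iso}. For positivity, write $a=\lim_n s_n$ with $s_n=\sum_k x_k^*x_k\in\Ao^+$; then $s_n\otimes e_{\scriptscriptstyle\B}=\sum_k (x_k\otimes e_{\scriptscriptstyle\B})^*(x_k\otimes e_{\scriptscriptstyle\B})\in(\Ao\otimes\Bo)^+$, and since $\bar{n}$ is a cross-norm and $\|e_{\scriptscriptstyle\B}\|_{\scriptscriptstyle\B}=1$,
$$\|s_n\otimes e_{\scriptscriptstyle\B}-a\otimes e_{\scriptscriptstyle\B}\|_{\scriptscriptstyle\bar{n}}=\|s_n-a\|_{\scriptscriptstyle\A}\,\|e_{\scriptscriptstyle\B}\|_{\scriptscriptstyle\B}\longrightarrow 0,$$
so that $a\otimes e_{\scriptscriptstyle\B}$ lies in $\overline{(\Ao\otimes\Bo)^+}^{\|\cdot\|_{\scriptscriptstyle\bar{n}}}=(\A\otimes_{\scriptscriptstyle\bar{n}}\B)^+$.

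Next, since $(\A\otimes_{\scriptscriptstyle\bar{n}}\B,\Ao\otimes\Bo)$ is fully representable, its family ${\mc R}_c(\A\otimes_{\scriptscriptstyle\bar{n}}\B,\Ao\otimes\Bo)$ is sufficient, so there exists $\Omega$ in it with $\Omega(a\otimes e_{\scriptscriptstyle\B})>0$. I would then set $\omega_1(a'):=\Omega(a'\otimes e_{\scriptscriptstyle\B})$ for $a'\in\A$ and run exactly the restriction argument of Proposition \ref{pr_6.3}: conditions (L.1)--(L.3) for $\omega_1$ follow from those of $\Omega$ by inserting the elementary tensors $(\,\cdot\,)\otimes e_{\scriptscriptstyle\B}$ and using $(u\otimes e_{\scriptscriptstyle\B})^*(v\otimes e_{\scriptscriptstyle\B})=(u^*v)\otimes e_{\scriptscriptstyle\B}$ together with the associativity laws of Definition \ref{2.1}(ii), while continuity follows from $|\omega_1(a')|\le\|\Omega\|\,\|a'\otimes e_{\scriptscriptstyle\B}\|_{\scriptscriptstyle\bar{n}}=\|\Omega\|\,\|a'\|_{\scriptscriptstyle\A}$. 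Thus $\omega_1\in\crep$ (and, being representable and continuous, it is positive, so it genuinely belongs to the family appearing in Definition \ref{suf}), with $\omega_1(a)=\Omega(a\otimes e_{\scriptscriptstyle\B})>0$. This gives sufficiency for $\A$; the argument for $\B$ is identical, using $e_{\scriptscriptstyle\A}\otimes(\,\cdot\,)$ and Remark \ref{iso} on the second leg.

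The only point deserving care is that Proposition \ref{pr_6.3} is phrased for the \emph{completion} $\A\widehat{\otimes}_{\scriptscriptstyle\bar{n}}\B$, whereas here $\Omega$ lives only on the normed algebra $\A\otimes_{\scriptscriptstyle\bar{n}}\B$. I expect this to cause no real difficulty: the restriction argument uses nothing beyond the fact that $\A\otimes_{\scriptscriptstyle\bar{n}}\{e_{\scriptscriptstyle\B}\}$ is an isometric *-isomorphic copy of $\A[\|\cdot\|_{\scriptscriptstyle\A}]$ (Remark \ref{iso}), and this already holds at the normed level, so the conclusion transfers verbatim. Accordingly the main (though essentially routine) work will be this bookkeeping, namely checking that representability and positivity are preserved under the embedding and confirming that the positive cone of the tensor product restricts correctly to that of $\A$.
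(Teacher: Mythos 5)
Your proof is correct and follows essentially the same route as the paper's: reduce full representability of each factor to sufficiency of $\mathcal{R}_c(\A,\Ao)$ using \cite{adatra}, embed a nonzero positive $a$ as the positive nonzero element $a\otimes\id_{\scriptscriptstyle\B}$ of $\A\otimes_{\scriptscriptstyle\bar{n}}\B$, pick $\Omega$ by sufficiency of the tensor product, and restrict it back via Remark \ref{iso} and the argument of Proposition \ref{pr_6.3}. Your added details (the explicit approximation argument for positivity of $a\otimes\id_{\scriptscriptstyle\B}$, and the observation that Proposition \ref{pr_6.3} is stated for the completion but its restriction argument works verbatim at the normed level) merely fill in points the paper leaves implicit.
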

\begin{proof} We show that $(\A[\|\cdot\|_{\scriptscriptstyle\A}],\Ao)$ is fully representable. By \cite[Theorem 3.9]{adatra}, it suffices to show equivalently that $\mathcal{R}_c(\A,\Ao)$ is sufficient.
Let $a\in\A^+$, with $a\neq 0$. Then, by Remark \ref{iso}, $a\otimes\id_{\scriptscriptstyle\B}$ is positive and nonzero in $\A\otimes_{\scriptscriptstyle\bar{n}}\B$.
So by full representability of $\A\otimes_{\scriptscriptstyle\bar{n}}\B$, there exists $\Omega\in \mathcal{R}_c(\A\otimes_{\scriptscriptstyle\bar{n}}\B, \Ao\otimes \Bo)$, such that
$\Omega(a\otimes\id_{\scriptscriptstyle\B})>0$.

Now, by Proposition \ref{pr_6.3}, there exists a representable and continuous linear functional  $\omega_1$ on $\A$ defined as restriction of $\Omega$ on $\A\times \A$. Thus $\omega_1(a)=\Omega(a\otimes\id_{\scriptscriptstyle\B})>0$. Therefore, we conclude that $\mathcal{R}_c(\A,\Ao)$ is sufficient.
The same argument applies for the Banach quasi *-algebra $(\B[\|\cdot\|_{\scriptscriptstyle\B}],\Bo)$.
\end{proof}
For the opposite direction of Theorem \ref{pr_6.8}, we shall assume the condition (of positivity) $(P)$, which reads as follows:
\[
\begin{aligned} 
&a\in\A\;\;\,\text{with}\;\;\,\omega(x^*ax)\geq0,\;\;\;\forall \ \omega\in\mathcal{R}_c(\A,\Ao) \\ &\text{and } \ \forall \ x\in\Ao\;\;\text{implies}\;\;a\in\A^+,
\end{aligned}
\tag{6.4} \label{P}
\]
see e.g., \cite[Section 3]{ftt} and \cite[Remark 2.18]{adatra}.

For the type of converse to Theorem \ref{pr_6.8} mentioned before we shall first prove a series of lemmas. We begin with two Banach quasi *-algebras $(\A[\|\cdot\|_{\scriptscriptstyle \A}],\Ao)$ and $(\B[\|\cdot\|_{\scriptscriptstyle \B}],\Bo)$. Let $(\A\otimes_{\bar{n}}\B,\Ao\otimes\Bo)$ be the corresponding tensor product normed quasi *-algebra with respect to the uniform cross-norm $\bar{n}$ as above. Consider    $\Omega\in\mathcal{R}_c(\A\otimes_{\scriptscriptstyle\bar{n}}\B,\Ao\otimes\Bo)$. Then a sesquilinear form $\varphi_{\Omega}$ is defined on $\Ao\otimes\Bo \times \Ao\otimes\Bo$ by \eqref{fiom}. %closable according to \cite[Proposition 2.7]{ftt}. For the sake %of completeness we prove closability of $\varphi_{\Omega}$. Let %$\{z_n\}$ be a sequence in $\Ao\otimes\Bo$, such that %$\|z_n\|_{\scriptscriptstyle\bar{n}}\to0$ and %$\varphi_{\Omega}(z_n-z_m,z_n-z_m)\to 0$, as $n\to \infty$.
 Employing the GNS representation $\pi_{\Omega}$ of $\Omega$, as well as the corresponding cyclic vector $\xi_{\Omega}$ (see, e.g., \cite[Theorem 2.4.8]{trafra}), we have that
$$\Omega(c) = \langle \pi_{\Omega}(c)\xi_{\Omega} | \xi_{\Omega} \rangle, \quad \forall \ c \in \A\otimes_{\bar{n}}\B.$$

Consider the sesquilinear form $\phi_{\Omega}:\A\otimes_{\scriptscriptstyle\bar{n}}\B\times\A\otimes_{\scriptscriptstyle\bar {n}}\B\to\mathbb{C}$ defined by
\setcounter{equation}{4}
\begin{equation} \label{fiomega}
\phi_{\Omega}(c,c')=\ip{\pi_{\Omega}(c)\xi_{\Omega}}{\pi_{\Omega}(c')\xi_{\Omega}}, \quad \forall \  c, c'\in\A\otimes_{\scriptscriptstyle\bar {n}}\B,
\end{equation}
where $\pi_{\Omega}$ and $\xi_{\Omega}$ are as before and note that $\phi_{\Omega}=\varphi_{\Omega}$ on $\Ao\otimes\Bo\times\Ao\otimes\Bo$. Then we have the following
%{\em Using the same arguments, as above, we can show that $\phi_{\Omega}$ is closable.} 
%
\begin{lemma} \label{615} The sesquilinear form $\phi_{\Omega}$, defined everywhere on $\A\otimes_{\scriptscriptstyle\bar{n}}\B\times\A\otimes_{\scriptscriptstyle\bar {n}}\B\to\mathbb{C}$, is closed.
\end{lemma}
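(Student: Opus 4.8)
The plan is to read $\phi_{\Omega}$ as the form induced by the single linear map $\Lambda:\A\otimes_{\scriptscriptstyle\bar{n}}\B\to\H_{\Omega}$, $\Lambda(c):=\pi_{\Omega}(c)\xi_{\Omega}$, namely $\phi_{\Omega}(c,c')=\ip{\Lambda(c)}{\Lambda(c')}$. With this reading, closedness of $\phi_{\Omega}$ in the sense recalled after \eqref{overfi} amounts to $\Lambda$ being a closed operator from the normed space $\A\otimes_{\scriptscriptstyle\bar{n}}\B$ into $\H_{\Omega}$; this is a genuine condition because $\A\otimes_{\scriptscriptstyle\bar{n}}\B$ is only normed, not complete. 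Since here $\D(\phi_{\Omega})$ is the whole of $\A\otimes_{\scriptscriptstyle\bar{n}}\B$, any $\|\cdot\|_{\scriptscriptstyle\bar{n}}$-convergent sequence $v_n\to v$ has its limit already in $\D(\phi_{\Omega})$, so the only thing left to establish is the second requirement: that $\phi_{\Omega}(v_n-v_m,v_n-v_m)\to0$ forces $\phi_{\Omega}(v-v_n,v-v_n)\to0$.

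So I would take $\{v_n\}\subset\A\otimes_{\scriptscriptstyle\bar{n}}\B$ with $v_n\to v$ in $\|\cdot\|_{\scriptscriptstyle\bar{n}}$ and $\phi_{\Omega}(v_n-v_m,v_n-v_m)=\|\Lambda(v_n)-\Lambda(v_m)\|^2\to0$. The latter says that $\{\Lambda(v_n)\}$ is Cauchy in $\H_{\Omega}$, hence $\Lambda(v_n)\to\zeta$ for a unique $\zeta\in\H_{\Omega}$. Since $\phi_{\Omega}(v-v_n,v-v_n)=\|\Lambda(v)-\Lambda(v_n)\|^2$, the whole proof reduces to identifying the limit, i.e. to proving $\zeta=\Lambda(v)=\pi_{\Omega}(v)\xi_{\Omega}$.

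The key computational step, which I regard as the heart of the argument, is the GNS identity
\[
\phi_{\Omega}(c,z)=\ip{\pi_{\Omega}(c)\xi_{\Omega}}{\pi_{\Omega}(z)\xi_{\Omega}}=\Omega(z^*c),\qquad c\in\A\otimes_{\scriptscriptstyle\bar{n}}\B,\ z\in\Ao\otimes\Bo,
\]
which follows by combining the multiplier property $\pi_{\Omega}(z^*c)=\pi_{\Omega}(z^*)\wmult\pi_{\Omega}(c)$ of Definition \ref{26}(ii) with $\pi_{\Omega}(z^*)=\pi_{\Omega}(z)^{\dagger}$ and the defining relation $\Omega(\,\cdot\,)=\ip{\pi_{\Omega}(\,\cdot\,)\xi_{\Omega}}{\xi_{\Omega}}$ (it also recovers $\phi_{\Omega}=\varphi_{\Omega}$ on $\Ao\otimes\Bo$, consistently with \eqref{fiom}). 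Testing $\zeta$ against the vectors $\pi_{\Omega}(z)\xi_{\Omega}$, $z\in\Ao\otimes\Bo$, continuity of the inner product together with the identity above gives $\ip{\zeta}{\pi_{\Omega}(z)\xi_{\Omega}}=\lim_n\ip{\Lambda(v_n)}{\pi_{\Omega}(z)\xi_{\Omega}}=\lim_n\Omega(z^*v_n)$. Because left multiplication by $z^*$ is continuous on the normed quasi *-algebra $\A\otimes_{\scriptscriptstyle\bar{n}}\B$ (as secured by the construction in Section 4, cf. Definition \ref{2.3}), one has $z^*v_n\to z^*v$ in $\|\cdot\|_{\scriptscriptstyle\bar{n}}$, and continuity of $\Omega$ gives $\Omega(z^*v_n)\to\Omega(z^*v)=\ip{\pi_{\Omega}(v)\xi_{\Omega}}{\pi_{\Omega}(z)\xi_{\Omega}}$. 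Hence $\ip{\zeta-\pi_{\Omega}(v)\xi_{\Omega}}{\pi_{\Omega}(z)\xi_{\Omega}}=0$ for every $z\in\Ao\otimes\Bo$.

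Finally I would invoke cyclicity of the GNS representation: the set $\{\pi_{\Omega}(z)\xi_{\Omega}:z\in\Ao\otimes\Bo\}$ is dense in $\H_{\Omega}$, so the last orthogonality relation forces $\zeta=\pi_{\Omega}(v)\xi_{\Omega}=\Lambda(v)$; consequently $\phi_{\Omega}(v-v_n,v-v_n)=\|\zeta-\Lambda(v_n)\|^2\to0$, which is precisely closedness. I do not expect a serious obstacle here: the one point deserving care is the interchange of limits, but once the identity $\phi_{\Omega}(c,z)=\Omega(z^*c)$ is in hand it is controlled entirely by the continuity of $\Omega$ and of the outer multiplications, both of which are built into the construction of $\A\otimes_{\scriptscriptstyle\bar{n}}\B$, together with the density coming from cyclicity.
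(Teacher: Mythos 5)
Your proof is correct, and it shares the paper's skeleton: from $\phi_{\Omega}(v_n-v_m,v_n-v_m)=\|\pi_{\Omega}(v_n)\xi_{\Omega}-\pi_{\Omega}(v_m)\xi_{\Omega}\|^{2}\to 0$ both arguments extract a limit $\zeta\in\H_{\Omega}$ of the Cauchy sequence $\{\pi_{\Omega}(v_n)\xi_{\Omega}\}$ and then identify $\zeta=\pi_{\Omega}(v)\xi_{\Omega}$ by pairing against a dense family of vectors. The difference lies in how that identification is justified. The paper invokes, as a known property, the weak ($\|\cdot\|_{\scriptscriptstyle\bar{n}}$-$\tau_w$) continuity of the GNS representation $\pi_{\Omega}$ of the continuous functional $\Omega$, and tests against every $\eta\in\D_{\pi_{\Omega}}$. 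You instead test only against the vectors $\pi_{\Omega}(z)\xi_{\Omega}$, $z\in\Ao\otimes\Bo$, and prove the needed convergence from first principles: the GNS identity $\phi_{\Omega}(c,z)=\Omega(z^{*}c)$ (your derivation via the multiplier property, $\pi_{\Omega}(z^{*})=\pi_{\Omega}(z)^{\dagger}$, and the defining relation of $\xi_{\Omega}$ is sound), the norm continuity of $\Omega$ (part of the hypothesis $\Omega\in\mathcal{R}_c(\A\otimes_{\scriptscriptstyle\bar{n}}\B,\Ao\otimes\Bo)$), and the continuity of left multiplication by elements of $\Ao\otimes\Bo$ secured by the Section 4 construction; cyclicity of the GNS vector then closes the argument. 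What each buys: your route is more self-contained, replacing the unproved (though standard) input that $\pi_{\Omega}$ is weakly continuous by facts explicitly available in the paper --- indeed, weak continuity of $\pi_{\Omega}$ on the cyclic domain is established by essentially your computation, so your argument amounts to inlining the proof of the ingredient the paper cites --- while the paper's version is correspondingly shorter. Since $\D_{\pi_{\Omega}}$ is generated from $\pi_{\Omega}(\Ao\otimes\Bo)\xi_{\Omega}$, the two dense families essentially coincide, and both proofs ultimately rest on the same estimate.
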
	
\begin{proof} If $\{v_n\}$ is a sequence in $\A\otimes_{\scriptscriptstyle\bar {n}}\B$, such that $v_n\to v$ in $\A\otimes_{\scriptscriptstyle\bar {n}}\B$ and 
\begin{equation}\label{vien} 
\phi_{\Omega}(v_n-v_m,v_n-v_m) \to0, \;\text{as} \;n,m \to\infty,
\end{equation} 
we must show that $\phi_{\Omega}(v_n-v,v_n-v)\to0$, as $n\to\infty$ (see, for instance \cite[Definition 53.12]{driv}). 
From \eqref{vien}, we obtain 
\begin{align*}
	\phi_{\Omega}(v_n-v_m,v_n-v_m)&=\ip{\pi_{\Omega}(v_n-v_m)\xi_{\Omega}}{\pi_{\Omega}(v_n-v_m)\xi_{\Omega}}\\
	&=\|\pi_{\Omega}(v_n-v_m)\xi_{\Omega}\|^2\\
	&=\|\pi_{\Omega}(v_n)\xi_{\Omega}-\pi_{\Omega}(v_m)\xi_{\Omega}\|^2\to0.
\end{align*}
This proves that $\{\pi_{\Omega}(v_n)\xi_{\Omega}\}$ is a Cauchy sequence in $\mathcal{H}_{\Omega}$. Thus there exists $\zeta\in\mathcal{H}_{\Omega}$, such that $\|\pi_{\Omega}(v_n)\xi_{\Omega}-\zeta\|\to0$. The weak continuity of $\pi_{\Omega}$ gives
$$\ip{\pi_{\Omega}(v_n)\xi_{\Omega}}{\eta}\to\ip{\pi_{\Omega}(v)\xi_{\Omega}}{\eta},\quad \forall \ \eta\in\D_{\pi_{\Omega}}.$$
Therefore, $\ip{\zeta}{\eta}=\ip{\pi_{\Omega}\ (v)\xi_{\Omega}}{\eta}$, for every $\eta\in\D_{\pi_{\Omega}}$. We conclude that $\zeta=\pi_{\Omega}(v)\xi_\Omega$,    $v\in\D(\phi_{\Omega})=\A\otimes_{\scriptscriptstyle\bar{n}}\B$ and that
$$\|\pi_{\Omega}(v_n)\xi_{\Omega}-\pi_{\Omega}(v)\xi_{\Omega}\|=\phi_{\Omega}(v_n-v,v_n-v)\to0,$$
i.e., $\phi_{\Omega}$ is a closed sesquilinear form. \end{proof}
On the quasi *-algebra $\A\otimes \B$ define the norm \cite[Subsection 1.2]{ouh}
\begin{equation} \label{norfiom}
\|c\|_{\phi_{\Omega}}:=\sqrt{\|c\|_{\scriptscriptstyle\bar{n}}^2+\phi_{\Omega}(c,c)}=\sqrt{\|c\|_{\scriptscriptstyle\bar{n}}^2+\|\pi_{\Omega}(c)\xi_{\Omega}\|^2}, \quad \forall \ c\in\A\otimes\B.
\end{equation}
The normed space 
$\A \otimes\B[\|\cdot\|_{\scriptscriptstyle\phi_{\Omega}}]$ will be denoted, for short, by $\A\otimes_{\scriptscriptstyle \phi_{\Omega}}\B$ and its respective completion by $\A\widehat{\otimes}_{\scriptscriptstyle\phi_{\Omega}}\B$. 

In this regard, we have the following
\begin{lemma} \label{616} The correspondence $$j:\A\otimes_{\scriptscriptstyle\bar {n}}\B \to \A\widehat{\otimes}_{\scriptscriptstyle\phi_{\Omega}}\B : j(c)=c\in\A\widehat{\otimes}_{\scriptscriptstyle\phi_{\Omega}}\B, \quad \forall \ c \in \A\otimes_{\scriptscriptstyle\bar {n}}\B,$$
is a well defined closed linear operator.	
\end{lemma}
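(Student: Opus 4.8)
The plan is to verify the three claims—well-definedness, linearity, and closedness—in turn, with the last being the only substantive one. First I would record that $\|\cdot\|_{\phi_{\Omega}}$ is genuinely a norm on $\A\otimes\B$. Since $\phi_{\Omega}(c,c)=\|\pi_{\Omega}(c)\xi_{\Omega}\|^2\geq0$, the quantity $\|c\|_{\phi_{\Omega}}=\sqrt{\|c\|_{\bar{n}}^2+\phi_{\Omega}(c,c)}$ is the $\ell^2$-combination of the norm $\|\cdot\|_{\bar{n}}$ and the seminorm $c\mapsto\|\pi_{\Omega}(c)\xi_{\Omega}\|$, so it obeys the triangle inequality and is positive definite (the $\|\cdot\|_{\bar{n}}$ summand alone forces definiteness). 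Consequently $\A\otimes\B$ embeds into its completion $\A\widehat{\otimes}_{\phi_{\Omega}}\B$, the assignment $j(c)=c$ lands in the codomain, and $j$ is a well-defined linear map. I would also note the immediate inequality $\|c\|_{\bar{n}}\leq\|c\|_{\phi_{\Omega}}$ for all $c$.

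The heart of the lemma is closedness, which I would establish directly from the definition, by showing the graph of $j$ is closed in $\A\otimes_{\bar{n}}\B\times\A\widehat{\otimes}_{\phi_{\Omega}}\B$. So take $\{c_n\}\subset\A\otimes\B$ with $c_n\to c$ in $\A\otimes_{\bar{n}}\B$ (in particular $c\in\A\otimes\B$) and $j(c_n)=c_n\to w$ in $\A\widehat{\otimes}_{\phi_{\Omega}}\B$; the goal is $w=j(c)=c$. Since $\{c_n\}$ converges in the $\phi_{\Omega}$-norm it is $\phi_{\Omega}$-Cauchy, and the identity $\|c_n-c_m\|_{\phi_{\Omega}}^2=\|c_n-c_m\|_{\bar{n}}^2+\phi_{\Omega}(c_n-c_m,c_n-c_m)$ forces $\phi_{\Omega}(c_n-c_m,c_n-c_m)\to0$. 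Now both hypotheses for the closedness of $\phi_{\Omega}$ (Lemma \ref{615}, in the form recalled via \cite[Definition 53.12]{driv}) are in place: $c_n\to c$ in $\A\otimes_{\bar{n}}\B$ and $\phi_{\Omega}(c_n-c_m,c_n-c_m)\to0$. Applying it yields $\phi_{\Omega}(c_n-c,c_n-c)\to0$, whence
\[
\|c_n-c\|_{\phi_{\Omega}}^2=\|c_n-c\|_{\bar{n}}^2+\phi_{\Omega}(c_n-c,c_n-c)\to0.
\]
Thus $c_n\to c$ in $\A\widehat{\otimes}_{\phi_{\Omega}}\B$, and by uniqueness of limits $w=c=j(c)$, so the graph is closed.

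The only real obstacle is matching the notion of closedness of the operator to the closedness already proven for the form, and the point I would stress is the decomposition above: convergence in $\|\cdot\|_{\phi_{\Omega}}$ splits exactly into $\bar{n}$-convergence plus convergence of the quadratic form $\phi_{\Omega}(\cdot,\cdot)$, so Lemma \ref{615} is precisely the tool that upgrades the statement ``$\{c_n\}$ is $\phi_{\Omega}$-Cauchy and $\bar{n}$-convergent to $c$'' into ``$c_n\to c$ in the $\phi_{\Omega}$-norm''. No completeness of $\A\otimes_{\bar{n}}\B$ is required, since $c$ is assumed to lie in $\A\otimes\B$ as part of the graph-closedness hypothesis, and the $\phi_{\Omega}$-limit $w$ is then forced to coincide with this $c$.
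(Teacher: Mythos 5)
Your proof is correct and follows essentially the same route as the paper's: both reduce closedness of $j$ to the closedness of the form $\phi_{\Omega}$ (Lemma \ref{615}) via the decomposition $\|\cdot\|_{\phi_{\Omega}}^2=\|\cdot\|_{\bar{n}}^2+\phi_{\Omega}(\cdot,\cdot)$, extracting $\phi_{\Omega}(c_n-c_m,c_n-c_m)\to0$ from $\phi_{\Omega}$-norm convergence and then upgrading to $\|c_n-c\|_{\phi_{\Omega}}\to0$ to identify the limit. The only (harmless) divergence is that the paper formally places the graph in $\A\widehat{\otimes}_{\bar{n}}\B\times\A\widehat{\otimes}_{\phi_{\Omega}}\B$ while you take the limit $c$ to lie in $\A\otimes_{\bar{n}}\B$ from the outset; your reading is precisely what Lemma \ref{615} as proved delivers and what the closed graph theorem in Lemma \ref{617} requires.
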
	
\begin{proof}	
We first prove that $j$ is well defined. Indeed, let $c \in \A\otimes_{\scriptscriptstyle\bar {n}}\B$ with $c=\sum_{i=1}^na_i\otimes b_i=0$. Then $\|c\|_{\scriptscriptstyle\bar{n}}=0$ and $\pi_{\Omega}(c)=0$. Hence $\|\pi_{\Omega}(c)\xi_{\Omega}\|^2=0$. Therefore, $\|c\|_{\scriptscriptstyle\phi_{\Omega}}=0$, i.e., $j(c)=0$. Clearly, the map $j$ is the identity map and it is linear.
\smallskip
	
We know that $j$ will be closed if, and only if, its graph $G_j := \{(c, j(c)) : c \in \A\otimes_{\scriptscriptstyle\bar {n}}\B\}$ is closed in $\A\widehat{\otimes}_{\scriptscriptstyle\bar {n}}\B \times
\A\widehat{\otimes}_{\scriptscriptstyle\phi_{\Omega}}\B$. To show the closedness of the operator $j$ means that for any sequence $\{c_n\}$ in $\A\otimes_{\scriptscriptstyle\bar{n}}\B$, such that $\|c_n-c\|_{\scriptscriptstyle\bar{n}}\to0$ and $\|j(c_n)-d\|_{\scriptscriptstyle\phi_{\Omega}}\to0$, for some $d\in\A\widehat{\otimes}_{\scriptscriptstyle\phi_{\Omega}}\B$, it holds that  $c\in\A\otimes_{\scriptscriptstyle\bar {n}}\B$ and $j(c)=d$. 
	
The sequences $\{c_n\}$ and $\{j(c_n)\}$ are $\|\cdot\|_{\scriptscriptstyle{\bar{n}}}$-, respectively  $\|\cdot\|_{\scriptscriptstyle\phi_{\Omega}}$-Cauchy, so
$$\|j(c_n)-j(c_m)\|^2_{\scriptscriptstyle\phi_{\Omega}}=\|c_n-c_m\|^2_{\scriptscriptstyle\bar{n}}+\phi_{\Omega}(c_n-c_m,c_n-c_m)\to0,$$
	which implies that $\phi_{\Omega}(c_n-c_m,c_n-c_m)\to0$. Since $\|c_n-c\|_{\scriptscriptstyle\bar{n}}\to0$ and $\phi_{\Omega}$ is closed (see Lemma \ref{615}), we have that $c\in\A\otimes_{\scriptscriptstyle\bar {n}}\B$ and $\phi_{\Omega}(c_n-c,c_n-c)\to0$. Hence
	$$\|j(c_n)-j(c)\|^2_{\scriptscriptstyle\phi_{\Omega}}=\|c_n-c\|^2_{\scriptscriptstyle\bar{n}}+\phi_{\Omega}(c_n-c,c_n-c)\to0,$$
	consequently $j(c)=d$.  
\end{proof}
\begin{lemma} \label{617}
Let $(\A[\|\cdot\|_{\scriptscriptstyle\A}],\Ao)$, $(\B[\|\cdot\|_{\scriptscriptstyle\B}],\Bo)$ be Banach quasi *-algebras, such that their $\bar{n}$-tensor product normed quasi *-algebra $(\A\otimes_{\bar{n}}\B,\Ao\otimes\Bo)$ is barrelled. Then if  $\Omega\in\mathcal{R}_c(\A\otimes_{\bar{n}}\B,\Ao\otimes\Bo)$, the sesquilinear form $\phi_{\Omega}$ defined in {\em \eqref{fiomega}} is  continuous. 
\end{lemma}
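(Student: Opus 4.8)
The plan is to obtain the continuity of $\phi_{\Omega}$ as a consequence of the closedness of the identification operator $j$ produced in Lemma \ref{616}, by invoking a closed graph theorem tailored to a barrelled domain. First I would record the three structural facts already at our disposal: by Lemma \ref{616} the linear map $j:\A\otimes_{\scriptscriptstyle\bar{n}}\B\to\A\widehat{\otimes}_{\scriptscriptstyle\phi_{\Omega}}\B$ has closed graph; by hypothesis its domain $\A\otimes_{\scriptscriptstyle\bar{n}}\B$ is barrelled; and by construction its codomain $\A\widehat{\otimes}_{\scriptscriptstyle\phi_{\Omega}}\B$ is a Banach, hence Fréchet, space. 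Since $\A\otimes_{\scriptscriptstyle\bar{n}}\B$ is only a normed space and need not be complete, the classical Banach closed graph theorem is not directly applicable; the point of assuming barrelledness is exactly to replace completeness of the domain, via the generalisation that a closed linear map from a barrelled space into a Fréchet space is automatically continuous (see, e.g., \cite{perez}).

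Applying that theorem yields a constant $M>0$ with $\|j(c)\|_{\scriptscriptstyle\phi_{\Omega}}\leq M\|c\|_{\scriptscriptstyle\bar{n}}$ for every $c\in\A\otimes_{\scriptscriptstyle\bar{n}}\B$. Unwinding the definition \eqref{norfiom} of the norm $\|\cdot\|_{\scriptscriptstyle\phi_{\Omega}}$ and squaring, I would then get
\[
\|c\|_{\scriptscriptstyle\bar{n}}^2+\phi_{\Omega}(c,c)\leq M^2\|c\|_{\scriptscriptstyle\bar{n}}^2,
\]
so that $\phi_{\Omega}(c,c)=\|\pi_{\Omega}(c)\xi_{\Omega}\|^2\leq (M^2-1)\|c\|_{\scriptscriptstyle\bar{n}}^2$. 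This already controls the diagonal values of the form in terms of the $\bar{n}$-norm.

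To promote this diagonal bound to joint continuity I would exploit that $\phi_{\Omega}$ is a positive sesquilinear form, being the pull-back of the inner product of $\Hil_{\Omega}$, and apply the Cauchy--Schwarz inequality: for all $c,c'\in\A\otimes_{\scriptscriptstyle\bar{n}}\B$,
\[
|\phi_{\Omega}(c,c')|=|\ip{\pi_{\Omega}(c)\xi_{\Omega}}{\pi_{\Omega}(c')\xi_{\Omega}}|\leq \phi_{\Omega}(c,c)^{1/2}\,\phi_{\Omega}(c',c')^{1/2}\leq (M^2-1)\,\|c\|_{\scriptscriptstyle\bar{n}}\|c'\|_{\scriptscriptstyle\bar{n}},
\]
which is precisely the joint continuity of $\phi_{\Omega}$ on $\A\otimes_{\scriptscriptstyle\bar{n}}\B\times\A\otimes_{\scriptscriptstyle\bar{n}}\B$.

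The step I expect to be the main obstacle is the very first reduction, namely justifying that the correct closed graph theorem is in force here. One must identify a version valid when the barrelled domain is possibly incomplete and the codomain is Fréchet (indeed Banach); this is exactly where barrelledness of $\A\otimes_{\scriptscriptstyle\bar{n}}\B$, rather than its completeness, is indispensable, and it mirrors the use of barrelledness already made in Proposition \ref{tensor}. Once that is secured, the remainder is entirely routine: a rescaling of the norm inequality and a single application of Cauchy--Schwarz for the positive form $\phi_{\Omega}$.
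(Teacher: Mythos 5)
Your proposal is correct and follows essentially the same route as the paper: both rest on the closed identity operator $j$ of Lemma \ref{616} together with a closed graph theorem for a barrelled domain (the paper cites Horv\'ath's version with Pt\'ak-space codomain, of which your Banach/Fr\'echet codomain is a special case), yielding the diagonal bound $\phi_\Omega(c,c)\leq \gamma^2\|c\|_{\scriptscriptstyle\bar{n}}^2$. Your explicit Cauchy--Schwarz step upgrading the diagonal bound to joint continuity is the only addition; the paper leaves that final step implicit.
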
	
\begin{proof}
Consider the closed identity operator $j$ of Lemma \ref{616}.
Notice that its domain $\A\otimes_{\scriptscriptstyle\bar{n}}\B$ is a barrelled space and its range $\A\widehat{\otimes}_{\scriptscriptstyle\phi_{\Omega}}\B$ is a Pt\'ak space, as a Banach space (see \cite[p.~299, Definition 2 and Proposition 3(a)] {horv}). Moreover, the identity operator $j$ being closed has a closed graph, so by the closed graph theorem \cite[p.~301, Theorem 4]{horv} is continuous. Therefore, there exists a non-negative constant $\gamma$, such that
	$$\|j(c)\|^2_{\scriptscriptstyle\phi_{\Omega}}\leq \gamma^2\|c\|_{\scriptscriptstyle\bar{n}}^2,\quad \forall \ c\in\A\otimes_{\scriptscriptstyle\bar {n}}\B.$$
	From \eqref{norfiom}, we now obtain
	$$\phi_\Omega (c,c)\leq \gamma^2\|c\|_{\scriptscriptstyle\bar{n}}^2,\quad \forall \ c\in\A\otimes_{\scriptscriptstyle\bar {n}}\B,$$
	that yields continuity of $\phi_{\Omega}$.
\end{proof}
We are now ready to state and prove the type of converse to Theorem \ref{pr_6.8} announced after the proof of the latter.
\begin{thm} \label{pr_6.9} Let $(\A[\|\cdot\|_{\scriptscriptstyle\A}],\Ao)$, $(\B[\|\cdot\|_{\scriptscriptstyle\B}],\Bo)$ be Banach quasi *-algebras that are fully representable and satisfy the condition {\em (P) (see \eqref{P})}. Suppose also that the normed quasi *-algebra  $(\A\otimes_{\bar{n}}\B,\Ao\otimes\Bo)$ is barrelled. Then $(\A\otimes_{\bar{n}}\B,\Ao\otimes\Bo)$ is fully representable.
\end{thm}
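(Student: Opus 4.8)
The plan is to establish directly the two defining requirements of full representability (Definition~\ref{fully_rep}) for the normed quasi *-algebra $(\A\otimes_{\bar n}\B,\Ao\otimes\Bo)$, namely the domain equality $(\A\otimes_{\bar n}\B)_{\mathcal R}=\A\otimes_{\bar n}\B$ (cf.\ \eqref{ar}) and the sufficiency of $\mathcal{R}_c(\A\otimes_{\bar n}\B,\Ao\otimes\Bo)$. The first requirement is precisely what Lemmas~\ref{615}--\ref{617} were designed to yield. Indeed, I would fix an arbitrary $\Omega\in\mathcal{R}_c(\A\otimes_{\bar n}\B,\Ao\otimes\Bo)$ and form $\phi_{\Omega}$ as in \eqref{fiomega}; by Lemma~\ref{617} (where barrelledness enters through the closed graph theorem, via Lemmas~\ref{615} and~\ref{616}) the form $\phi_{\Omega}$ is jointly continuous. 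Since $\phi_{\Omega}=\varphi_{\Omega}$ on $\Ao\otimes\Bo\times\Ao\otimes\Bo$, the form $\varphi_{\Omega}$ is jointly $\|\cdot\|_{\bar n}$-continuous on the dense subspace $\Ao\otimes\Bo$, whence, by the observation following \eqref{ar}, $D(\overline{\varphi}_{\Omega})=\A\otimes_{\bar n}\B$. As $\Omega$ was arbitrary, intersecting over all such $\Omega$ gives $(\A\otimes_{\bar n}\B)_{\mathcal R}=\A\otimes_{\bar n}\B$.

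For sufficiency I would route through *-semisimplicity. First I would note that each factor is *-semisimple: if $(\A[\|\cdot\|_{\scriptscriptstyle\A}],\Ao)$ is fully representable and satisfies (P), then by Remark~\ref{rem_217} the family $\mathcal{R}_c(\A,\Ao)$ separates the points of $\A$, so for $0\neq a\in\A$ there is $\omega\in\mathcal{R}_c(\A,\Ao)$ with $\omega(a)\neq 0$. Since $\omega(a)=\Omega^\omega(a,e)=\ip{\pi_\omega(a)\xi_\omega}{\xi_\omega}$ by \eqref{Omega}, the vanishing of $\Omega^\omega(a,a)=\|\pi_\omega(a)\xi_\omega\|^2$ would force $\omega(a)=0$; hence $\Omega^\omega(a,a)>0$, and a suitable normalization of $\Omega^\omega$ lies in $\SSA$, giving *-semisimplicity (Definition~\ref{def1}). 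The same argument applies to $(\B[\|\cdot\|_{\scriptscriptstyle\B}],\Bo)$. Then, because $\A\otimes_{\bar n}\B$ is barrelled, Theorem~\ref{SS}\,$(2)\Rightarrow(1)$ shows that $(\A\otimes_{\bar n}\B,\Ao\otimes\Bo)$ is *-semisimple, and Theorem~\ref{thm_6.6} furnishes a faithful $(\|\cdot\|_{\bar n}$-$\tau_{s^*})$-continuous *-representation $\pi$ of it.

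With $\pi$ in hand, sufficiency follows. Let $c\in(\A\otimes_{\bar n}\B)^+$ with $c\neq 0$. Faithfulness gives $\pi(c)\neq 0$, while positivity of $c$ together with the weak continuity of $\pi$ (note $\tau_{s^*}\geq\tau_w$, and $\ip{\pi(z^*z)\xi}{\xi}=\|\pi(z)\xi\|^2\geq 0$ for $z\in\Ao\otimes\Bo$) forces $\ip{\pi(c)\xi}{\xi}\geq 0$ for every $\xi\in\D_{\scriptscriptstyle\pi}$. Were this zero for all $\xi$, polarization would give $\pi(c)=0$, a contradiction; so there is $\xi_0\in\D_{\scriptscriptstyle\pi}$ with $\ip{\pi(c)\xi_0}{\xi_0}>0$. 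The functional $\omega(\cdot):=\ip{\pi(\cdot)\xi_0}{\xi_0}$ is continuous and representable (by the argument of Proposition~\ref{repr_func}) and satisfies $\omega(c)>0$; hence $\mathcal{R}_c(\A\otimes_{\bar n}\B,\Ao\otimes\Bo)$ is sufficient. Combining the two parts with Definition~\ref{fully_rep} completes the proof.

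The hard part, I expect, is the sufficiency argument. Handling an arbitrary positive element of the tensor product, which is only a $\|\cdot\|_{\bar n}$-limit of sums of squares from $\Ao\otimes\Bo$ rather than an elementary tensor, seems out of reach by a direct tensorial computation; the detour through *-semisimplicity, using that full representability plus (P) upgrades the factors to *-semisimplicity, that Theorem~\ref{SS} transfers this to the tensor product under barrelledness, and that Theorem~\ref{thm_6.6} converts it into a single faithful continuous *-representation, is what makes one uniform positive functional available for all such $c$ at once. By contrast, the domain-equality part is essentially bookkeeping once Lemma~\ref{617} supplies the joint continuity of $\phi_{\Omega}$.
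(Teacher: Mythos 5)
Your proof is correct, and its overall skeleton coincides with the paper's: you upgrade the factors to *-semisimplicity from full representability plus (P) (the paper cites the argument of \cite[Theorem 3.9]{adatra} for this; you spell it out via Remark \ref{rem_217} and the form $\Omega^\omega$ of \eqref{Omega}), you transfer *-semisimplicity to $\A\otimes_{\bar n}\B$ by Theorem \ref{SS} using barrelledness, and you obtain the domain equality $D(\overline{\varphi}_\Omega)=\A\otimes_{\bar n}\B$ exactly as the paper does, from the continuity of $\phi_\Omega$ supplied by Lemmas \ref{615}--\ref{617}. The one genuine divergence is the sufficiency step. The paper passes from ``$(\A\otimes_{\bar n}\B,\Ao\otimes\Bo)$ is *-semisimple'' to ``$\mathcal{R}_c(\A\otimes_{\bar n}\B,\Ao\otimes\Bo)$ is sufficient'' with a bare ``hence''; this implicitly leans on the equivalence of \cite[Theorem 3.9]{adatra}, which is stated there for \emph{Banach} quasi *-algebras, whereas the tensor product here is only a normed one. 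You instead invoke Theorem \ref{thm_6.6} to get a faithful $(\|\cdot\|_{\bar n}$-$\tau_{s^*})$-continuous *-representation $\pi$ and then, for each nonzero positive $c$, produce a vector state $\ip{\pi(\cdot)\xi_0}{\xi_0}$ that is continuous, representable (by the argument of Proposition \ref{repr_func}), and strictly positive at $c$, using weak continuity to propagate positivity from $(\Ao\otimes\Bo)^+$ to its $\|\cdot\|_{\bar n}$-closure and polarization plus faithfulness to find $\xi_0$. This buys a self-contained justification of sufficiency valid in the incomplete normed setting, i.e., it fills in precisely the step the paper glosses over, at the modest cost of the extra detour through Theorem \ref{thm_6.6}.
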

\begin{proof}
Following the same argument as in \cite[Theorem 3.9]{adatra}, we can show that fully representability and condition (P) for $(\A[\|\cdot\|_{\scriptscriptstyle\A}],\Ao)$ and $(\B[\|\cdot\|_{\scriptscriptstyle\B}],\Bo)$ implies their *-semisimplicity. From Theorem \ref{SS}, this gives that $(\A\otimes_{\bar{n}}\B,\Ao\otimes\Bo)$ is *-semisimple. Hence the family $\mathcal{R}_c(\A\otimes_{\bar{n}}\B,\Ao\otimes\Bo)$ is sufficient. We still have to show that $\mathcal{D}(\overline{\varphi}_{\Omega})=\A\otimes_{\bar n}\B$, for every $\Omega\in\mathcal{R}_c(\A\otimes_{\bar{n}}\B,\Ao\otimes\Bo)$; for the definition of $\overline{\varphi}_{\Omega}$, see \eqref{overfi}.

For this aim, consider now the sesquilinear form  $$\phi_{\Omega}:\A\otimes_{\scriptscriptstyle\bar{n}}\B\times\A\otimes_{\scriptscriptstyle\bar {n}}\B\to\mathbb{C}$$ defined in \eqref{fiomega}. Observe that the restriction of $\phi_{\Omega}$ on $\Ao\otimes\Bo \times \Ao\otimes\Bo$ is $\varphi_{\Omega}$ (see discussion after \eqref{P}) and that $\Ao\otimes\Bo$ is dense in $\A\otimes_{\scriptscriptstyle\bar{n}}\B$. Since by Lemma \ref{617}, $\phi_{\Omega}$ is continuous, we conclude that $\phi_{\Omega}=\overline{\varphi}_{\Omega}$ on the whole of $\A\otimes_{\scriptscriptstyle\bar{n}}\B \times \A\otimes_{\scriptscriptstyle\bar{n}}\B$; thus $\D(\overline{\varphi}_{\Omega})=\A\otimes_{\scriptscriptstyle\bar{n}}\B$ and this completes the proof.
\end{proof}
An immediate consequence of Theorem \ref{pr_6.8} and Theorem \ref{pr_6.9} is the following	
\begin{cor} \label{cor-610}Let $(\A[\|\cdot\|_{\scriptscriptstyle\A}],\Ao)$, $(\B[\|\cdot\|_{\scriptscriptstyle\B}],\Bo)$ be Banach quasi *-algebras satisfying condition {\em (P)}. {Consider on $\A\otimes \B$ the projective tensorial topology $\gamma$}. Then the following are equivalent:
	\begin{itemize}
		\item[(1)] both of $(\A[\|\cdot\|_{\scriptscriptstyle\A}],\Ao)$ and $(\B[\|\cdot\|_{\scriptscriptstyle\B}],\Bo)$ are fully representable;
		\item[(2)] the tensor product normed quasi *-algebra $\A\otimes_{\gamma}\B$ is fully representable.
	\end{itemize}
\end{cor}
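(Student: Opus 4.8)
The plan is to derive both implications directly from Theorems \ref{pr_6.8} and \ref{pr_6.9}, the only genuine work being to check that, for the projective cross-norm $\gamma$, every hypothesis of those theorems is automatically in force. First I would record the two facts that make $\gamma$ special here: it is a uniform cross-norm (Remark \ref{unif_lambda}), so that both theorems apply with $\bar{n}=\gamma$; and the normed quasi *-algebra $\A\otimes_{\gamma}\B$ is barrelled. The latter is precisely the observation made in the discussion preceding Proposition \ref{tensor}, namely that since all Banach spaces are barrelled, \cite[Corollary 11.3.8]{perez} yields barrelledness of $\A\otimes_{\scriptscriptstyle\bar{n}}\B$ exactly when $\bar{n}=\gamma$.

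For the implication (2) $\Rightarrow$ (1), I would simply invoke Theorem \ref{pr_6.8}. Assuming $\A\otimes_{\gamma}\B$ is fully representable and $\gamma$ being uniform, that theorem gives at once that both $(\A[\|\cdot\|_{\scriptscriptstyle\A}],\Ao)$ and $(\B[\|\cdot\|_{\scriptscriptstyle\B}],\Bo)$ are fully representable. Notably, condition (P) plays no role in this direction; it is carried along only for the converse.

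For the converse (1) $\Rightarrow$ (2), I would apply Theorem \ref{pr_6.9}. Its three hypotheses are met: both factors are fully representable (this is (1)), both satisfy condition (P) (assumed in the corollary), and $\A\otimes_{\scriptscriptstyle\bar{n}}\B$ is barrelled, which, as recorded above, holds automatically for $\bar{n}=\gamma$. The conclusion of Theorem \ref{pr_6.9} is exactly that $\A\otimes_{\gamma}\B$ is fully representable, establishing (2).

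Since each implication is an immediate application once the hypotheses are aligned, there is no real obstacle to speak of; the single point that must not be overlooked is the automatic barrelledness of $\A\otimes_{\gamma}\B$. It is this feature of the projective topology that permits us to dispense with the explicit barrelledness assumption appearing in Theorem \ref{pr_6.9} and thereby obtain the clean two-way equivalence stated for $\gamma$.
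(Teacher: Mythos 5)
Your proposal is correct and takes essentially the same route as the paper, which states the corollary precisely as an immediate consequence of Theorems \ref{pr_6.8} and \ref{pr_6.9}. The two checks you make explicit --- that $\gamma$ is a uniform cross-norm and that $\A\otimes_{\gamma}\B$ is automatically barrelled (so that condition (P) is needed only for the implication (1) $\Rightarrow$ (2)) --- are exactly the points the paper relies on implicitly via Remark \ref{unif_lambda} and the barrelledness discussion preceding Proposition \ref{tensor}.
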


Note that in all the results of Section 6, the uniform cross-norm $\bar{n}$ can be replaced by the projective cross-norm $\gamma$. The tensor product Banach quasi *-algebra defined in Example \ref{exa3} is, in particular, a tensor product Hilbert quasi *-algebra (see \cite[Theorem 3.3]{adams}); therefore it is automatically *-semisimple and fully representable (see \cite[Theorem 3.9]{adatra}). So after Theorems \ref{SS} and \ref{pr_6.9}, as well Corollary \ref{cor-610}, {\bf one naturally asks} {\em `under which conditions a tensor product Banach quasi *-algebra becomes *-semisimple and fully representable, when its tensor factors have this property, and vice versa'}. Both questions stated in this paper concerning the preceding two concepts do not look easy (see also beginning of Subsection 6.2 amd comments before Theorem \ref{SS}). Thus there is still a lot of work to be done. Since both (topological) tensor products and (topological) quasi *-algebras have applications to quantum dynamics and quantum statistics (for more information, in this aspect,  see \cite{trafra}) it is certainly worth continuing this project.
\medskip

\textbf{Acknowledgments}: This work has been done in the framework of the Project ``Alcuni aspetti di teoria spettrale di operatori e di algebre; frames in spazi di Hilbert rigged'', INDAM-GNAMPA 2018. The first author (M-S.A.) wishes
to thank the Department of Mathematics of National and Kapodistrian University of Athens in Greece, where part of this work has been done. In the final stage of this project, MSA has been supported by the ERC Advanced Grant QUEST ``Quantum Algebraic Structures and Models''.

\end{document}